\newtheoremstyle{thmstyle}
  {\medskipamount}
  {\smallskipamount}
  {\slshape}
  {0pt}
  {\bfseries}
  {.}
  { }
  {\thmname{#1}\thmnumber{ #2}{\normalfont\thmnote{ (#3)}}}
\newtheoremstyle{plainstyle}
  {\medskipamount}
  {\smallskipamount}
  {\rmfamily}
  {0pt}
  {\bfseries}
  {.}
  { }
  {\thmname{#1}\thmnumber{ #2}{\normalfont\thmnote{ (#3)}}}
\theoremstyle{thmstyle}
\newtheorem{theorem}{Theorem}[section]
\newtheorem{lemma}[theorem]{Lemma}
\newtheorem{corollary}[theorem]{Corollary}
\newtheorem{proposition}[theorem]{Proposition}
\theoremstyle{plainstyle}
\newtheorem{definition}[theorem]{Definition}
\newtheorem{conjecture}[theorem]{Conjecture}
\newlist{enumdef}{enumerate}{1}
\setlist[enumdef]{before={\leavevmode}, label={\arabic*.}, ref={\thetheorem.\arabic*}}
\setlist[enumerate]{label={\roman*.}, ref={(\roman*)}} 
\numberwithin{equation}{section} 
\let\epsilon\varepsilon
\newcommand{\df}{\stackrel{\text{def}}{=}}
\newcommand{\comp}{\mathbin{\circ}}
\newcommand{\rest}{\mathord{\vert}}
\newcommand{\up}{\mathord{\uparrow}}
\DeclareMathOperator{\im}{im}
\DeclareMathOperator{\id}{id}
\DeclareMathOperator{\Hom}{Hom}
\DeclareMathOperator{\Aut}{Aut}
\DeclareMathOperator{\Fix}{Fix}
\DeclareMathOperator{\Fold}{Fold}
\DeclareMathOperator{\IndFold}{IndFold}
\DeclareMathOperator{\FoldGroup}{FoldGroup}
\DeclareMathOperator{\Spec}{Spec}
\newcommand{\NN}{\mathbb{N}}
\newcommand{\RR}{\mathbb{R}}
\newcommand{\One}{\mathbbm{1}}
\newcommand{\cC}{\mathcal{C}}
\newcommand{\cD}{\mathcal{D}}
\newcommand{\cF}{\mathcal{F}}
\newcommand{\cM}{\mathcal{M}}
\newcommand{\cR}{\mathcal{R}}
\def\Holder{H\"{o}lder}
\def\Lovasz{Lov\'{a}sz}
\title{Bigraph percolation problems}
\author{%
  Leonardo N.~Coregliano%
}
\date{\today}
\begin{document}
\maketitle

\begin{abstract}
  A bigraph $G$ is weakly norming if the $e(G)$th root of the density of $G$ in $\lvert W\rvert$ is
  a norm in the space of bounded measurable functions $W\colon\Omega\times\Lambda\to\RR$. The only
  known technique, due to Conlon--Lee, to show that a bigraph $G$ is weakly norming is to present a
  cut-percolation sequence of $G$.

  In this paper, we identify a key obstacle for cut-percolation, which we call fold-stability and we
  show that existence of a cut-percolating of a bigraph $G$ is equivalent to non-existence of
  non-monochromatic fold-stable colorings of the edges of $G$.
\end{abstract}

\section{Introduction}

Given a finite bigraph $G$ (i.e., a bipartite graph with a fixed bipartition $(V_1(G),V_2(G))$) and
a bounded measurable function $W\colon\Omega\times\Lambda\to\RR$, where $\Omega=(X,\mu)$ and
$\Lambda=(Y,\nu)$ are probability spaces, the \emph{homomorphism density} of $G$ in $W$, defined as
\begin{equation*}
  t(G,W) \df \int_{X^{V_1(G)}\times Y^{V_2(G)}} \prod_{(u,v)\in E(G)} W(x_u,y_v)\ d(\mu\otimes\nu)(x,y),
\end{equation*}
forms the backbone of the theory of graphons.

Two of the early questions of the field~\cite[\S~14.1]{Lov12} were to characterize which bigraphs
$G$ are \emph{norming} (\emph{weakly norming}, respectively) in the sense that $t(G,W)^{1/e(G)}$
($t(G,\lvert W\rvert)^{1/e(G)}$, respectively) is a norm in the vector space of bounded measurable
functions $\Omega\times\Lambda\to\RR$ (up to zero-measure change).

The first major step on this characterization was taken by Hatami by relating the problem to a
specific \Holder\ property. Namely, every bigraph $G$ naturally defines a homogeneous function
\begin{equation*}
  t(G;(W_e)_{e\in E(G)})
  \df
  \int_{X^{V_1(G)}\times Y^{V_2(G)}} \prod_{(u,v)\in E(G)} W_{(u,v)}(x_u,y_v)\ d(\mu\otimes\nu)(x,y)
\end{equation*}
that takes in a sequence $(W_e)_{e\in E(G)}$ of bounded measurable functions
$\Omega\times\Lambda\to\RR$. Hatami showed that $G$ is norming (weakly norming, respectively) if and
only if for all sequences $(W_e)_{e\in E(G)}$ of bounded measurable functions
$\Omega\times\Lambda\to\RR$ ($\Omega\times\Lambda\to\RR_+$, respectively), we have
\begin{equation}\label{eq:Holder}
  t(G;(W_e)_{e\in E(G)}) \leq \prod_{e\in E(G)} t(G,W_e)^{1/e(G)}.
\end{equation}

Some examples of (weakly) norming bigraphs are the following:
\begin{itemize}
\item Even cycles $C_{2n}$ are norming. The corresponding norms are the Schatten--von-Neumann norms:
  \begin{equation*}
    t(C_{2n},W)^{1/(2n)} = \left(\sum_{\lambda\in\Spec(W)} \lvert\lambda\rvert^{2n}\right)^{1/(2n)},
  \end{equation*}
  where $\Spec(W)$ is the spectrum of the natural compact operator $L_2(\Omega\times\Lambda)\to
  L_2(\Omega\times\Lambda)$ associated to $W$.
\item Complete bigraphs $K_{p,q}$ are weakly norming and norming when both $p$ and $q$ are
  even~\cite[Theorem~2.9(ii)]{Hat10}. When $p=1$, we get the $L_q$-norm over $\Omega$ of the
  $L_1$-norm over $\Lambda$:
  \begin{equation*}
    t(K_{1,q},\lvert W\rvert)^{1/q}
    =
    \left(\int_X\left(\int_Y \lvert W(x,y)\rvert\ d\nu(y)\right)^q\ d\mu(x)\right)^{1/q}.
  \end{equation*}
  When $q=1$, we get the $L_q$-norm over $\Lambda$ of the $L_1$-norm over $\Omega$.

  More generally, when $p,q\geq 2$, we get the ``$L_q$-norm of the common neighborhood of $p$
  vertices in $\Omega$'' (which is the same as the ``$L_p$-norm of the common neighborhood of $q$
  vertices in $\Lambda$'')
  \begin{align*}
    t(K_{p,q},\lvert W\rvert)^{1/(pq)}
    & =
    \left(\int_{X^p}\left(\int_Y \prod_{i=1}^p\lvert W(x_i,y)\rvert\ d\nu(y)\right)^q\ d\mu(x)\right)^{1/(pq)}
    \\
    & =
    \left(\int_{Y^q}\left(\int_X \prod_{i=1}^q\lvert W(x,y_i)\rvert\ d\mu(x)\right)^p\ d\nu(y)\right)^{1/(pq)}.
  \end{align*}
\item A balanced complete bigraph minus a perfect matching $K_{n,n} - n K_{1,1}$ is a weakly norming
  bigraph~\cite[Proposition~14.2]{Lov12}.
\item Hypercubes $Q_n$ are weakly norming~\cite[Theorem~2.9(iii)]{Hat10}.
\end{itemize}

For the particular case of hypercubes $Q_n$, Hatami used a technique that was later generalized and
systematically studied by Conlon--Lee~\cite{CL17} under the name cut-percolation. For this
technique, it is convenient to restate~\eqref{eq:Holder} using a version of homomorphism densities
for colored bigraphs. Namely, let us color the edges of our bigraph $G$ with $c\colon E(G)\to C$ and
for a sequence $W=(W_i)_{i\in C}$ of bounded measurable functions $\Omega\times\Lambda\to\RR$,
define
\begin{equation}\label{eq:tGcW}
  t((G,c),W)
  \df
  \int
  \int_{X^{V_1(G)}\times Y^{V_2(G)}} \prod_{(u,v)\in E(G)} W_{c(u,v)}(x_u,y_v)\ d(\mu\otimes\nu)(x,y).
\end{equation}
Then~\eqref{eq:Holder} can be easily restated as
\begin{equation}\label{eq:Holdercolor}
  t((G,c),W) \leq \prod_{i\in\im(c)} t((G,i),W)^{1/e(G)},
\end{equation}
for every \emph{injective} coloring $c\colon E(G)\to C$ and every sequence $W=(W_i)_{i\in C}$ of bounded measurable
functions $\Omega\times\Lambda\to\RR$, where in the right-hand side, $i$ denotes the constant
coloring that colors every edge of $G$ with the color $i$.

For the remainder of this introduction, we assume $G$ is connected and on the weak norming property,
so we can assume without loss of generality that all our $W_i$ are non-negative.

The first idea to show~\eqref{eq:Holdercolor} is to apply Cauchy--Schwarz Inequality through
``folds'' of the bigraph $G$. Namely, a fold of $G$ is a pair $(f,L)$ such that
\begin{itemize}
\item $f\in\Aut(G)$ is an automorphism of $G$ that is an involution (i.e., $f^{-1} = f$).
\item The set of fixed points $\Fix(f)$ is a vertex-cut of $G$.
\item $L$ is a union of connected components of $G-\Fix(f)$.
\item $(L,\Fix(f),f(L))$ is a partition of $V(G)$.
\end{itemize}
Whenever $(f,L)$ is a fold of $G$, we can nicely split the integral in~\eqref{eq:tGcW} as follows:
\begin{equation*}
  t((G,c),W)
  =
  \int_{X^{V_1(G)\cap\Fix(f)}\times Y^{V_2(G)\cap\Fix(f)}}
  F(x,y)\cdot F_L(x,y)\cdot F_{f(L)}(x,y)
  \ d(\mu\otimes\nu)(x,y)
\end{equation*}
where $F(x,y)$ collects all terms corresponding to edges completely contained in $\Fix(f)$,
$F_L(x,y)$ collects all terms corresponding to edges that have at least one endpoint in $L$ and
$F_{f(L)}(x,y)$ collects all terms corresponding to edges that have at least one endpoint in $f(L)$
(since $\Fix(f)$ is a vertex-cut, there are no edges between $L$ and $f(L)$). By applying
Cauchy--Schwarz Inequality and recalling that $f$ is an automorphism of $G$, we get
\begin{equation}\label{eq:foldCS}
  \begin{aligned}
    t((G,c),W)
    & \leq
    \begin{multlined}[t]
      \left(
      \int_{X^{V_1(G)\cap\Fix(f)}\times Y^{V_2(G)\cap\Fix(f)}}
      F(x,y)\cdot F_L(x,y)^2
      \ d(\mu\otimes\nu)(x,y)\right)^{1/2}
      \\
      \cdot
      \left(
      \int_{X^{V_1(G)\cap\Fix(f)}\times Y^{V_2(G)\cap\Fix(f)}}
      F(x,y)\cdot F_{f(L)}(x,y)^2
      \ d(\mu\otimes\nu)(x,y)\right)^{1/2}
    \end{multlined}
    \\
    & =
    \sqrt{t((G,c\comp f_L),W)\cdot t((G,c\comp f_L^*),W)},
  \end{aligned}
\end{equation}
where $f_L,f_L^*\colon V(G)\to V(G)$ are the \emph{folding maps} given by
\begin{align*}
  f_L(v) & \df
  \begin{dcases*}
    f(v), & if $v\in\Fix(f)\cup f(L)$,\\
    v, & if $v\in L$,
  \end{dcases*}
  &
  f_L^*(v) & \df
  \begin{dcases*}
    f(v), & if $v\in L$,\\
    v, & if $v\in\Fix(f)\cup f(L)$.
  \end{dcases*}
\end{align*}

By repeatedly applying the idea above using different folds, one obtains a chain of inequalities
that is nicely represented by a finite binary tree $\psi$ such that:
\begin{enumerate}
\item\label{it:root} The root of $\psi$ is labeled by the coloring $c$.
\item\label{it:children} For each internal $\eta$ node of $\psi$, there is a fold $(f,L)$ of $G$
  such that if $\eta$ is labeled by a coloring $c'$ of $G$, then its left and right children are
  labeled by $c\comp f_L$ and $c\comp f_L^*$, respectively.
\end{enumerate}

Each such tree $\psi$ yields the inequality
\begin{equation*}
  t((G,c),W) \leq \prod_{\eta\in L(\psi)} t((G,\psi(\eta)),W)^{2^{-\lvert\eta\rvert}},
\end{equation*}
where $L(\psi)$ is the set of leaves of $\psi$, $\psi(\eta)$ is the coloring of $G$ that labels the
node $\eta$ and $\lvert\eta\rvert$ is the height of $\eta$ in the tree $\psi$.

A priori, the goal is to find a tree as above in which all leaves are labeled by monochromatic
colorings. This is not always possible (for example, it is impossible for $C_6$), so instead one
finds a sequence $(\psi_n)_{n\in\NN}$ of trees such that
\begin{equation*}
  M(\psi_n)
  \df
  \sum_{\substack{\eta\in L(\psi_n)\\\psi_n(\eta)\text{ monochromatic}}} 2^{-\lvert\eta\rvert}
\end{equation*}
converges to $1$. Using the Dominated Convergence Theorem and a symmetrization argument, one can
then conclude that~\eqref{eq:Holdercolor} holds.

A key observation of Conlon--Lee~\cite[Theorem~3.2]{CL17} is that the following are equivalent:
\begin{enumerate}[label={\Alph*.}, ref={(\Alph*)}]
\item\label{it:reach} For every coloring $c\colon E(G)\to C$, there exists a coloring tree $\psi$
  satisfying items~\ref{it:root} and~\ref{it:children} such that one of the leaves of $\psi$ is a
  monochromatic coloring.
\item\label{it:lim1} For every coloring $c\colon E(G)\to C$, there exists a sequence of coloring
  trees $(\psi_n)_{n\in\NN}$ with $\psi_{n+1}$ extending $\psi_n$ and satisfying items~\ref{it:root}
  and~\ref{it:children} such that $\lim_{n\to\infty} M(\psi_n)=1$ .
\item\label{it:cutperc} There exists a sequence $E_0,E_1,\ldots,E_n\subseteq E(G)$ such that:
  \begin{itemize}
  \item We have $\lvert E_0\rvert=1$.
  \item For every $i\in[n]$, there exists a fold $(f,L)$ of $G$ such that $E_i=f_L^{-1}(E_{i-1})$.
  \item We have $E_n=E(G)$.
  \end{itemize}
\end{enumerate}
A bigraph $G$ satisfying any (equivalently, all) of the above is called \emph{cut-percolating}.

From item~\ref{it:cutperc} above, it is clear that cut-percolating bigraphs are edge-transitive
under the action of the subgroup $\FoldGroup(G)$ of $\Aut(G)$ generated by $\{f\mid (f,L)\text{ is a
  fold of } G\}$. Conlon--Lee then conjectured that the converse also holds:
\begin{conjecture}[\protect{\cite[Conjecture~6.1]{CL17}}]\label{conj:cutperc}
  If $G$ is a bigraph that is edge-transitive under the action of $\FoldGroup(G)$, then $G$ is
  cut-percolating.
\end{conjecture}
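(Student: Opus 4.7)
The plan is to deduce the conjecture from the paper's main characterization: $G$ is cut-percolating if and only if it admits no non-monochromatic fold-stable coloring of $E(G)$. Granting this characterization, the task reduces to showing that whenever $\FoldGroup(G)$ acts edge-transitively on $E(G)$, every fold-stable coloring $c\colon E(G)\to C$ is monochromatic.

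The natural first step is to verify that fold-stability implies $\FoldGroup(G)$-invariance. For any fold $(f,L)$, the folding map $f_L$ agrees with $f$ on $\Fix(f)\cup f(L)$ and is the identity on $L$, while $f_L^*$ agrees with $f$ on $L$ and is the identity on $\Fix(f)\cup f(L)$. If $c$ is preserved under $c\mapsto c\comp f_L$, then $c(f(e)) = c(e)$ on every edge $e$ with at least one endpoint in $f(L)$; the analogous statement for $f_L^*$ covers the edges with at least one endpoint in $L$. Together these give $c\comp f = c$ for all edges moved by $f$. Since $\FoldGroup(G)$ is generated by the involutions $f$ arising from folds, this would yield $\FoldGroup(G)$-invariance of $c$. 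Edge-transitivity of $\FoldGroup(G)$ would then force the color classes of $c$ to coincide with a single orbit, so $c$ is monochromatic, and the conjecture follows via the characterization.

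The main obstacle is precisely justifying that fold-stability implies $\FoldGroup(G)$-invariance, rather than a strictly weaker property. If fold-stability is instead defined via the Conlon--Lee tree process — e.g., ``no finite sequence of folds reaches a monochromatic coloring'' — then a fold-stable $c$ might evolve non-trivially under folding while merely avoiding collapse to a single color, and the direct argument above breaks. In that case one would have to refine the approach: starting from a non-monochromatic fold-stable $c$, produce a genuinely $\FoldGroup(G)$-invariant non-monochromatic coloring $c'$, for instance by taking the common refinement of all $\FoldGroup(G)$-translates of $c$ and arguing (using the involutive structure of folds) that this refinement remains fold-stable and non-monochromatic. Bridging the combinatorial definition of fold-stability with the group-theoretic orbit structure on $E(G)$ is the heart of the proof; everything else is bookkeeping.
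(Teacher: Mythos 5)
This statement is an open conjecture (Conlon--Lee's Conjecture~6.1), not a theorem of the paper: the paper does not prove it, it only \emph{reformulates} it, via Theorem~\ref{thm:cutperc}, as the assertion that every (symmetrically) fold-stable coloring of an edge-transitive-under-$\FoldGroup(G)$ bigraph is monochromatic. So any complete proof along your lines would be a new result, and your argument does contain a genuine gap precisely at the step the paper identifies as the remaining difficulty. Your reduction to ``every fold-stable coloring is monochromatic'' is consistent with the paper's Theorem~\ref{thm:cutperc} (equivalence of cut-percolation with the non-existence of non-monochromatic fold-stable colorings), but your key step --- that fold-stability forces $c\comp f_L=c$, hence $c\comp f=c$, hence $\FoldGroup(G)$-invariance of $c$ --- rests on a misreading of the definition. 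In the paper, $c$ is fold-stable when for every fold $(f,L)$ the \emph{colored bigraphs} $(G,c)$ and $(G,c\comp f_L)$ are isomorphic; it is not required that $c\comp f_L$ equal $c$, only that some automorphism of $G$ carry one coloring to the other. Under the literal-equality reading the conjecture would indeed be almost trivial (as your computation shows, $c\comp f_L=c$ and $c\comp f_L^*=c$ together give $c\comp f=c$, and edge-transitivity finishes), which is itself a sign that this cannot be the intended notion.

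Your proposed repair --- pass from a non-monochromatic fold-stable $c$ to a genuinely $\FoldGroup(G)$-invariant non-monochromatic coloring $c'$ by taking a common refinement of the $\FoldGroup(G)$-translates of $c$ --- does not obviously work and is not carried out: the refinement of the translates need not be fold-stable (fold-stability is an isomorphism condition and is not preserved by refining against a family of colorings that the folds permute only up to isomorphism), and showing that it remains non-monochromatic is essentially equivalent to the original question. Bridging the isomorphism-type symmetry of fold-stable colorings with the orbit structure of $\FoldGroup(G)$ on $E(G)$ is exactly the content the paper leaves open; its contribution is to show (Corollary~\ref{cor:obst} and Theorem~\ref{thm:cutperc}) that fold-stability can be upgraded for free to strong and, under edge-transitivity, to symmetric fold-stability, which gives extra structure (an ``undoing'' fold $(h,L)$ with $\Fix(h)=\Fix(f)$, and color classes pairwise isomorphic under color permutations) that one might hope to exploit --- but no proof of the conjecture itself.
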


In fact, in~\cite{CL17}, Conlon--Lee proved that reflection bigraphs, which are particular bigraphs
obtained via reflection groups, are cut-percolating and stated the stronger conjecture that all of
the following classes are in fact the same:
\begin{enumerate*}
\item reflection bigraphs,
\item cut-percolating bigraphs,
\item weakly norming bigraphs,
\item bigraphs that are edge-transitive under the action of $\FoldGroup(G)$.
\end{enumerate*}

Toward proving this stronger conjecture, a few other characterizations of weakly norming bigraphs
were obtained in~\cite{KMPW19,LS21,LS22}. Also, significant progress was obtained recently by
Sidorenko~\cite{Sid20}, showing that weakly norming bigraphs are necessarily edge-transitive.

Let us now return to Conjecture~\ref{conj:cutperc} and let us observe that there are three kinds of
obstacles for a bigraph $G$ to be cut-percolating.

First, let us call a coloring $c\colon E(G)\to C$ \emph{fold-stable} if for every fold $(f,L)$ of
$G$, the colored bigraphs $(G,c)$ and $(G,c\comp f_L)$ are isomorphic. A simple induction shows that
if we attempt to construct a coloring tree $\psi$ as in item~\ref{it:reach} starting from $c$ at the
root, then every node $\eta$ of $\psi$ will be labeled by some coloring $\psi(\eta)$ such that
$(G,c)\cong (G,\psi(\eta))$, which in particular means that $\psi(\eta)$ is not monochromatic. Thus,
the first kind of obstacle for $G$ to be cut-percolating is the existence of a non-monochromatic
fold-stable coloring.

Second, let us call a coloring $c\colon E(G)\to C$ \emph{strongly fold-stable} if for every fold
$(f,L)$, there exists an automorphism $h\in\Aut((G,c))$ of the colored bigraph $(G,c)$ such that
$\Fix(h)=\Fix(f)$ and $(h,L)$ is a fold of $G$. Since $h$ also preserves $c$, it is straightforward
to check that $c = c\comp f_L\comp h_L = c\comp f_L^*\comp h_L^*$, that is, whatever a folding map
can do, there is a folding map that undoes it. As the name suggests, we will show later that every
strongly fold-stable coloring is necessarily fold-stable. A priori, existence of a non-monochromatic
strongly fold-stable coloring $c$ is an even worse obstacle than the first as not only all colorings in
a coloring tree starting from $c$ will be isomorphic to $c$, but all foldings that occur on the tree
can be inverted in a single step.

Finally, the last obstacle is more abstract. Let us write $c\preceq_{\Fold(G)} c'$ if we can
construct a coloring tree starting from $c$ and containing $c'$ in one of its nodes. It is easy to
see that $\preceq_{\Fold(G)}$ is a (partial) preorder. Similarly to the situation of fold-stable
colorings, if $c$ is a non-monochromatic $\preceq_{\Fold(G)}$-maximal coloring (i.e., every $c'$
satisfying $c\preceq_{\Fold(G)} c'$ also satisfies $c'\preceq_{\Fold(G)} c$), then in any coloring
tree $\psi$ starting at $c$, all colorings $c'$ that appear in $\psi$ must satisfy
$c'\preceq_{\Fold(G)} c$, hence must also not be monochromatic.

In this paper we show that all these obstacles are in fact the same and their absence completely
characterizes cut-percolation:
\begin{theorem}[(A particular case of) Theorem~\ref{thm:cutperc}]
  The following are equivalent for a finite connected bigraph $G$.
  \begin{enumerate}
  \item $G$ is cut-percolating.
  \item Every fold-stable coloring of $G$ is monochromatic.
  \item Every strongly fold-stable coloring of $G$ is monochromatic.
  \item Every $\preceq_{\Fold(G)}$-maximal coloring of $G$ is monochromatic.
  \item $G$ is edge-transitive under $\FoldGroup(G)$ and every symmetrically fold-stable coloring of
    $G$ is monochromatic.
  \end{enumerate}
\end{theorem}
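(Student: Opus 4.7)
My plan is to establish equivalence among items~1--4 via the implications $1\Rightarrow2\Rightarrow3$, $1\Leftrightarrow4\Rightarrow3$, and to close the cycle with the main technical implication $3\Rightarrow1$; item~5 is handled separately via a symmetrization tailored to the later definition of symmetrically fold-stable.

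The implication $1\Rightarrow2$ is the obstacle argument sketched in the introduction, via induction on tree depth showing that every node in any coloring tree rooted at a fold-stable $c$ is isomorphic to $(G,c)$. For $2\Rightarrow3$ I verify the elementary claim that strongly fold-stable implies fold-stable: given $c$ strongly fold-stable, a fold $(f,L)$, and the witness $h\in\Aut((G,c))$, I define $\sigma\in\Aut(G)$ by $\sigma(v)=v$ on $L\cup\Fix(f)$ and $\sigma(v)=h(f(v))$ on $f(L)$, and a case analysis on edge endpoints (using $\Fix(h)=\Fix(f)$ and color-preservation of $h$) verifies $\sigma\in\Aut(G)$ and $c\circ\sigma=c\circ f_L$. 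The equivalence $1\Leftrightarrow4$ is a finiteness argument: colorings modulo relabelling form a finite set, so every $c$ admits a $\preceq_{\Fold(G)}$-maximal $c'$ with $c\preceq_{\Fold(G)}c'$, and such a $c'$ is monochromatic if and only if $c$ reaches a monochromatic coloring. For $4\Rightarrow3$ I show that strongly fold-stable implies $\preceq_{\Fold(G)}$-maximal: the identity $c=c\circ f_L\circ h_L$ places $c$ as a direct fold-child of $c\circ f_L$ under $(h,L)$, giving $c\circ f_L\preceq_{\Fold(G)}c$; since strong fold-stability is preserved under isomorphism of colored bigraphs (by conjugating the witness, $h\mapsto\tau^{-1}h\tau$) and fold-stability makes direct fold-children of $c$ isomorphic to $c$, those children are themselves strongly fold-stable, and a short induction on tree depth extends the preceq-domination to all descendants.

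The main obstacle is the closing implication $3\Rightarrow1$, equivalently the lemma that every $\preceq_{\Fold(G)}$-maximal coloring is strongly fold-stable. For such a non-monochromatic $c$ and a fold $(f,L)$, preceq-maximality yields a composite fold-map $\Phi$ with $c\circ f_L\circ\Phi=c$, providing a color-preserving endomorphism of $(G,c)$ that is not directly an automorphism. The challenge is to extract from it an involutive $h\in\Aut((G,c))$ with $\Fix(h)=\Fix(f)$ and $(h,L)$ a fold. Since folds weakly decrease the number of color classes, preceq-equivalence forces color-count conservation along the path $c\circ f_L\to\cdots\to c$, which I expect will show that $f_L\circ\Phi$ restricts to a genuine color-preserving automorphism on a canonical ``color-rich'' subgraph, from which $h$ is built by twisting $f$ on $f(L)$ via this restricted automorphism. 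The combinatorial bookkeeping needed to pin down the subgraph and verify that the resulting twist is an involution with the prescribed fix set is where I expect the bulk of the difficulty. Finally, item~5 is handled by a symmetrization argument using edge-transitivity under $\FoldGroup(G)$: from a preceq-maximal coloring one constructs a symmetrically fold-stable one (and conversely) while preserving non-monochromaticity, reducing both implications involving item~5 to what has already been established.
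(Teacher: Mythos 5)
Your handling of the peripheral implications is sound and essentially mirrors the paper: your map $\sigma$ (identity on $L\cup\Fix(f)$, $h\comp f$ on $f(L)$) is the inverse of the isomorphism used in Lemma~\ref{lem:foldstable}\ref{lem:foldstable:strong}; your propagation-plus-fold-back argument for ``strongly fold-stable $\Rightarrow$ maximal'' is Lemma~\ref{lem:foldstable}\ref{lem:foldstable:reachabilitystrong}--\ref{lem:foldstable:maximal}; and your finiteness/rainbow argument for $1\Leftrightarrow 4$ is the paper's route through items~\ref{thm:cutperc:reach} and~\ref{thm:cutperc:maximal} of Theorem~\ref{thm:cutperc}.

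The genuine gap is the closing implication $3\Rightarrow 1$, i.e.\ the claim that every $\preceq_{\Fold(G)}$-maximal coloring is strongly fold-stable. You only sketch a speculative combinatorial plan: from maximality you get a composite folding map $\Phi$ with $c\comp f_L\comp\Phi=c$, and you hope that conservation of the number of color classes lets you restrict $f_L\comp\Phi$ to an automorphism of a ``color-rich'' subgraph and twist $f$ into the required $h$. This single identity is much weaker information than what is actually needed. To build $h$ one must show that the two sides of the fold, $(G,c)\rest_{L\cup\Fix(f)}$ and $(G,c)\rest_{f(L)\cup\Fix(f)}$, are isomorphic \emph{relative to} $\Fix(f)$ (with the identity on $\Fix(f)$), and conservation of $\lvert\im(c)\rvert$ along the path says nothing about how the color classes are distributed across the cut; $f_L\comp\Phi$ is a composition of retractions and is generically far from injective, and nothing in your sketch produces an involution with $\Fix(h)=\Fix(f)$ such that $(h,L)$ is again a fold. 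The paper closes exactly this step analytically: maximality plus the absorption property (Lemma~\ref{lem:maximal}\ref{lem:maximal:absorbing}) and Proposition~\ref{prop:abs} upgrade mere reachability $c\comp f_L\preceq_{\Fold(G)}\{c\}$ to percolation $c\comp f_L\ll_{\Fold(G)}\{c\}$; Lemma~\ref{lem:CSll} then gives $t((G,c\comp f_L),W)\leq t((G,c),W)$ for \emph{every} sequence of bigraphons $W$, which squeezed against Lemma~\ref{lem:CS} forces equality in Cauchy--Schwarz for all $W$; and Proposition~\ref{prop:coreiso} (flag isomorphism from linear dependence of the density functionals, proved by a \Lovasz-style counting argument) converts that equality into the relative isomorphism of the two half-flags, from which $h$ is assembled in Lemma~\ref{lem:foldstable}\ref{lem:foldstable:CSequal->strong}. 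Note in particular that reachability alone does not even give the reverse inequality $t((G,c\comp f_L),W)\leq t((G,c),W)$ --- one genuinely needs the percolation statement, hence Proposition~\ref{prop:abs} --- so your plan is missing not just bookkeeping but the central mechanism. The same missing step also undercuts your treatment of item~5, whose nontrivial half (symmetrically fold-stable obstruction $\Rightarrow$ cut-percolation) additionally requires the symmetrization Lemma~\ref{lem:foldstable}\ref{lem:foldstable:symmfoldstable}, which you do not supply.
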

In the above, a \emph{symmetrically fold-stable} coloring $c\colon E(G)\to C$ is a coloring that is
strongly fold-stable and for every $i,j\in\im(c)$, there exists a permutation $\sigma\colon C\to C$
such that $\sigma(i)=j$ and $(G,c)\cong(G,\sigma\comp c)$ (in particular, all color classes in $c$
yield isomorphic bigraphs).

In particular, a consequence of the theorem above is that Conjecture~\ref{conj:cutperc} can be
rephrased as follows:
\begin{conjecture}
  If $G$ is a bigraph that is edge-transitive under the action of $\FoldGroup(G)$, then every
  symmetrically fold-stable coloring of $G$ is monochromatic.
\end{conjecture}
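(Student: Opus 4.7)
The plan is to close the cycle $(1)\Rightarrow(2)\Rightarrow(3)\Rightarrow(4)\Rightarrow(1)$ and handle $(5)$ via the side equivalence $(3)\iff(5)$.

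The easy directions. For $(1)\Rightarrow(2)$, given a fold-stable $c$ and a coloring tree from $c$ reaching a monochromatic leaf $c_*$, induct on depth using that fold-stability is preserved under colored-bigraph isomorphism (if $c'\cong c$ and $c$ is fold-stable, then so is $c'$, by conjugating the fold-stable witnesses through the isomorphism); this forces every node label to be isomorphic to $c$, so $c\cong c_*$ and $c$ is monochromatic. For $(2)\Rightarrow(3)$, first verify that every strongly fold-stable coloring is fold-stable: given the witness $h$ from strong fold-stability, the map $\sigma\in\Aut(G)$ equal to $\id$ on $L\cup\Fix(f)$ and to $h\comp f$ on $f(L)$ is a graph automorphism preserving the partition $(L,\Fix(f),f(L))$ and satisfying $c\comp\sigma=c\comp f_L$ (verified edge-by-edge using $c\comp h=c$). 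For $(4)\Rightarrow(1)$, argue contrapositively: if $G$ is not cut-percolating, the set of ``stuck'' colorings (those reaching no monochromatic leaf in any tree) is non-empty and upward-closed in $\preceq_{\Fold(G)}$; by finiteness (from $\lvert\im(c')\rvert\leq\lvert\im(c)\rvert$ whenever $c\preceq c'$) it contains a $\preceq$-maximal element $c^*$, necessarily non-monochromatic (monochromatic colorings reach themselves), contradicting $(4)$.

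The heart of the proof is the key lemma: every $\preceq_{\Fold(G)}$-maximal coloring is strongly fold-stable. Given this, $(3)\Rightarrow(4)$ is immediate: a non-monochromatic $\preceq$-maximal coloring would be strongly fold-stable, contradicting $(3)$. To prove the lemma, let $c^*$ be $\preceq$-maximal and $(f,L)$ a fold of $G$; by maximality there is a composition $g$ of folding maps with $c^*\comp f_L\comp g=c^*$, so $\phi\df f_L\comp g$ is a $c^*$-preserving endomorphism of $G$ with image inside $L\cup\Fix(f)$. By finiteness of $G$, some power $\phi^N$ is idempotent, and $\phi$ restricts to a $c^*$-preserving graph automorphism of $G[\im(\phi^N)]$. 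The desired involution $h\in\Aut((G,c^*))$ with $\Fix(h)=\Fix(f)$ and $(h,L)$ a fold should differ from $f$ by an automorphism of $G[L\cup\Fix(f)]$ fixing $\Fix(f)$ pointwise, chosen using $\phi^N$ to ensure color-preservation. The delicate step---and the main obstacle I anticipate---is verifying that such an adjustment of $f$ exists and globally glues into a valid graph involution of $G$; this will require careful case analysis on edges crossing $\Fix(f)$ and subtle use of the idempotency and color-preservation of $\phi^N$.

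Finally, $(3)\iff(5)$. For $(3)\Rightarrow(5)$, the $\FoldGroup(G)$-orbit coloring satisfies $c\comp f_L=c$ pointwise (since $f_L(e)\in\{e,f(e)\}$ lies in the $\FoldGroup(G)$-orbit of $e$) and is strongly fold-stable (take $h=f$ itself), so $(3)$ forces it monochromatic, establishing edge-transitivity under $\FoldGroup(G)$; combined with the inclusion of symmetrically fold-stable colorings in strongly fold-stable ones, this gives $(5)$. Conversely, to derive $(5)\Rightarrow(4)$ (and hence $(5)\Rightarrow(1)$ via the cycle), I would upgrade the key lemma under the edge-transitivity hypothesis: every $\preceq$-maximal coloring is \emph{symmetrically} fold-stable. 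The idea is that edge-transitivity provides $\FoldGroup(G)$-elements moving between color classes, and these combine with the strong fold-stability witnesses to produce the color-permuting automorphisms of $(G,c^*)$ required for symmetric fold-stability; making this adjustment rigorous is the main technical point of this direction.
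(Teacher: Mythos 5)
The statement you were asked to prove is the conjecture itself --- ``if $G$ is $\FoldGroup(G)$-edge-transitive, then every symmetrically fold-stable coloring of $G$ is monochromatic'' --- and the paper does not prove it: it is exactly the rephrasing of Conjecture~\ref{conj:cutperc} (Conlon--Lee), which remains open, and the paper presents it as a consequence of Theorem~\ref{thm:cutperc} only in the sense of being an equivalent reformulation of the open problem. What your proposal sketches is instead the equivalence theorem (the particular case of Theorem~\ref{thm:cutperc}), i.e., the cycle through conditions (1)--(5). Even if every step of that cycle were carried out correctly, it would not yield the conjecture: condition (5) is the \emph{conjunction} ``$G$ is $\FoldGroup(G)$-edge-transitive \emph{and} every symmetrically fold-stable coloring is monochromatic'', so the equivalences give you nothing when you are handed only the first conjunct as a hypothesis. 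Deriving the second conjunct from the first is precisely the open problem, and nowhere in your proposal is the edge-transitivity hypothesis used to exclude a non-monochromatic symmetrically fold-stable coloring, which is what a proof of the stated conjecture would have to do.

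As a secondary remark, if your target had been Theorem~\ref{thm:cutperc} itself, your outline is broadly parallel to the paper's, but your ``key lemma'' (every $\preceq_{\Fold(G)}$-maximal coloring is strongly fold-stable) is proved in the paper by an analytic route rather than the combinatorial idempotent-power/gluing argument you sketch and yourself flag as incomplete: maximality yields $c\comp f_L\ll_{\Fold(G)}\{c\}$ and $c\comp f_L^*\ll_{\Fold(G)}\{c\}$, Lemmas~\ref{lem:CS} and~\ref{lem:CSll} then force equality in the Cauchy--Schwarz step for every bigraphon sequence $W$, and Proposition~\ref{prop:coreiso} converts that equality into a flag isomorphism from which the required involution $h$ with $\Fix(h)=\Fix(f)$ is assembled (Lemma~\ref{lem:foldstable}\ref{lem:foldstable:CSequal->strong}); it is not clear that your purely combinatorial adjustment of $f$ can be made to work. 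So even read as a proof of the theorem the central step is missing, and read as a proof of the conjecture the argument does not address the statement at all.
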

The hope would be to use the strong symmetry properties of symmetrically fold-stable colorings to
attack the conjecture above (or to disprove it by presenting a bigraph $G$ that is edge-transitive
under the action of $\FoldGroup(G)$ along with a non-monochromatic fold-stable coloring).

\subsection{More general cut-percolation}

Even though the main focus during the development of the current work was
Conjecture~\ref{conj:cutperc}, its arguments extend nicely to other related problems involving folds
of bigraphs. Three examples that are worth mentioning are the following:
\begin{enumerate}[label={\arabic*.}, wide]
\item When studying norming bigraphs (as opposed to weakly norming ones), the same arguments of
  cut-percolation hold using instead folds whose corresponding cut-involution's fixed set is an
  independent set of the bigraph~\cite{CL17}.
\item In~\cite{CL24}, the more general problem of \emph{domination between bigraphs} is
  studied. Namely, a bigraph $G_1$ dominates $G_2$ if $t(G_1,W)^{1/e(G_1)}\geq t(G_2,W)^{1/e(G_2)}$
  for every bounded measurable function $W\colon\Omega\times\Lambda\to\RR_+$ (e.g., Sidorenko's
  Conjecture amounts every bigraph dominating the edge bigraph). Two main operations are used to
  obtain domination between bigraphs in~\cite{CL24}: fold inequalities as in~\eqref{eq:foldCS} and a
  relocation operation (which we will not cover here).
\item In~\cite{Cor24}, with the aim of studying Sidorenko's Conjecture, the author observed that a
  left-sided analogue of cut-percolation can be used to obtain induced-Sidorenko bigraphs, which
  together with the results of~\cite{CR21} can obtain several more Sidorenko bigraphs than one
  usually obtains from the analogous techniques that use cut-percolation instead. In
  left-cut-percolation, we start with a colored bigraph $H=(G,c)$, restrict ourselves to folds of
  $G$ whose involution preserves the coloring $c$ and in analogy to item~\ref{it:cutperc}, we would
  like to find a sequence $V_0,V_1,\ldots,V_n\subseteq V_1(G)$ such that:
  \begin{itemize}
  \item We have $\lvert V_0\rvert=1$.
  \item For every $i\in[n]$, there exists a fold $(f,L)\in\cF$ such that $V_i=f_L^{-1}(V_{i-1})$.
  \item We have $V_n=V_1(G)$.
  \end{itemize}

  A similar theory is then developed to show that the above are equivalent to items analogous
  to~\ref{it:reach} and~\ref{it:lim1}.
\end{enumerate}

To study problems such as the above, we can extend the mentioned notions as follows:
\begin{enumerate}[label={\arabic*.}]
\item We define the refined notion of $(K,\cF)$-fold-stability for a subgroup $K$ of $\Aut(G)$ and a
  subset $\cF$ of folds by restricting the original notion to apply only to folds in $\cF$ and require the
  automorphism between $(G,c)$ and $(G,c\comp f_L)$ to be in $K$.
\item Similarly, we define the refined notion of $\cF'$-strongly $(K,\cF)$-fold-stability by
  requiring $(K,\cF)$-fold-stability and requiring the ``inverse fold'' $(h,L)$ to be in $\cF'$.
\item For a set $\cF$ of folds of $G$, a set $\cM$ of ``objective'' colorings of $G$ and a coloring
  $c$, we write $c\preceq_\cF\cM$ if there exists a coloring tree satisfying items~\ref{it:root}
  and~\ref{it:children} using only folds in $\cF$ and containing some coloring in $\cM$ as labeling
  one of its nodes, that is, we can ``reach'' $\cM$ from $c$ using folds in $\cF$.

  We write $c\ll_\cF\cM$ if we can find a sequence of trees $\psi_n$ satisfying items~\ref{it:root}
  and~\ref{it:children} using only folds in $\cF$ such that for
  \begin{equation*}
    M(\psi_n)
    \df
    \sum_{\substack{\eta\in L(\psi_n)\\\psi_n(\eta)\in\cM}} 2^{-\lvert\eta\rvert}
  \end{equation*}
  we have $\lim_{n\to\infty} M(\psi_n) = 1$. That is, starting from $c$, we can ``percolate coloring
  trees''\footnote{This is an a priori different notion of ``percolation''.} to $\cM$
\end{enumerate}

In this paper, we cover sufficient conditions for equivalence of $c\preceq_\cF\cM$ and $c\ll_\cF\cM$
(Proposition~\ref{prop:abs}) and conditions for equivalence between the notions of fold-stability,
strong fold-stability and maximality (Corollary~\ref{cor:obst}). This allows us to characterize
versions of cut-percolation (and left-cut-percolation) in which only a subset of folds can be used
in terms of the corresponding aforementioned three obstacles (Theorems~\ref{thm:cutperc}
and~\ref{thm:leftcutperc}).

\subsection{The key ideas of the argument}

Recall that the motivation to study cut-percolation is that it reveals inequalities between
different $t((G,c),W)$. The key idea behind Theorem~\ref{thm:cutperc} is to leverage these
inequalities to understand combinatorial properties of colorings.

As an example, let us give a high-level idea of how we show that if $G$ is not cut-percolating, then
there must exist a strongly fold-stable coloring that is not monochromatic. From the description of
item~\ref{it:reach}, there exists a coloring $c$ that cannot reach any monochromatic coloring via a
coloring tree (i.e., $c\not\preceq_{\Fold(G)}\cM$ for the set $\cM$ of monochromatic colorings of
$G$).

Since the bigraph is finite, the set of colorings reachable from $c$ is finite, so it must contain
some $\preceq_{\Fold(G)}$-maximal coloring $c'$. In turn, maximality of $c'$ implies that in every
coloring tree we construct starting from $c'$, all nodes can reach back to $c'$; in particular, for
every fold $(f,L)$, not only must have $c'\comp f_L\preceq_{\Fold(G)} c'$ and $c'\comp
f_L^*\preceq_{\Fold(G)} c'$, but in fact, we must have $c'\comp f_L\ll_{\Fold(G)} \{c'\}$ and
$c'\comp f_L^*\ll_{\Fold(G)}\{c'\}$. If we recall the inequalities corresponding to these
coloring trees, we get
\begin{equation*}
  t((G,c'),W)
  \leq
  \sqrt{t((G,c'\comp f_L),W)\cdot t((G,c'\comp f_L^*),W)}
  \leq
  t((G,c'),W),
\end{equation*}
so we must have equality throughout. Thus, since the first inequality in the above corresponds to an
application of Cauchy--Schwarz, equality in the above implies some kind of linear dependence between
certain functionals. This linear dependence will allow us to show (Proposition~\ref{prop:coreiso})
isomorphism between certain partially labeled colored bigraphs (i.e., \emph{flags} in the language
of flag algebras~\cite{Raz07}), which will eventually lead to (strong) fold-stability of $c'$ (which
is necessarily \emph{not} monochromatic, as it is reachable from $c$).

\subsection{Organization}

Section~\ref{sec:defs} contains all definitions and notation used in the
paper. Section~\ref{sec:basicprops} contains basic properties about folds that follow almost
immediately from definitions (but will be needed for later arguments). Section~\ref{sec:flagiso} is
devoted to showing Proposition~\ref{prop:coreiso}, which provides an isomorphism between flags when
the corresponding density functionals are linearly dependent. Section~\ref{sec:reachvsperc} is
devoted to showing a sufficient condition (\emph{absorption}) for the equivalence of
$c\preceq_\cF\cM$ and $c\ll_\cF\cM$ (Proposition~\ref{prop:abs}). Section~\ref{sec:ineqsperc}
collects all basic relations between folds of bigraphs and inequalities that they induce on bounded
measurable functions. In Section~\ref{sec:foldstability}, we establish all implications between the
notions of fold-stability, strong fold-stability and $\preceq_\cF$-maximality needed for the final
theorems, which are proved in Section~\ref{sec:cutperc}. Finally, the flag isomorphism obtained in
Proposition~\ref{prop:coreiso} is only between what we call ``connected cores'' of the flags;
Appendix~\ref{sec:coreiso} shows how to upgrade this to a full flag isomorphism if the underlying
bigraphs are isomorphic (this is not used anywhere in the argument, but nicely complements
Proposition~\ref{prop:coreiso}).

\section{Basic definitions and notation}
\label{sec:defs}

In this section, we establish notation and provide all definitions used in the paper. These are
grouped thematically and are ordered in descending order of (probable) familiarity to a
combinatorics reader.

We denote the set non-negative integers by $\NN\df\{0,1,\ldots\}$, the set of positive integers by
$\NN_+$ and the set of non-negative reals by $\RR_+$. For $n\in\NN$, we let
$[n]\df\{1,2,\ldots,n\}$.

\begin{definition}[Bigraphs]
  \begin{enumdef}
  \item A \emph{bigraph} is a tuple $G=(V_1,V_2,E)$ where $V_1$ and $V_2$ are disjoint sets and
    $E\subseteq V_1\times V_2$. We use the shorthand notations:
    \begin{align*}
      V_1(G) & \df V_1, & V_2(G) & \df V_2, & E(G)\df E,\\
      v_1(G) & \df \lvert V_1\rvert, & v_2(G) & \df \lvert V_2\rvert, & e(G)\df \lvert E\rvert.
    \end{align*}
    We also let $V(G)\df V_1(G)\cup V_2(G)$ and $v(G)\df\lvert V(G)\rvert$.

    We say that $G$ is \emph{finite} if $V(G)$ is finite.

  \item A \emph{bigraph homomorphism} from a bigraph $G_1$ to a bigraph $G_2$ is a function $f\colon
    V(G_1)\to V(G_2)$ such that
    \begin{align*}
      f(V_1(G_1)) & \subseteq V_1(G_2), &
      f(V_2(G_1)) & \subseteq V_2(G_2), &
      f(E(G_1)) & \subseteq E(G_2),
    \end{align*}
    where in the last one, we make a small abuse of notation, by denoting by $f$ the product
    function $V_1(G_1)\times V_2(G_2)\to V_1(G_2)\times V_2(G_2)$ that acts as $f$ in both
    coordinates. The set of bigraph homomorphisms from $G_1$ to $G_2$ is denoted $\Hom(G_1,G_2)$.

  \item A \emph{bigraph embedding} is a bigraph homomorphism $f$ that is injective and further
    satisfies
    \begin{equation*}
      f((V_1(G_1)\times V_2(G_1))\setminus E(G_1)) \subseteq (V_1(G_2)\times V_2(G_2))\setminus E(G_2).
    \end{equation*}

    A \emph{bigraph isomorphism} is a bijective bigraph embedding. Two bigraphs $G_1$ and $G_2$
    are \emph{isomorphic} (denoted $G_1\cong G_2$) if there exists a bigraph isomorphism from $G_1$
    to $G_2$.

    An \emph{automorphism} of a bigraph $G$ is an isomorphism from $G$ to $G$. We denote the
    \emph{group of automorphisms} of $G$ by $\Aut(G)$.

  \item If $G$ is a bigraph and $K$ is a subgroup of $\Aut(G)$, we say that $G$ is
    \begin{description}
    \item[$K$-edge-transitive] if the induced action of $K$ on $E(G)$ is
      transitive, i.e., for every $e_1,e_2\in E(G)$, there exists $h\in K$ with $h(e_1)=e_2$.
    \item[$K$-left-vertex-transitive] if the induced action of $K$ on $V_1(G)$
      is transitive, i.e., for every $u,v\in V_1(G)$, there exists $h\in K$ with $h(v_1)=v_2$.
    \end{description}
    We omit $K$ from the notation when $K=\Aut(G)$.

  \item Let $G$ be a bigraph and $S\subseteq V(G)$.

    The \emph{bigraph induced by $S$ in $G$} is
    \begin{equation*}
      G\rest_S \df (V_1(G)\cap S, V_2(G)\cap S, E(G)\cap (S\times S)).
    \end{equation*}

    We also define $G - S\df G\rest_{V(G)\setminus S}$.

    For $E\subseteq E(G)$, we let
    \begin{equation*}
      G - E \df (V_1(G),V_2(G),E(G)\setminus E)
    \end{equation*}
    be the bigraph obtained from $G$ by removing the edges in $E$.

  \item Let $G$ be a bigraph and $v\in V(G)$. The \emph{neighborhood} of $v$ in $G$ is
    \begin{equation*}
      N_G(v) \df
      \begin{dcases*}
        \{u\in V_2(G) \mid (v,u)\in E(G)\}, & if $v\in V_1(G)$,\\
        \{u\in V_1(G) \mid (u,v)\in E(G)\}, & if $v\in V_2(G)$.
      \end{dcases*}
    \end{equation*}

  \item Let $G$ be a bigraph.

    A \emph{path} in $G$ from $u\in V(G)$ to $v\in V(G)$ is a finite sequence $w_0,\ldots,w_n$ in
    $V(G)$ such that $w_0=u$, $w_n=v$ and for every $i\in[n]$, we have $(w_{i-1},w_i)\in E(G)$ or
    $(w_i,w_{i-1})\in E(G)$.

    A set $S\subseteq V(G)$ is \emph{connected} in $G$ if for every $u,v\in S$, there exists a path
    in $G$ from $u$ to $v$.

    A \emph{connected component} of $G$ is a maximal connected non-empty set in $G$, that is, it is
    a non-empty subset of $V(G)$ that is connected in $G$ and is not properly contained in any other
    set that is connected in $G$.

    We say that $G$ is \emph{connected} if $V(G)$ is connected in $G$. We say that $G$ is
    \emph{disconnected} if $G$ is not connected.

    A \emph{(vertex-)cut} in $G$ is a set $S\subseteq V(G)$ such that $G-S$ is disconnected.

    A vertex $v\in V(G)$ is \emph{isolated} in $G$ if $\{v\}$ is a connected component of $G$.

    A set $S\subseteq V(G)$ is an \emph{independent set} of $G$ if it does not contain any edge of
    $G$, that is, if $E(G)\cap(S\times S) = \varnothing$.
  \end{enumdef}
\end{definition}

\begin{definition}[Colored bigraphs]
  \begin{enumdef}
  \item A \emph{coloring} of a bigraph $G$ is a function $c\colon E(G)\to C$, where $C$ is any
    set. We use the shorthand notation $C_c\df C$.

    A \emph{colored bigraph} is a pair $H=(G,c)$, where $G$ is a bigraph and $c$ is a coloring of
    $G$. We use the shorthand notations:
    \begin{align*}
      G(H) & \df G, & c_H & \df c, & C_H & \df C_c,\\
      V_1(H) & \df V_1(G), & V_2(H) & \df V_2(G), & E(H) & \df E(G),\\
      v_1(H) & \df v_1(G), & v_2(H) & \df v_2(G), & e(H) & \df e(G),\\
      V(H) & \df V(G), & v(H) & \df v(G).
    \end{align*}

    We say that $H$ is \emph{finite} if $G(H)$ is finite.

  \item We say that a coloring $c$ of a bigraph $G$ is:
    \begin{description}
    \item[monochromatic] if $\lvert\im(c)\rvert\leq 1$.
    \item[rainbow] if $c$ is injective.
    \end{description}
    By extension, we say that a colored bigraph $H$ is monochromatic (rainbow, resp.) if its
    coloring $c_H$ is so.

  \item A \emph{colored bigraph homomorphism} from a colored bigraph $H_1$ to a colored bigraph
    $H_2$ is a bigraph homomorphism $f$ from $G(H_1)$ to $G(H_2)$ that further \emph{preserves
    colors} in the sense that $c_{H_2}\comp f = c_{H_1}$ (note that this in particular forces
    $C_{H_1}=C_{H_2}$). The set of colored bigraph homomorphisms from $H_1$ to $H_2$ is denoted
    $\Hom(H_1,H_2)$.

  \item A \emph{colored bigraph embedding} is a bigraph embedding that is a colored bigraph
    homomorphism.

    A \emph{colored bigraph isomorphism} is a bijective colored bigraph embedding. Two colored
    bigraphs $H_1$ and $H_2$ are \emph{isomorphic} (denoted $H_1\cong H_2$) if there exists an
    isomorphism from $H_1$ to $H_2$.

    An \emph{automorphism} of a colored bigraph $H$ is an isomorphism from $H$ to $H$. We denote the
    group of automorphisms of $H$ by $\Aut(H)$.

  \item If $H$ is a colored bigraph and $K$ is a subgroup of $\Aut(H)$, we say that $H$ is
    \emph{$K$-edge-transitive} (\emph{$K$-left-vertex-transitive}, respectively) when $G(H)$ is
    so. We omit $K$ from the notation when $K=\Aut(H)$.

  \item Let $H$ be a colored bigraph.

    For $S\subseteq V(H)$, the \emph{colored bigraph induced by $S$ in $H$} is
    \begin{equation*}
      H\rest_S \df (G(H)\rest_S, c_H\rest_{E(G(H)\rest_S)})
    \end{equation*}
    We also define $H - S\df H\rest_{V(H)\setminus S}$.

    For $E\subseteq E(H)$, we let
    \begin{equation*}
      H - E \df (G(H)-E, c_H\rest_{E(G)\setminus E})
    \end{equation*}
    be the colored bigraph obtained from $H$ by removing the edges in $E$.

    For $D\subseteq C_H$, we let
    \begin{equation*}
      H_D \df ((V_1(H), V_2(H), c_H^{-1}(D)), c_H\rest_{c^{-1}(D)})
    \end{equation*}
    be the colored bigraph obtained from $H$ by keeping only the edges whose color is in $D$.

    For $i\in C_H$, we let $H_i\df G(H_{\{i\}})$ be the bigraph obtained from $H$ by keeping only
    the edges whose color is $i$ (and then forgetting the color).
  \end{enumdef}
\end{definition}

For the next definition, we will borrow a bit of the terminology from the theory of flag
algebras~\cite{Raz07}.

\begin{definition}[Flags]
  \begin{enumdef}
  \item A \emph{flag} is a pair $F = (H,\theta)$, where $H$ is a colored bigraph and
    $\theta\colon[k]\to V(H)$ is an injection for some $k\in\NN$. We use the shorthand notations:
    \begin{align*}
      H(F) & \df H, & \theta_F & \df \theta, & k_F & \df k,\\
      T(F) & \df \im(\theta), & T_1(F) & \df T(F)\cap V_1(H), & T_2(F) & \df T(F)\cap V_2(H),\\
      G(F) & \df G(H), & c_F & \df c_H, & C_F & \df C_H,\\
      V_1(F) & \df V_1(H), & V_2(F) & \df V_2(H), & E(F) & \df E(H),\\
      v_1(F) & \df v_1(H), & v_2(F) & \df v_2(H), & e(F) & \df e(H),\\
      V(F) & \df V(H), & v(F) & \df v(H).
    \end{align*}

    We say that $F$ is \emph{finite} if $H(F)$ is finite.

  \item A \emph{flag homomorphism} from a flag $F_1$ to a flag $F_2$ is a colored bigraph
    homomorphism $f$ from $H(F_1)$ to $H(F_2)$ that further \emph{preserves partial labeling} in the
    sense that $f\comp\theta_{F_1} = \theta_{F_2}$ (note that this in particular forces
    $k_{F_1}=k_{F_2}$). The set of flag homomorphisms from $F_1$ to $F_2$ is denoted
    $\Hom(F_1,F_2)$.

  \item A \emph{flag embedding} is a colored bigraph embedding that is a flag homomorphism.

    A \emph{flag isomorphism} is a bijective flag embedding. Two flags $F_1$ and $F_2$ are
    \emph{isomorphic} (denoted $F_1\cong F_2$) if there exists a flag isomorphism from $F_1$ to
    $F_2$.

    An \emph{automorphism} of flag $F$ is an isomorphism from $F$ to $F$. We denote the
    group of automorphisms of $F$ by $\Aut(F)$.

  \item We say that two flags $F_1$ and $F_2$ \emph{have the same type} if $k_{F_1}=k_{F_2}$ and
    $F_1\rest_{T(F_1)}\cong F_2\rest_{T(F_2)}$ (note that if this is the case, then the isomorphism
    must be given by $\theta_{F_2}\comp\theta_{F_1}^{-1}$ and we must have $C_{F_1}=C_{F_2}$).

  \item Let $F$ be a flag and $S\subseteq V(F)$.

    If $S\supseteq T(F)$, then the \emph{flag induced by $S$ in $F$} is
    \begin{equation*}
      F\rest_S \df (H(F)\rest_S, \theta')
    \end{equation*}
    where $\theta'\colon [k_F]\to S$ is the function obtained from $\theta$ by restricting its
    codomain to $S$.

    If $S\cap T(F)=\varnothing$, then we let $F-S\df F\rest_{V(F)\setminus S}$.

    For $E\subseteq E(F)$, we let
    \begin{equation*}
      F - E \df (H(F)-E, \theta)
    \end{equation*}
    be the flag obtained from $F$ by removing the edges in $E$.

  \item The \emph{connected core} $C(F)$ of a flag $F$ is the union of all connected components of
    $G(F)$ that contain at least one vertex of $T(F)$.

    We say that a flag $F$ is \emph{core-connected} if $C(F)=V(F)$.
  \end{enumdef}
\end{definition}

\begin{definition}[Bigraphons]
  \begin{enumdef}
  \item Given probability spaces $\Omega=(X,\mu)$ and $\Lambda=(Y,\nu)$, a \emph{bigraphon} over
    $\Omega$ and $\Lambda$ is a bounded measurable function $W\colon X\times Y\to\RR_+$, where
    $X\times Y$ is equipped with the product $\sigma$-algebra and the product measure
    $\mu\otimes\nu$; we will denote such a bigraphon by $W\colon\Omega\times\Lambda\to\RR_+$.

    If further $V$ is a finite set, we let $\Omega^V\df(X^V,\mu^V)$ be the product probability space
    equipped with the product measure $\mu^V$. By abuse of notation, we denote $\mu^V$ simply by
    $\mu$.

  \item Given a finite flag $F$ and a sequence $W = (W_i)_{i\in C_F}$ of bigraphons
    $W_i\colon\Omega\times\Lambda\to\RR_+$ (all over the same spaces), we define the
    \emph{homomorphism density function} $t(F,W)\colon X^{T_1(F)}\times Y^{T_2(F)}\to\RR_+$ by
    \begin{equation*}
      t(F,W)(x,y)
      \df
      \int_{X^{V_1(F)\setminus T_1(F)}\times Y^{V_2(F)\setminus T_2(F)}}
      \prod_{(u,v)\in E(F)} W_{c(u,v)}(x''_u,y''_v)
      \ d(\mu\otimes\nu)(x',y'),
    \end{equation*}
    where
    \begin{align*}
      x''_u & \df
      \begin{dcases*}
        x_u, & if $u\in T_1(F)$,\\
        x'_u, & if $u\in V_1(F)\setminus T_1(F)$,
      \end{dcases*}
      &
      y''_v & \df
      \begin{dcases*}
        y_v, & if $v\in T_2(F)$,\\
        y'_v, & if $v\in V_2(F)\setminus T_2(F)$.
      \end{dcases*}
    \end{align*}

    Given further a finite colored bigraph $H$, the \emph{homomorphism density} of $H$ in $W$ is the
    unique point $t(H,W)\in\RR_+$ in the image of $t((H,\varnothing),W)$ (where $\varnothing$
    denotes the empty function $[0]\to V(H)$), that is, we have
    \begin{equation*}
      t(H,W)
      \df
      \int_{X^{V_1(H)}\times Y^{V_2(H)}}
      \prod_{(u,v)\in E(H)} W_{c(u,v)}(x_u,y_v)
      \ d(\mu\otimes\nu)(x,y).
    \end{equation*}

    Finally, for a finite bigraph $G$ and a bigraphon $W\colon\Omega\times\Lambda\to\RR_+$, the
    \emph{homomorphism density} of $G$ in $W$ is defined as $t(G,W)\df t((G,1),(W)_{i\in [1]})$,
    where $1$ denotes the unique function $E(G)\to[1]$, that is, we have
    \begin{equation*}
      t(G,W)
      \df
      \int_{X^{V_1(G)}\times Y^{V_2(G)}}
      \prod_{(u,v)\in E(G)} W(x_u,y_v)
      \ d(\mu\otimes\nu)(x,y).
    \end{equation*}

  \item Given a finite bigraph $G$ with $v_1(G),v_2(G) > 0$, the \emph{natural bigraphon
    corresponding to $G$} is the bigraphon $W^G\colon V_1(G)\times V_2(G)\to\RR_+$, where $V_1(G)$
    and $V_2(G)$ are equipped with the uniform probability measures and
    \begin{equation*}
      W^G(u,v) \df \One[(u,v)\in E(G)].
    \end{equation*}
    Clearly, if $G'$ is a finite bigraph, then
    \begin{equation*}
      t(G',W^G) = \frac{\lvert\Hom(G',G)\rvert}{v_1(G)^{v_1(G')}\cdot v_2(G)^{v_2(G')}}.
    \end{equation*}

    Given a finite colored bigraph $H$ with $v_1(H),v_2(H) > 0$, the \emph{natural bigraphon
      sequence corresponding to $H$} is the sequence $W^H\df(W^{H_i})_{i\in C_H}$ (that is, the
    $i$th bigraphon is the natural bigraphon of the bigraph $H_i$ consisting only of the edges of
    color $i$). Clearly, if $F$ is a finite flag and $\theta\colon[k_F]\to V(H)$ is injective, then
    \begin{equation}\label{eq:tFWH}
      t(F,W^H)(x,y)
      =
      \frac{
        \lvert\Hom(F,(H,\theta))\rvert
      }{
        v_1(H)^{v_1(F)-\lvert T_1(F)\rvert}\cdot v_2(H)^{v_2(F)-\lvert T_2(F)\rvert}
      },
    \end{equation}
    where
    \begin{align*}
      x_u & \df (\theta\comp\theta_F^{-1})(u), &
      y_v & \df (\theta\comp\theta_F^{-1})(v).
    \end{align*}
  \end{enumdef}
\end{definition}

\begin{definition}[Binary trees]
  \begin{enumdef}
  \item The set $[2]^{<\omega}$ of finite strings over $[2]$ is defined as the union
    $[2]^{<\omega}\df\bigcup_{n\in\NN}[2]^{[n]}$. The \emph{length} of $\eta\in [2]^{<\omega}$ is
    denoted $\lvert\eta\rvert$ (i.e., $\lvert\eta\rvert$ is the unique $n\in\NN$ such that
    $\eta\in [2]^{[n]}$).

    We denote the \emph{prefix partial order} on $[2]^{<\omega}$ by $\preceq$, that is, we have
    \begin{equation*}
      \eta\preceq\tau
      \iff
      \lvert\eta\rvert\leq\lvert\tau\rvert
      \land
      \tau\rest_{[\lvert\eta\rvert]}=\eta.
    \end{equation*}

  \item A \emph{binary tree} is a set $T\subseteq [2]^{<\omega}$ that is $\preceq$-downward closed
    (i.e., if $\eta\in T$ and $\tau\in [2]^{<\omega}$ is such that $\tau\preceq\eta$, then
    $\tau\in T$).

    A \emph{leaf} of a binary tree $T$ is a $\preceq$-maximal element of $T$ (i.e., an element
    $\eta\in T$ such that if $\tau\in T$ is such that $\eta\preceq\tau$, then $\eta=\tau$). An
    \emph{internal node} of $T$ is an element of $T$ that is not a leaf of $T$. The set of leaves of
    $T$ is denoted $L(T)$.

    The \emph{height} of a binary tree $T$ is defined as
    \begin{equation*}
      h(T)\df \sup\{\lvert\eta\rvert \mid \eta\in T\}.
    \end{equation*}
    (Note that $h(T)\in\NN\cup\{-\infty,\infty\}$.)

  \item For a binary tree $T\subseteq [2]^{<\omega}$ and $n\in\NN$, the \emph{$n$-restriction of
    $T$} is the binary tree
    \begin{equation*}
      T\rest_n \df T\cap\bigcup_{m=0}^n [2]^{[n]}.
    \end{equation*}
    Clearly $h(T\rest_n)=\min\{h(T),n\}$.

    The \emph{$1$-extension of $T$} is the tree
    \begin{equation*}
      T\up
      \df
      T\up^1
      \df
      \{\varnothing\}\cup\{\eta\in [2]^{<\omega} \mid \eta\rest_{[\lvert\eta\rvert-1]}\in T\}
    \end{equation*}
    obtained from $T$ by extending it to one more level. (Note that $\varnothing\up =
    \{\varnothing\}$, i.e., the $1$-extension of the empty tree is the tree of height $0$.)

  \item For a finite non-empty tree $T$, the \emph{Cantor probability measure} of $T$ is the
    probability measure $\mu_T$ on $L(T)$, equipped with the discrete $\sigma$-algebra, given by
    \begin{align*}
      \mu_T(\{\eta\}) = 2^{-\lvert\eta\rvert}
    \end{align*}
    for every $\eta\in L(T)$.
  \end{enumdef}
\end{definition}

\begin{definition}[Cut-involutions and folds]
  Let $G$ be a bigraph.
  \begin{enumdef}
  \item A \emph{cut-involution} of $G$ is an automorphism $f$ of $G$ such that:
    \begin{enumerate}
    \item $f$ is an \emph{involution}, that is, $f\comp f = \id_{V(G)}$.
    \item The set
      \begin{equation*}
        \Fix(f) \df \{v\in V(G) \mid f(v) = v\}
      \end{equation*}
      of points that are fixed by $f$ is a cut in $G$.
    \end{enumerate}

    The cut-involution $f$ is further called \emph{independent}\footnote{In~\cite{CL17}, these were
      called \emph{stable involutions}, but to avoid conflict with the notion of fold-stability
      defined later, we use the name independent cut-involution instead.} if $\Fix(f)$ is also an
    independent set of $G$.

  \item A \emph{fold} of $G$ is a pair $(f,L)$ such that:
    \begin{enumerate}
    \item $f$ is a cut-involution of $G$.
    \item $L$ is a union of connected components of $G-\Fix(f)$.
    \item $(L,\Fix(f),f(L))$ is a partition of $V(G)$.
    \end{enumerate}
    The fold $(f,L)$ is further called \emph{independent} if $f$ is further an independent
    cut-involution of $G$.

    The set of folds of $G$ is denoted $\Fold(G)$ and the set of independent folds of $G$ is denoted
    $\IndFold(G)$.

    Note that if $(f,L)\in\Fold(G)$ is a fold of $G$, then so is $(f,f(L))$; this is called the
    \emph{dual fold} of $(f,L)$ (clearly, the dual fold of the dual fold of $(f,L)$ is $(f,L)$
    itself).

    Given a fold $(f,L)\in\Fold(G)$, the \emph{left-folding map} and the \emph{right-folding map} of
    $(f,L)$ are the endomorphisms $f_L,f_L^*\colon V(G)\to V(G)$ of $G$, respectively, defined by
    \begin{align*}
      f_L(v) & \df
      \begin{dcases*}
        f(v), & if $v\in\Fix(f)\cup f(L)$,\\
        v, & if $v\in L$,
      \end{dcases*}
      &
      f_L^*(v) & \df
      \begin{dcases*}
        f(v), & if $v\in L$,\\
        v, & if $v\in\Fix(f)\cup f(L)$.
      \end{dcases*}
    \end{align*}
    Clearly, $f_{f(L)}=f^*_L$ and $f_{f(L)}^*=f_L$.

  \item A \emph{fold} of a colored bigraph $H$ is a fold $(f,L)$ of $G(H)$ such that $f$ is an
    automorphism of $H$. The set of folds of $H$ is denoted $\Fold(H)$

  \item A \emph{coloring tree} of $G$ by colors in $C$ is a function $\psi\colon T\to C^{E(G)}$ from
    a binary tree $T$ to the set of colorings of $G$ by colors in $C$.

    If $T$ is finite and non-empty, then the Cantor probability measure of $\psi$ is the probability
    measure $\mu_\psi$ on $C^{E(G)}$, equipped with the discrete $\sigma$-algebra that is the
    pushforward of $\mu_T$ by $\psi\rest_{L(T)}$, that is, we have
    \begin{equation*}
      \mu_\psi(\{c\})
      \df
      (\psi\rest_{L(T)})_*(\mu_T)(\{c\})
      \df
      \mu_T(\psi\rest_{L(T)}^{-1}(c))
      =
      \sum_{\substack{\eta\in L(T)\\\psi(\eta)=c}} \mu_T(\{\eta\})
      =
      \sum_{\substack{\eta\in L(T)\\\psi(\eta)=c}} 2^{-\lvert\eta\rvert}.
    \end{equation*}

    Given further a sequence $W=(W_i)_{i\in C}$ of bigraphons $W_i\colon\Omega\times\Lambda\to\RR_+$,
    the \emph{homomorphism density} of $\psi$ on $W$ is
    \begin{equation*}
      t(\psi,W)
      \df
      \prod_{c\in C^{E(G)}} t((G,c),W)^{\mu_\psi(\{c\})}.
    \end{equation*}
    Note that the product above has only finitely many terms that are not equal to
    $1$.

  \item A \emph{folding tree of $G$} is a map $\phi\colon T\to\Fold(G)$ from a binary tree $T$ to
    the set of folds of $G$. If $\phi(\eta)=(f,L)$, then we let
    \begin{align*}
      \phi_\eta & \df f_L, &
      \phi_\eta^* & \df f_L^*
    \end{align*}
    be the left-folding and right-folding maps corresponding to the fold $\phi(\eta)=(f,L)$.

    Given $n\in\NN$, the \emph{$n$-restriction} of $\phi$ is $\phi\rest_n\df\phi\rest_{T\rest_n}$.

    Given further a coloring $c\colon E(G)\to C$ of $G$, the \emph{coloring tree induced by $\phi$
    on $c$} is the function $\phi[c]\colon T\up\to C^{E(G)}$  from the $1$-extension of $T$ to the
    set of colorings of $G$ into colors in $C$ given recursively by
    \begin{equation*}
      \phi[c](\eta) \df 
      \begin{dcases*}
        c, & if $\eta = \varnothing$,
        \\
        \phi[c](\eta\rest_{[n-1]})\comp\phi_{\eta\rest_{[n-1]}},
        & if $\lvert\eta\rvert=n>0$ and $\eta_n=1$,
        \\
        \phi[c](\eta\rest_{[n-1]})\comp\phi_{\eta\rest_{[n-1]}}^*,
        & if $\lvert\eta\rvert=n>0$ and $\eta_n=2$,
      \end{dcases*}
    \end{equation*}

  \item Given a set $C$ of colors and a set $\cF\subseteq\Fold(G)$ of folds of $G$, we define the
    (partial) preorder $\preceq_{\cF}$ on $C^{E(G)}$ by letting $c\preceq_{\cF} c'$ when there
    exists a folding tree $\phi$ of $G$ such that $\im(\phi)\subseteq\cF$ and
    $c'\in\im(\phi[c])$. Equivalently, when there exist $t\in\NN$ and folds
    $(f_1,L_1),\ldots,(f_t,L_t)$ such that for every $i\in[t]$, either $(f_i,L_i)$ or its dual
    $(f_i,f_i(L_i))$ is in $\cF$ and
    \begin{equation*}
      c' = c\comp (f_1)_{L_1}\comp\cdots\comp (f_t)_{L_t}.
    \end{equation*}

    When $c\preceq_{\cF} c'$, we say that $c'$ is \emph{$\cF$-reachable} from $c$. Clearly the
    notion of $\cF$-reachability does not change if we close $\cF$ under dual folds.

    An element $c\in C^{E(G)}$ is called \emph{$\preceq_{\cF}$-maximal} if for every $c'\in
    C^{E(G)}$, if $c\preceq_{\cF} c'$, then $c'\preceq_{\cF} c$.

  \item A subset $\cC\subseteq C^{E(G)}$ is called \emph{$\preceq_{\cF}$-strongly connected} if for
    every $c,c'\in\cC$, we have $c\preceq_{\cF} c'$.

    A \emph{$\preceq_{\cF}$-strongly connected component} of $C^{E(G)}$ is a maximal
    $\preceq_{\cF}$-strongly connected non-empty subset of $C^{E(G)}$, that is, a non-empty subset
    of $C^{E(G)}$ that is $\preceq_{\cF}$-strongly connected and is not properly contained in any
    other $\preceq_{\cF}$-strongly connected subset of $C^{E(G)}$.
  \end{enumdef}
\end{definition}

\begin{definition}[Folding problems]
  \begin{enumdef}
  \item A \emph{folding problem} is a tuple $(G,c,\cC,\cF)$, where:
    \begin{itemize}
    \item $G$ is a bigraph, called the \emph{base bigraph}.
    \item $c$ is a coloring of $G$, called the \emph{initial coloring}.
    \item $\cC\subseteq C_c^{E(G)}$ is a set of colorings of $G$ by $C_c$, called the \emph{set of
      objective colorings}.
    \item $\cF\subseteq\Fold(G)$ is a set of folds of $G$, called the \emph{set of allowed folds}.
    \end{itemize}

    We say that $(G,c,\cC,\cF)$ is \emph{finite} if $G$ is finite.

  \item We say that a folding problem $(G,c,\cC,\cF)$ is a \emph{positive reachability instance}
    if there exists $c'\in\cC$ such that $c\preceq_{\cF} c'$. We abbreviate this as $c\preceq_{\cF}\cC$.

    We say $(G,c,\cC,\cF)$ is a \emph{negative reachability instance} when it is not a positive
    reachability instance. We abbreviate this as $c\not\preceq_{\cF}\cC$.

    Clearly, these notions do not change if we close $\cF$ under dual folds.

  \item We say that a folding problem $(G,c,\cC,\cF)$ is a \emph{positive percolation instance} if
    there exists a sequence of folding trees $(\phi_n)_{n\in\NN}$ of $G$ with $\phi_n\colon
    T_n\to\Fold(G)$ such that:
    \begin{enumerate}
    \item For every $n\in\NN$, we have $\im(\phi_n)\subseteq\cF$.
    \item For every $n\in\NN$, we have $T_n\subseteq T_{n+1}$ and $\phi_{n+1}\rest_{T_n}=\phi_n$.
    \item We have
      \begin{equation*}
        \lim_{n\to\infty} \mu_{\phi_n[c]}(\cC) = 1.
      \end{equation*}
    \end{enumerate}
    We abbreviate this as $c\ll_{\cF}\cC$ and we say that such a sequence $(\phi_n)_{n\in\NN}$
    \emph{witnesses} $c\ll_{\cF}\cC$.

    We say that $(G,c,\cC,\cF)$ is a \emph{negative percolation instance} when it is not a positive
    percolation instance. We abbreviate this as $c\not\ll_{\cF}\cC$.

    Clearly, these notions do not change if we close $\cF$ under dual folds.
    
  \item We say that a folding problem $(G,c,\cC,\cF)$ is \emph{absorbing} if for every $c'\in
    C_c^{E(G)}$ with $c\preceq_{\cF} c'$ and every fold $(f,L)\in\cF$, if $c'\preceq_{\cF}\cC$ and
    $c'\notin\cC$, then $c'\comp f_L\preceq_{\cF}\cC$ and $c'\comp f_L^*\preceq_{\cF}\cC$.

  \item Let $G$ be a bigraph and $\cF\subseteq\Fold(G)$ be a set of folds of $G$. We say that $G$ is
    \emph{$\cF$-cut-percolating} if there exists a finite sequence $E_0,E_1,\ldots,E_n\subseteq
    E(G)$ such that:
    \begin{enumerate}
    \item We have $\lvert E_0\rvert=1$.
    \item For every $i\in[n]$, there exists a fold $(f,L)\in\cF$ such that $E_i=f_L^{-1}(E_{i-1})$.
    \item We have $E_n=E(G)$.
    \end{enumerate}
    We drop $\cF$ from the notation when $\cF=\Fold(G)$.

    The bigraph $G$ is called \emph{independently cut-percolating} if it is
    $\IndFold(G)$-cut-percolating.

  \item Let $H$ be a colored bigraph and $\cF\subseteq\Fold(H)$ be a set of folds of $H$. We say
    that $H$ is \emph{$\cF$-left-cut-percolating} if there exists a finite sequence
    $V_0,V_1,\ldots,V_n\subseteq V_1(H)$ such that:
    \begin{enumerate}
    \item We have $\lvert V_0\rvert=1$.
    \item For every $i\in[n]$, there exists a fold $(f,L)\in\cF$ such that $V_i=f_L^{-1}(V_{i-1})$.
    \item We have $V_n=V_1(H)$.
    \end{enumerate}
    We drop $\cF$ from the notation when $\cF=\Fold(H)$.
  \end{enumdef}
\end{definition}

\begin{definition}[Fold-stability]
  Let $G$ be a bigraph, let $c\colon E(G)\to C$ be a coloring of $G$, let
  $\cF,\cF'\subseteq\Fold(G)$ be sets of folds of $G$ and let $K$ be a subgroup of the automorphism
  group $\Aut(G)$ of $G$. We say that $c$ is:
  \begin{description}
  \item[$(K,\cF)$-fold-stable] if for every fold $(f,L)\in\cF$, there exists $g\in K$ that is an
    isomorphism from $(G,c)$ to $(G,c\comp f_L)$.
  \item[$\cF'$-strongly $(K,\cF)$-fold-stable] if it is $(K,\cF)$-fold-stable\footnote{Let us point
    out that in Lemma~\ref{lem:foldstable}\ref{lem:foldstable:strong}, we will show that the
    existence of $h$ as required by the second part of the definition immediately implies
    $(\Aut(G),\cF)$-fold-stability.} and for every fold $(f,L)\in\cF$ there exists an automorphism
    $h\in\Aut((G,c))$ of the colored bigraph $(G,c)$ such that $\Fix(h)=\Fix(f)$ and $(h,L)\in\cF'$.
  \item[$\cF'$-symmetrically $(K,\cF)$-fold-stable] if it is $\cF'$-strongly $(K,\cF)$-fold-stable
    and for every $i,j\in\im(c)$, there exist $g\in K$ and a permutation $\sigma\colon C_c\to C_c$
    such that $\sigma(i)=j$ and $g$ is an isomorphism from $(G,c)$ to $(G,\sigma\comp c)$.
  \end{description}

  In all of the above, we drop $\cF$ or $\cF'$ from the notation when they are equal to $\Fold(G)$
  and we drop $K$ from the notation when $K=\Aut(G)$.
\end{definition}

\begin{definition}[Left-sided notions]
  Let $G$ be a bigraph.
  \begin{enumdef}
  \item A \emph{left-coloring} of $G$ is a function $\ell\colon V_1(G)\to C$, where $C$ is any
    set. We use the shorthand notation $C_\ell\df C$. We say that $\ell$ is:
    \begin{description}
    \item[monochromatic] if $\lvert\im(\ell)\rvert\leq 1$.
    \item[rainbow] if $\ell$ is injective.
    \end{description}

  \item If $\ell\colon V_1(G)\to C$ is a left-coloring of $G$ and $c\colon E(G)\to C'$ is a coloring
    of $G$, then we define the coloring $\ell\otimes c\colon E(G)\to C\times C'$ by
    \begin{equation*}
      (\ell\otimes c)(u,v) \df (\ell(u), c(u,v)).
    \end{equation*}

    It is worth noting that folding maps respect this product in the following sense: if
    $(f,L)\in\Fold(G)$, then
    \begin{align*}
      (\ell\otimes c)\comp f_L & = (\ell\comp f_L)\otimes(c\comp f_L), &
      (\ell\otimes c)\comp f_L^* & = (\ell\comp f_L^*)\otimes(c\comp f_L^*).
    \end{align*}
    In particular, if $(f,L)\in\Fold((G,c))$ is a fold of the colored bigraph $(G,c)$, then the
    above simplifies to
    \begin{align*}
      (\ell\otimes c)\comp f_L & = (\ell\comp f_L)\otimes c, &
      (\ell\otimes c)\comp f_L^* & = (\ell\comp f_L^*)\otimes c.
    \end{align*}

  \item For a left-coloring $\ell\colon V_1(G)\to C$ of $G$, a coloring $c\colon E(G)\to C'$ of $G$,
    sets of folds $\cF,\cF'\subseteq\Fold(G)$ of $G$ and a subgroup $K$ of $\Aut(G)$, we say that
    $\ell\otimes c$ is \emph{$\cF'$-left-symmetrically $(K,\cF)$-fold-stable} if it is
    $\cF'$-strongly $(K,\cF)$-fold-stable and for every $i,j\in\im(\ell)$, there exist $g\in K$ and
    a permutation $\sigma\colon C\to C$ such that $\sigma(i)=j$ and $g$ is an isomorphism from
    $(G,\ell\otimes c)$ to $(G,(\sigma\comp\ell)\otimes c)$.

    We drop $\cF$ from the notation when $\cF=\Fold(G)$ and we drop $K$ from the notation when
    $K=\Aut(G)$.
  \end{enumdef}
\end{definition}

\section{Basic properties of folds}
\label{sec:basicprops}

In this section, we prove properties about folds that follow almost immediately from definitions,
but are nevertheless needed for later results. The first two results have previously appeared in the
literature, but since their proofs are short, we include them here.

\begin{lemma}[Conlon--Lee~\cite{CL17}]\label{lem:Kedgetransitive}
  Let $G$ be a bigraph, let $\cF\subseteq\Fold(G)$ be a set of folds of $G$ and let $K$ be the
  subgroup of $\Aut(G)$ generated by $\{f\mid (f,L)\in\cF\}$. If $G$ is $\cF$-cut-percolating, then
  $G$ is $K$-edge-transitive.
\end{lemma}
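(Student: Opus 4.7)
The plan is to argue by induction along the cut-percolation sequence that every edge of $G$ lies in the $K$-orbit of the single edge of $E_0$. Let $E_0,E_1,\ldots,E_n$ be a sequence witnessing $\cF$-cut-percolation, write $e_0$ for the unique element of $E_0$, and for each $i\in[n]$ pick a fold $(f^{(i)},L^{(i)})\in\cF$ with $E_i = (f^{(i)})_{L^{(i)}}^{-1}(E_{i-1})$.

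The inductive claim is that every edge of $E_i$ lies in the $K$-orbit of $e_0$. The base case is trivial. For the inductive step, take $e\in E_i$ and let $(f,L)=(f^{(i)},L^{(i)})$. By definition $f_L(e)\in E_{i-1}$, so by the inductive hypothesis there is some $k'\in K$ with $k'(f_L(e))=e_0$. It therefore suffices to show that $f_L(e)$ and $e$ differ by an element of $K$, in fact by either $\id$ or $f$. Here one uses the key structural fact: since $L$ and $f(L)$ are each unions of connected components of $G-\Fix(f)$, no edge of $G$ joins $L$ to $f(L)$. Hence the two endpoints of $e$ lie either entirely in $L\cup\Fix(f)$ (on which $f_L$ acts as the identity, as $f$ fixes $\Fix(f)$) or entirely in $f(L)\cup\Fix(f)$ (on which $f_L$ acts as $f$). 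In the first case $f_L(e)=e$, and in the second $f_L(e)=f(e)$, so $e=f(f_L(e))$ since $f$ is an involution. Either way $e=h(f_L(e))$ for some $h\in\{\id,f\}\subseteq K$, and composing with $k'$ yields an element of $K$ sending $e_0$ to $e$.

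Applying the claim at $i=n$ gives $E(G)=E_n\subseteq K\cdot e_0$, which together with $K\le\Aut(G)$ implies $K$ acts transitively on $E(G)$, i.e., $G$ is $K$-edge-transitive. There is essentially no obstacle: the only nontrivial observation is that edges of $G$ respect the partition $(L,\Fix(f),f(L))$ in the sense above, which is immediate from the component-based definition of a fold.
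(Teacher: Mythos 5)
Your proof is correct and follows essentially the same route as the paper's: both send each edge back to $e_0$ along the percolation sequence and observe that each folding map acts on any given edge either as the identity or as the involution $f\in K$, so the composite is realized by an element of $K$. Your inductive phrasing merely makes explicit the step the paper leaves implicit when it extracts the subsequence $f_{i_1^e},\ldots,f_{i_{t_e}^e}$.
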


\begin{proof}
  Since $G$ is $\cF$-cut-percolating, there exist a finite sequence $E_0,E_1,\ldots,E_n\subseteq
  E(G)$ and a sequence of folds $(f_1,L_1),\ldots,(f_n,L_n)\in\cF$ such that $\lvert E_0\rvert=1$,
  $E_n=E(G)$ and $E_i=(f_i)_{L_i}^{-1}(E_{i-1})$ for every $i\in[n]$.

  Let $e_0$ be the unique element of $E_0$ and note that since for every $e\in E(G)$, we have
  \begin{equation*}
    e_0 = ((f_1)_{L_1}\comp(f_2)_{L_2}\comp\cdots\comp (f_n)_{L_n})(e),
  \end{equation*}
  there must exist indices $i_1^e < \cdots < i_{t_e}^e$ such that
  \begin{equation*}
    e_0 = (f_{i_1^e}\comp f_{i_2^e}\comp\cdots\comp f_{i_{t_e}^e})(e),
  \end{equation*}
  from which we conclude that $G$ is $K$-edge-transitive.
\end{proof}

\begin{lemma}[Coregliano~\cite{Cor24}]\label{lem:Kleftvertextransitive}
  Let $H$ be a colored bigraph, let $\cF\subseteq\Fold(H)$ be a set of folds of $H$ and let $K$ be
  the subgroup of $\Aut(H)$ generated by $\{f\mid (f,L)\in\cF\}$. If $H$ is
  $\cF$-left-cut-percolating, then $H$ is $K$-left-vertex-transitive.
\end{lemma}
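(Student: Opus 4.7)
The plan is to adapt the proof of Lemma~\ref{lem:Kedgetransitive} almost verbatim, with left-vertices of $H$ playing the role of edges. Since $H$ is $\cF$-left-cut-percolating, unpacking the definition yields a finite sequence $V_0,V_1,\ldots,V_n\subseteq V_1(H)$ with $\lvert V_0\rvert=1$ and $V_n=V_1(H)$, together with folds $(f_1,L_1),\ldots,(f_n,L_n)\in\cF$ such that $V_i=(f_i)_{L_i}^{-1}(V_{i-1})$ for every $i\in[n]$. Let $v_0$ denote the unique element of $V_0$.

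Unwinding the recursion, for every $v\in V_1(H)$ we obtain
\begin{equation*}
  v_0 = \bigl((f_1)_{L_1}\comp(f_2)_{L_2}\comp\cdots\comp(f_n)_{L_n}\bigr)(v).
\end{equation*}
Because each folding map $(f_i)_{L_i}$ acts on any individual vertex either as the involution $f_i$ (when that vertex lies in $\Fix(f_i)\cup f_i(L_i)$) or as the identity (when it lies in $L_i$), we may extract the subsequence of indices $i_1^v<i_2^v<\cdots<i_{t_v}^v$ at which $f_i$ was actually applied along the trajectory of $v$, obtaining
\begin{equation*}
  v_0 = \bigl(f_{i_1^v}\comp f_{i_2^v}\comp\cdots\comp f_{i_{t_v}^v}\bigr)(v).
\end{equation*}
The right-hand side is an element of $K$ sending $v$ to $v_0$, so every element of $V_1(H)$ lies in the $K$-orbit of $v_0$ and hence $H$ is $K$-left-vertex-transitive.

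No serious obstacle is expected; the only detail to check is that each $f_i$ really does map $V_1(H)$ to itself, which is automatic since $f_i\in\Aut(H)$ preserves the bipartition, so that the composition above is a well-defined element of $K$ acting on $V_1(H)$.
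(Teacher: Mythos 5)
Your proposal is correct and follows essentially the same argument as the paper: unwind the left-cut-percolation sequence to get $v_0 = ((f_1)_{L_1}\comp\cdots\comp(f_n)_{L_n})(v)$ for every $v\in V_1(H)$, then extract the subsequence of indices where the folding map acts as $f_i$ rather than the identity to obtain an element of $K$ carrying $v$ to $v_0$. This matches the paper's proof (which is itself the direct analogue of Lemma~\ref{lem:Kedgetransitive}), so there is nothing to add.
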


\begin{proof}
  Since $H$ is $\cF$-left-cut-percolating, there exist a finite sequence $V_0,V_1,\ldots,V_n\subseteq
  V_1(H)$ and a sequence of folds $(f_1,L_1),\ldots,(f_n,L_n)\in\cF$ such that $\lvert V_0\rvert=1$,
  $V_n=V_1(H)$ and $V_i=(f_i)_{L_i}^{-1}(V_{i-1})$ for every $i\in[n]$.

  Let $v_0$ be the unique element of $V_0$ and note that since for every $v\in V_1(H)$, we have
  \begin{equation*}
    v_0 = ((f_1)_{L_1}\comp(f_2)_{L_2}\comp\cdots\comp (f_n)_{L_n})(v),
  \end{equation*}
  there must exist indices $i_1^v < \cdots < i_{t_v}^v$ such that
  \begin{equation*}
    v_0 = (f_{i_1^v}\comp f_{i_2^v}\comp\cdots\comp f_{i_{t_v}^v})(v),
  \end{equation*}
  from which we conclude that $H$ is $K$-left-vertex-transitive.
\end{proof}

\begin{lemma}\label{lem:action}
  If $(f,L)\in\Fold(G)$ is a fold of a bigraph $G$ and $h\in\Aut(G)$ is an automorphism of $G$, then
  $(h\comp f\comp h^{-1}, h(L))$ is a fold of $G$ with
  \begin{align*}
    \Fix(h\comp f\comp h^{-1}) & = h(\Fix(f)), &
    (h\comp f\comp h^{-1})_{h(L)} & = h\comp f_L\comp h^{-1}, &
    (h\comp f\comp h^{-1})_{h(L)}^* & = h\comp f_L^*\comp h^{-1}.
  \end{align*}

  In particular, this yields a left action of $\Aut(G)$ on $\Fold(G)$ given by
  \begin{equation}\label{eq:AutGFoldG}
    h\cdot (f,L) \df (h\comp f\comp h^{-1}, h(L))
  \end{equation}
  for every $h\in\Aut(G)$ and every $(f,L)\in\Fold(G)$.
\end{lemma}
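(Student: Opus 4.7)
The plan is to verify each of the four stated properties by unpacking the definitions, then check the two action axioms. First, I would show that $h\comp f\comp h^{-1}$ is a cut-involution: it is an automorphism of $G$ as a composition of automorphisms, and $(h\comp f\comp h^{-1})\comp(h\comp f\comp h^{-1}) = h\comp f\comp f\comp h^{-1} = \id_{V(G)}$ since $f$ is an involution. The fixed set identity $\Fix(h\comp f\comp h^{-1}) = h(\Fix(f))$ follows from the chain of equivalences $h(f(h^{-1}(v)))=v \iff f(h^{-1}(v))=h^{-1}(v) \iff h^{-1}(v)\in\Fix(f)$. Because $h$ is a bigraph automorphism, it maps vertex-cuts to vertex-cuts, so $h(\Fix(f))$ is a cut.

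Next, I would check the partition and component conditions. Since $h$ restricts to a bigraph isomorphism $G-\Fix(f)\to G-h(\Fix(f))$, it sends connected components to connected components, so $h(L)$ is a union of connected components of $G-h(\Fix(f))$. The three sets $(h(L),h(\Fix(f)),h(f(L)))$ partition $V(G)$ because $h$ is a bijection on $V(G)$ and $(L,\Fix(f),f(L))$ does; moreover $h(f(L))=(h\comp f\comp h^{-1})(h(L))$, confirming that the third block is the image of the first under the new involution. Together, this establishes that $(h\comp f\comp h^{-1}, h(L))\in\Fold(G)$.

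For the folding-map identities, I would evaluate both sides on $v\in V(G)$ according to which block of the partition contains it. If $v\in h(L)$, then $(h\comp f\comp h^{-1})_{h(L)}(v)=v$, while $(h\comp f_L\comp h^{-1})(v) = h(f_L(h^{-1}(v))) = h(h^{-1}(v)) = v$ since $h^{-1}(v)\in L$. If $v\in h(\Fix(f))\cup h(f(L))$, then both sides equal $(h\comp f\comp h^{-1})(v)$, directly from the definitions of $f_L$ and of the left-folding map of the conjugated fold. The dual identity for the right-folding map is verified symmetrically by swapping the roles of $L$ and $f(L)$.

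Finally, the action axioms are routine: $\id\cdot(f,L)=(\id\comp f\comp\id^{-1},\id(L))=(f,L)$, and for $h_1,h_2\in\Aut(G)$ both $(h_1 h_2)\comp f\comp(h_1 h_2)^{-1} = h_1\comp(h_2\comp f\comp h_2^{-1})\comp h_1^{-1}$ and $(h_1 h_2)(L)=h_1(h_2(L))$ hold, so $(h_1 h_2)\cdot(f,L) = h_1\cdot(h_2\cdot(f,L))$. I do not anticipate any genuine obstacle here; the lemma is essentially the statement that conjugation by an automorphism transports all the combinatorial data attached to a fold, and every step reduces to a direct substitution. The mild point worth being careful about is making sure the three blocks of the new partition are identified with the images of the correct blocks of the old one, which is where the identity $(h\comp f\comp h^{-1})(h(L))=h(f(L))$ enters.
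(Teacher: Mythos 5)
Your proposal is correct and follows essentially the same route as the paper: conjugation by $h$ transports the involution, fixed set, component structure, and partition, giving the fold conditions, after which the folding-map identities and the action axioms are direct substitutions. You simply spell out in full the case analyses and axiom checks that the paper dismisses as straightforward.
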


\begin{proof}
  It is straightforward to check that $h\comp f\comp h^{-1}$ is an involution of $G$ and
  $\Fix(h\comp f\comp h^{-1}) = h(\Fix(f))$. Since $L$ is a union of connected components of
  $G-\Fix(f)$, it follows that $h(L)$ is a union of connected components of $G-h(\Fix(f)) =
  G-\Fix(h\comp f\comp h^{-1})$. Finally, since $(L,\Fix(f),f(L))$ is a partition of $V(G)$, it
  follows that
  \begin{equation*}
    (h(L),\Fix(h\comp f\comp h^{-1}),(h\comp f\comp h^{-1})(h(L)))
    =
    (h(L),h(\Fix(f)),h(f(L)))
  \end{equation*}
  is also a partition of $V(G)$. Thus, $(h\comp f\comp h^{-1}, h(L))\in\Fold(G)$.

  Finally, it is straightforward to check that~\eqref{eq:AutGFoldG} indeed is an action of $\Aut(G)$
  on $\Fold(G)$.
\end{proof}

\begin{lemma}\label{lem:indfold}
  The set $\IndFold(G)$ of independent folds of a bigraph $G$ is $\Aut(G)$-invariant.
\end{lemma}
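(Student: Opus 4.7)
The plan is a direct combination of Lemma~\ref{lem:action} with the elementary observation that bigraph automorphisms preserve independent sets.

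Concretely, let $(f,L) \in \IndFold(G)$ and $h \in \Aut(G)$. By Lemma~\ref{lem:action}, $h \cdot (f,L) = (h \comp f \comp h^{-1}, h(L))$ is already known to be a fold of $G$, and its cut-involution $h \comp f \comp h^{-1}$ has fixed-point set $h(\Fix(f))$. So it only remains to verify that $h(\Fix(f))$ is an independent set of $G$.

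This is immediate: since $f$ is independent, $\Fix(f)$ is an independent set, i.e.\ $E(G) \cap (\Fix(f) \times \Fix(f)) = \varnothing$. Applying the automorphism $h$ (which preserves both $E(G)$ and non-edges by definition of bigraph isomorphism) gives $E(G) \cap (h(\Fix(f)) \times h(\Fix(f))) = \varnothing$, so $h(\Fix(f))$ is also independent. Hence $h \cdot (f,L) \in \IndFold(G)$, establishing $\Aut(G)$-invariance.

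There is no real obstacle here; the lemma is essentially bookkeeping, packaging the content of Lemma~\ref{lem:action} with the trivial fact that independence is preserved under automorphisms, so that the restriction of the action~\eqref{eq:AutGFoldG} from $\Fold(G)$ to $\IndFold(G)$ is well-defined. The only thing to be mildly careful about is invoking the bigraph-embedding clause (which explicitly requires non-edges to map to non-edges) rather than just the homomorphism property, so that independence of $\Fix(f)$ indeed transfers to $h(\Fix(f))$.
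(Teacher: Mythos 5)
Your proof is correct and is essentially identical to the paper's: both invoke Lemma~\ref{lem:action} to see that $h\cdot(f,L)$ is a fold with $\Fix(h\comp f\comp h^{-1})=h(\Fix(f))$, and then note that independence of $\Fix(f)$ transfers to $h(\Fix(f))$ because $h$ is an automorphism. Your extra remark about using the non-edge-preservation clause (equivalently, one could apply the homomorphism property of $h^{-1}$) is a fine, if minor, elaboration of the same argument.
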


\begin{proof}
  Follows since for every $h\in\Aut(G)$ and every independent fold $(f,L)\in\IndFold(G)$ of $G$, by
  Lemma~\ref{lem:action}, the fold $(f',L')\df h\cdot (f,L)$ satisfies $\Fix(f') = h(\Fix(f))$,
  which is an independent set of $G$ as $\Fix(f)$ is so and $h$ is an automorphism of $G$.
\end{proof}

\begin{lemma}\label{lem:dualfold}
  Let $G$ be a bigraph, let $\cF\subseteq\Fold(G)$ be a set of folds of $G$ and let $K$ be the
  subgroup of $\Aut(G)$ generated by $\{f\mid (f,L)\in\cF\}$. If $\cF$ is $K$-invariant, then $\cF$
  is closed under dual folds.
\end{lemma}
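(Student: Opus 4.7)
The plan is to realize the dual fold as a single element of the action of $K$ on $\Fold(G)$ from Lemma~\ref{lem:action} and then invoke $K$-invariance.

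Specifically, fix an arbitrary fold $(f,L)\in\cF$; our goal is to show that its dual $(f,f(L))$ lies in $\cF$. By definition of $K$, the involution $f$ itself is one of the generators of $K$, hence $f\in K$. Applying the action of $f$ on $(f,L)$ from Lemma~\ref{lem:action} gives
\begin{equation*}
  f\cdot(f,L) = (f\comp f\comp f^{-1},\, f(L)) = (f, f(L)),
\end{equation*}
where we used $f\comp f = \id_{V(G)}$ (so $f\comp f\comp f^{-1} = f$) since $f$ is an involution. Therefore the dual fold $(f,f(L))$ is in the $K$-orbit of $(f,L)$, and $K$-invariance of $\cF$ gives $(f,f(L))\in\cF$, as required.

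There is essentially no obstacle here: the only mild subtlety is noticing that the dual operation $(f,L)\mapsto (f,f(L))$ coincides with the action of the involution $f$ on itself, after which $K$-invariance does the rest. The lemma is really just a bookkeeping consequence of Lemma~\ref{lem:action} together with the observation that the cut-involutions of the folds in $\cF$ are by definition contained in the generating set of $K$.
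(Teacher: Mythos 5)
Your proof is correct and is essentially identical to the paper's argument: both realize the dual fold as $f\cdot(f,L)$ via the action of Lemma~\ref{lem:action}, note that $f\in K$ as a generator, and conclude by $K$-invariance. (The only cosmetic remark is that $f\comp f\comp f^{-1}=f$ holds for any bijection, so the appeal to $f$ being an involution is not even needed there.)
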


\begin{proof}
  Follows from Lemma~\ref{lem:action} and since for $(f,L)\in\cF$, its dual fold can be written as
  \begin{equation*}
    (f,f(L)) = (f\comp f\comp f^{-1}, f(L)) = f\cdot (f,L).\qedhere
  \end{equation*}
\end{proof}

\section{Flag isomorphism versus homomorphism densities}
\label{sec:flagiso}

One of the classic results of graphon theory by \Lovasz\ (see~\cite[Theorem~5.29]{Lov12}) is that
two bigraphs $G_1$ and $G_2$ without isolated vertices are isomorphic if and only if
$t(G_1,W)=t(G_2,W)$ for every bigraphon $W$. The same argument easily applies to extend this result
to flags (see Proposition~\ref{prop:flagiso} below). The main goal of this section is to answer
instead a related question: when are the functions $t(F_1,W)$ and $t(F_2,W)$ linearly dependent for
every bigraphon $W$? As we will see in Proposition~\ref{prop:coreiso}, this happens precisely when
the restrictions to the connected cores, $F_1\rest_{C(F_1)}$ and $F_2\rest_{C(F_2)}$, are
isomorphic.

\begin{lemma}\label{lem:flagiso}
  Let $F_1$ and $F_2$ be finite flags and let $W = (W_i)_{i\in C_{F_1}}$ be a sequence of bigraphons
  $W_i\colon\Omega\times\Lambda\to\RR_+$. If $F_1\cong F_2$, then
  \begin{equation*}
    t(F_1,W)\comp\eta = t(F_2,W),
  \end{equation*}
  where $\eta\colon X^{T_1(F_2)}\times Y^{T_2(F_2)}\to X^{T_1(F_1)}\times Y^{T_2(F_1)}$ is given by
  $\eta(x,y)\df (x',y')$, where
  \begin{align*}
    x'_u & \df x_{(\theta_{F_2}\comp\theta_{F_1}^{-1})(u)}, &
    y'_v & \df y_{(\theta_{F_2}\comp\theta_{F_1}^{-1})(v)}.
  \end{align*}
\end{lemma}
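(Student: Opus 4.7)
The plan is to simply perform the change of variables on the integral defining $t(F_2,W)$ induced by a flag isomorphism $\varphi\colon H(F_1)\to H(F_2)$. Since $\varphi$ preserves the partial labeling, its restriction to $T(F_1)$ must coincide with $\theta_{F_2}\comp\theta_{F_1}^{-1}$, which means that $\varphi$ literally is the map underlying $\eta$ (on type vertices) extended bijectively to non-type vertices.

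First I would fix such an isomorphism $\varphi$ and, for $(x,y)\in X^{T_1(F_2)}\times Y^{T_2(F_2)}$, expand $t(F_1,W)(\eta(x,y))$ as an integral over $X^{V_1(F_1)\setminus T_1(F_1)}\times Y^{V_2(F_1)\setminus T_2(F_1)}$. Then I would substitute, for each non-type vertex $u\in V_1(F_1)\setminus T_1(F_1)$ (respectively $v\in V_2(F_1)\setminus T_2(F_1)$), the dummy variable $x'_u$ by $x''_{\varphi(u)}$ (resp.\ $y'_v$ by $y''_{\varphi(v)}$). Because $\varphi$ restricts to a bijection $V_1(F_1)\setminus T_1(F_1)\to V_1(F_2)\setminus T_1(F_2)$ (and analogously on the $V_2$-side), and because the product measures $\mu^{V_1(F_1)\setminus T_1(F_1)}$ and $\nu^{V_2(F_1)\setminus T_2(F_1)}$ are invariant under reindexing of coordinates, this substitution is measure-preserving and yields an integral over $X^{V_1(F_2)\setminus T_1(F_2)}\times Y^{V_2(F_2)\setminus T_2(F_2)}$.

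Next I would identify the new integrand with the one defining $t(F_2,W)(x,y)$. Because $\varphi$ is a flag isomorphism, the map $(u,v)\mapsto(\varphi(u),\varphi(v))$ is a bijection $E(F_1)\to E(F_2)$ with $c_{F_2}(\varphi(u),\varphi(v)) = c_{F_1}(u,v)$; hence the product $\prod_{(u,v)\in E(F_1)}W_{c_{F_1}(u,v)}(\widetilde{x}_u,\widetilde{y}_v)$ after the substitution becomes $\prod_{(u',v')\in E(F_2)}W_{c_{F_2}(u',v')}(\widetilde{x}'_{u'},\widetilde{y}'_{v'})$, where the entries of $\widetilde{x}'$ and $\widetilde{y}'$ at type vertices come from $(x,y)$ itself (by construction of $\eta$ and because $\varphi$ restricts to $\theta_{F_2}\comp\theta_{F_1}^{-1}$ on types) and at non-type vertices are the new dummy variables. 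This is exactly $t(F_2,W)(x,y)$.

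There is no real obstacle here: the proof is a bookkeeping exercise in change-of-variables, with the only subtlety being to verify that $\varphi$ at type vertices behaves precisely as $\eta$ prescribes, which is forced by the label-preservation condition $\varphi\comp\theta_{F_1} = \theta_{F_2}$.
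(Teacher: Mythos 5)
Your proposal is correct and is exactly the argument the paper has in mind: the paper's proof is the one-line remark that the identity follows by relabeling the variables of integration through the isomorphism, and your write-up simply carries out that relabeling in detail (checking that label-preservation forces the isomorphism to act as $\eta$ on type vertices and that the product measure is invariant under the reindexing of the non-type coordinates).
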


\begin{proof}
  Follows by relabeling the variables of integration through the isomorphism.
\end{proof}

\begin{proposition}\label{prop:flagiso}
  Let $F_1$ and $F_2$ be finite flags with the same type and suppose that for every $t\in[2]$, all
  isolated vertices of $G(F_t)$ are in $T(F_t)$. Then following are equivalent.
  \begin{enumerate}
  \item\label{prop:flagiso:iso} We have $F_1\cong F_2$.
  \item\label{prop:flagiso:hom} For every sequence $W = (W_i)_{i\in C_{F_1}}$ of bigraphons
    $W_i\colon\Omega\times\Lambda\to\RR_+$ and every $(x,y)\in X^{T_1(F)}\times Y^{T_2(F)}$, we have
    \begin{equation*}
      t(F_1,W)\comp\eta = t(F_2,W),
    \end{equation*}
    where $\eta\colon X^{T_1(F_2)}\times Y^{T_2(F_2)}\to X^{T_1(F_1)}\times Y^{T_2(F_1)}$ is given by
    $\eta(x,y)\df (x',y')$, where
    \begin{align*}
      x'_u & \df x_{(\theta_{F_2}\comp\theta_{F_1}^{-1})(u)}, &
      y'_v & \df y_{(\theta_{F_2}\comp\theta_{F_1}^{-1})(v)}.
    \end{align*}
  \item\label{prop:flagiso:homae} Item~\ref{prop:flagiso:hom} holds for $\mu\otimes\nu$-almost every
    $(x,y)$ in place of every $(x,y)$.
  \item\label{prop:flagiso:homcount} For every finite flag $F$, we have
    $\lvert\Hom(F_1,F)\rvert=\lvert\Hom(F_2,F)\rvert$.
  \end{enumerate}
\end{proposition}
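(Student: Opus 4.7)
The plan is to establish the cycle $\ref{prop:flagiso:iso} \Rightarrow \ref{prop:flagiso:hom} \Rightarrow \ref{prop:flagiso:homae} \Rightarrow \ref{prop:flagiso:homcount} \Rightarrow \ref{prop:flagiso:iso}$. The first implication is Lemma~\ref{lem:flagiso}, and the second is immediate.

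For $\ref{prop:flagiso:homae} \Rightarrow \ref{prop:flagiso:homcount}$, fix an arbitrary finite flag $F$; if its type differs from that of $F_1$ and $F_2$, then both $\Hom$-counts vanish and there is nothing to prove, so assume the types agree. Apply item~\ref{prop:flagiso:homae} with $W\df W^{H(F)}$, evaluated at the point $(x,y)$ whose coordinates correspond to $\theta_F$ via the common type identification. Since $V_1(H(F))$ and $V_2(H(F))$ carry the uniform probability measure on finite sets, every point has positive measure, so the a.e.\ equality holds at this point; formula~\eqref{eq:tFWH} then rewrites it as
\begin{equation*}
\frac{\lvert\Hom(F_1,F)\rvert}{v_1(F)^{v_1(F_1)-k_1}\cdot v_2(F)^{v_2(F_1)-k_2}}
=
\frac{\lvert\Hom(F_2,F)\rvert}{v_1(F)^{v_1(F_2)-k_1}\cdot v_2(F)^{v_2(F_2)-k_2}},
\end{equation*}
where $k_j\df\lvert T_j(F_1)\rvert=\lvert T_j(F_2)\rvert$. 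To clear the normalizers I show $v_1(F_1)=v_1(F_2)$ (the $V_2$-side is analogous). The no-isolated-vertex hypothesis implies that augmenting any flag by one new isolated vertex in $V_1$ outside the type does not change its $\Hom$-count from either $F_t$: a non-type vertex of $F_t$ has a neighbor that would have to map to a neighbor of the isolated vertex (of which there are none), while type vertices are pinned by $\theta$. Specializing the displayed identity first to $F\df F_1$ and then to its one-isolated-vertex augmentation and dividing, the $\Hom$-counts cancel and we are left with $\bigl((v_1(F_1)+1)/v_1(F_1)\bigr)^{v_1(F_1)-v_1(F_2)}=1$, which forces $v_1(F_1)=v_1(F_2)$. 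With the vertex counts matching, the normalizers cancel in the displayed identity and $\lvert\Hom(F_1,F)\rvert=\lvert\Hom(F_2,F)\rvert$ for every $F$.

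For $\ref{prop:flagiso:homcount} \Rightarrow \ref{prop:flagiso:iso}$, I follow the standard \Lovasz-style argument, adapted to flags. Partitioning flag homomorphisms $F_t\to F'$ by their equivalence kernel (with vertices in $T(F_t)$ held as singletons to respect the type) expresses $\lvert\Hom(F_t,F')\rvert$ as a sum over bipartition-respecting partitions $\pi$ of $V(F_t)\setminus T(F_t)$ of counts of \emph{injective} flag homomorphisms $F_t/\pi\to F'$; M\"obius inversion converts the equality of $\Hom$-counts into equality of injective flag-homomorphism counts. A further inclusion-exclusion over non-edges (toggling them one at a time) upgrades this to $\lvert\mathrm{Emb}(F_1,F')\rvert=\lvert\mathrm{Emb}(F_2,F')\rvert$ for every finite flag $F'$. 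Taking $F'\df F_1$, since any flag embedding of a finite flag into itself is bijective hence an automorphism, $\lvert\mathrm{Emb}(F_2,F_1)\rvert=\lvert\mathrm{Emb}(F_1,F_1)\rvert=\lvert\Aut(F_1)\rvert\geq 1$, so there is a flag embedding $F_2\hookrightarrow F_1$; symmetrically there is one $F_1\hookrightarrow F_2$. Combined with $v_1(F_1)=v_1(F_2)$ and $v_2(F_1)=v_2(F_2)$ from the previous step, each embedding is a bijection, hence a flag isomorphism.

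The main obstacle is the vertex-count step inside $\ref{prop:flagiso:homae} \Rightarrow \ref{prop:flagiso:homcount}$, where the no-isolated-vertex hypothesis enters crucially through the one-isolated-vertex augmentation trick that pins down the exponents of $v_1(F)$ and $v_2(F)$ in the normalizers; the remainder is either direct rewriting or a standard M\"obius-plus-inclusion-exclusion bookkeeping.
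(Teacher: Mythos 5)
Your chain \ref{prop:flagiso:iso}$\implies$\ref{prop:flagiso:hom}$\implies$\ref{prop:flagiso:homae} matches the paper, and your route to \ref{prop:flagiso:homcount} is a workable variant of the paper's: where the paper pins down $v_j(F_1)=v_j(F_2)$ with an explicit two-point bigraphon, you read the identity off~\eqref{eq:tFWH} at the atom given by $\theta_F$ and pin the exponents by adding an isolated vertex to the target; the isolated-vertex hypothesis is indeed exactly what guarantees no vertex of $F_t$ can map to the new vertex. Two patchable omissions there: the natural bigraphon $W^{H(F)}$ is only defined when $v_1(F),v_2(F)>0$, so the degenerate case $v_1(F_1)=0$ or $v_2(F_1)=0$ (where your choice $F\df F_1$ is unavailable) and the targets $F$ with $v_1(F)=0$ or $v_2(F)=0$ in the final ``for every $F$'' claim need the separate elementary treatment the paper gives.

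The genuine gap is in \ref{prop:flagiso:homcount}$\implies$\ref{prop:flagiso:iso}: your M\"obius inversion runs in the wrong direction. The kernel decomposition reads $\lvert\Hom(F_t,F')\rvert=\sum_{\pi}\mathrm{inj}(F_t/\pi,F')$, so inverting it expresses $\mathrm{inj}(F_t,F')$ in terms of hom counts from the \emph{quotients} of the source $F_t$ (and your non-edge toggling likewise expresses embedding counts via injective counts from edge-augmentations of the source). But hypothesis~\ref{prop:flagiso:homcount} only equates hom counts from $F_1$ and $F_2$ themselves; it gives no information relating $\lvert\Hom(F_1/\pi,F')\rvert$ to $\lvert\Hom(F_2/\sigma,F')\rvert$, and the two partition lattices and quotient families are unrelated, so equality of the two sums cannot be converted into equality of injective, let alone embedding, counts. (Secondary issues: kernels of flag homomorphisms need not keep type vertices as singletons, since a non-type vertex may share its image with a type vertex; and invoking ``$v_1(F_1)=v_1(F_2)$ from the previous step'' inside this implication is out of place in a cycle proof, though mutual embeddings would yield it directly.) The paper sidesteps all of this by doing inclusion--exclusion on the \emph{target} side: it counts surjective homomorphisms via $\sum_{U\supseteq T(F_2)}(-1)^{\lvert V(F_2)\setminus U\rvert}\lvert\Hom(F_1,F_2\rest_U)\rvert$, a formula involving only hom counts from $F_1$ (and $F_2$) into various flags, hence directly controlled by~\ref{prop:flagiso:homcount}; this yields surjections both ways, then equal vertex and edge counts, and hence an isomorphism. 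Replacing your M\"obius/inclusion--exclusion middle with this surjection count repairs the proof.
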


\begin{proof}
  The implication~\ref{prop:flagiso:iso}$\implies$\ref{prop:flagiso:hom} follows directly from
  Lemma~\ref{lem:flagiso} and the
  implication~\ref{prop:flagiso:hom}$\implies$\ref{prop:flagiso:homae} is trivial.

  \medskip

  We now prove the implication~\ref{prop:flagiso:homae}$\implies$\ref{prop:flagiso:homcount}.

  We start by showing that $v_j(F_1)=v_j(F_2)$ for every $j\in[2]$. We show only the case $j=1$ as
  the other case is analogous. For this, consider the bigraphon $U\colon[2]\times[1]\to\RR_+$ given
  by $U(x,y)\df\One[x=1]$, where both $[2]$ and $[1]$ are equipped with uniform probability
  measures and we consider the sequence $W=(U)_{i\in C_{F_1}}$ of bigraphons that is constant equal
  to $U$. Then by letting $(x,y)\in [2]^{T_1(F_2)}\times[1]^{T_2(F_2)}$ be given by
  \begin{equation*}
    x_u \df y_v \df 1,
  \end{equation*}
  we have
  \begin{align*}
    t(F_1,W)(\eta(x,y))
    & =
    2^{-v_1(F_1)+\lvert T_1(F_1)\rvert},
    &
    t(F_2,W)(x,y)
    & =
    2^{-v_1(F_2)+\lvert T_1(F_2)\rvert}.
  \end{align*}

  Since $t(F_1,W)(\eta(x,y))=t(F_2,W)(x,y)$ (as $t(F_1,W)\comp\eta = t(F_2,W)$ a.e.) and $\lvert
  T_1(F_1)\rvert = \lvert T_1(F_2)\rvert$ (as $F_1$ and $F_2$ have the same type), it follows that
  $v_1(F_1)=v_1(F_2)$.
  
  We now show that $\lvert\Hom(F_1,F)\rvert=\lvert\Hom(F_2,F)\rvert$ for every finite flag $F$. Note
  that if $F$ does not have the same type as $F_1$ (and $F_2$), then both $\Hom(F_1,F)$ and
  $\Hom(F_2,F)$ are empty. Suppose then that $F$ has the same type as $F_1$ (and $F_2$).

  If $v_j(F)=0$ for some $j\in[2]$, then $\Hom(F_1,F)$ and $\Hom(F_2,F)$ are either both empty (if
  $v_j(F_1)=v_j(F_2) > 0$) or contain only the function that show that $F_1$, $F_2$ and $F$ all have
  the same type (as we must have $v_j(F_1)=v_j(F_2)=0$, which forces $F_1$ and $F_2$ to have no
  edges, hence all their vertices must be in $T(F_1)$ and $T(F_2)$, respectively, as they are
  isolated).

  Suppose then that $v_1(F),v_2(F)>0$. By~\eqref{eq:tFWH}, for each $t\in[2]$, we have
  \begin{equation*}
    \lvert\Hom(F_t,F)\rvert
    =
    v_1(F)^{v_1(F_t)-\lvert T_1(F)\rvert}\cdot v_2(F)^{v_2(F_t)-\lvert T_2(F)\rvert}
    \cdot t(F_t,W^{H(F)})(x^t,y^t),
  \end{equation*}
  where
  \begin{align*}
    x^t_u & \df (\theta_F\comp\theta_{F_t}^{-1})(u), &
    y^t_u & \df (\theta_F\comp\theta_{F_t}^{-1})(v).
  \end{align*}

  We have already shown that $v_j(F_1)=v_j(F_2)$ ($j\in[2]$) and we know that $\lvert
  T_j(F_1)\rvert=\lvert T_j(F_2)\rvert$ ($j\in[2]$) since $F_1$ and $F_2$ have the same type, so it
  suffices to show $t(F_1,W^{H(F)})(x^1,y^1) = t(F_2,W^{H(F)})(x^2,y^2)$. But indeed, this follows
  since
  \begin{equation*}
    t(F_1,W^{H(F)})(x^1,y^1)
    =
    t(F_1,W^{H(F)})(\eta(x^2,y^2))
    =
    t(F_2,W^{H(F)})(x^2,y^2),
  \end{equation*}
  where the second equality follows from the almost everywhere assumption of
  item~\ref{prop:flagiso:homae} (as $(x^2,y^2)$ is an atom of the space).

  Thus $\lvert\Hom(F_1,F)\rvert=\lvert\Hom(F_2,F)\rvert$ for every finite flag $F$.

  \medskip
  
  For the final implication~\ref{prop:flagiso:homcount}$\implies$\ref{prop:flagiso:iso}, let
  \begin{equation*}
    m \df \lvert\{g\in\Hom(F_1,F_2) \mid \im(g) = V(F_2)\}\rvert
  \end{equation*}
  be the number of surjective homomorphisms from $F_1$ to $F_2$. By inclusion-exclusion (twice), we
  have
  \begin{align*}
    m
    & =
    \sum_{\substack{U\subseteq V(F_2)\\ T(F_2)\subseteq U}}
    (-1)^{\lvert V(F_2)\setminus U\rvert}\cdot\lvert\Hom(F_1,F_2\rest_U)\rvert
    \\
    & =
    \sum_{\substack{U\subseteq V(F_2)\\ T(F_2)\subseteq U}}
    (-1)^{\lvert V(F_2)\setminus U\rvert}\cdot\lvert\Hom(F_2,F_2\rest_U)\rvert
    \\
    & =
    \lvert\{g\in\Hom(F_2,F_2) \mid \im(g)=V(F_2)\}\rvert
    \geq
    1,
  \end{align*}
  where the second equality follows from the assumption of item~\ref{prop:flagiso:homcount}.

  Thus, there exists some surjective homomorphism $g_1$ from $F_1$ to $F_2$, which in particular
  implies $v(F_1)\geq v(F_2)$.

  Analogously, it follows that there exists a surjective homomorphism $g_2$ from $F_2$ to $F_1$ and
  $v(F_2)\geq v(F_1)$.

  Thus, we conclude that $v(F_1)=v(F_2)$ and since both these are finite quantities, it follows that
  both $g_1$ and $g_2$ are in fact bijective. Since $g_1$ is an injective homomorphism, it follows
  that $e(F_1)\leq e(F_2)$. Applying the same rationale for $g_2$, we get $e(F_2)\leq e(F_1)$, hence
  $e(F_1)=e(F_2)$. Since $e(F_1)=e(F_2)$ is finite, it follows that $g_1$ (and $g_2$) is in fact an
  embedding, hence an isomorphism.
\end{proof}

\begin{proposition}\label{prop:coreiso}
  Let $F_1$ and $F_2$ be finite flags with the same type. Then following are equivalent.
  \begin{enumerate}
  \item\label{prop:coreiso:iso} We have $F_1\rest_{C(F_1)}\cong F_2\rest_{C(F_2)}$.
  \item\label{prop:coreiso:hom} For every sequence $W = (W_i)_{i\in C_{F_1}}$ of bigraphons
    $W_i\colon\Omega\times\Lambda\to\RR_+$, there exist $\lambda_1,\lambda_2\in\RR_+$ not both zero
    such that for every $(x,y)\in X^{T_1(F)}\times Y^{T_2(F)}$, we have
    \begin{equation*}
      \lambda_1\cdot t(F_1,W)\comp\eta = \lambda_2\cdot t(F_2,W),
    \end{equation*}
    where $\eta\colon X^{T_1(F_2)}\times Y^{T_2(F_2)}\to X^{T_1(F_1)}\times Y^{T_2(F_1)}$ is given by
    $\eta(x,y)\df (x',y')$, where
    \begin{align*}
      x'_u & \df x_{(\theta_{F_2}\comp\theta_{F_1}^{-1})(u)}, &
      y'_v & \df y_{(\theta_{F_2}\comp\theta_{F_1}^{-1})(v)}.
    \end{align*}
  \item\label{prop:coreiso:homae} Item~\ref{prop:coreiso:hom} holds for $\mu\otimes\nu$-almost every
    $(x,y)$ in place of every $(x,y)$.
  \item\label{prop:coreiso:homcore} For every sequence $W = (W_i)_{i\in C_{F_1}}$ of bigraphons
    $W_i\colon\Omega\times\Lambda\to\RR_+$, there exist $\lambda_1,\lambda_2\in\RR_+$ not both zero
    such that for every $(x,y)\in X^{T_1(F)}\times Y^{T_2(F)}$, we have
    \begin{equation*}
      \lambda_1\cdot t(F_1\rest_{C(F_1)},W)\comp\eta = \lambda_2\cdot t(F_2\rest_{C(F_2)},W),
    \end{equation*}
    where $\eta\colon X^{T_1(F_2)}\times Y^{T_2(F_2)}\to X^{T_1(F_1)}\times Y^{T_2(F_1)}$ is given by
    $\eta(x,y)\df (x',y')$, where
    \begin{align*}
      x'_u & \df x_{(\theta_{F_2}\comp\theta_{F_1}^{-1})(u)}, &
      y'_v & \df y_{(\theta_{F_2}\comp\theta_{F_1}^{-1})(v)}.
    \end{align*}
  \item\label{prop:coreiso:homcoreae} Item~\ref{prop:coreiso:homcore} holds for $\mu\otimes\nu$-almost
    every $(x,y)$ in place of every $(x,y)$.
  \item\label{prop:coreiso:homcoreone} Item~\ref{prop:coreiso:homcore} holds with
    $\lambda_1=\lambda_2=1$.
  \item\label{prop:coreiso:homcoreaeone} Item~\ref{prop:coreiso:homcoreae} holds with
    $\lambda_1=\lambda_2=1$.
  \end{enumerate}
\end{proposition}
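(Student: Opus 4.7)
Write $G_t := F_t\rest_{C(F_t)}$ for the core flags and $g_t(W)$ for the associated density $t(G_t, W)$ (pre-composed with $\eta$ when $t = 1$). The structural pivot of the proof is the factorization
\[
  t(F_t, W)(x, y) = g_t(W)(x, y) \cdot \alpha_t(W),
\]
where $\alpha_t(W) := t(H(F_t)\rest_{V(F_t)\setminus C(F_t)}, W) \in \RR_+$ is a scalar depending only on $W$. This holds because the components of $G(F_t)$ disjoint from $T(F_t)$ integrate over coordinates independent of $(x, y)$, and all edges of $F_t$ either lie entirely in $C(F_t)$ or entirely in $V(F_t)\setminus C(F_t)$.

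The equivalences $(i) \Leftrightarrow (vi) \Leftrightarrow (vii)$ follow by applying Proposition \ref{prop:flagiso} to the cores $G_1, G_2$: every isolated vertex of $G_t$ lies in $T(F_t)$ by definition of the core, so the hypothesis of that proposition is met, and its three items translate to $(i), (vi), (vii)$ here. Using the factorization, I would then deduce $(vi) \Rightarrow (ii)$, $(vii) \Rightarrow (iii)$, $(iv) \Rightarrow (ii)$, and $(v) \Rightarrow (iii)$ by multiplying both sides by $\alpha_t(W)$, handling the degenerate case $\alpha_1(W) = \alpha_2(W) = 0$ trivially since both $t(F_t, W)$ vanish. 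The reverse translations $(ii) \Rightarrow (iv)$ and $(iii) \Rightarrow (v)$ follow by dividing, with a short case analysis when some $\alpha_t(W) = 0$ (then $t(F_t, W) \equiv 0$ and suitable $\lambda$'s can still be chosen). The implications $(vi) \Rightarrow (iv) \Rightarrow (v)$, $(vii) \Rightarrow (v)$, and $(ii) \Rightarrow (iii)$ are immediate, either by taking $\lambda_1 = \lambda_2 = 1$ or by passing from everywhere to almost everywhere.

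To close the loop, the remaining nontrivial implication I would prove is $(v) \Rightarrow (vii)$: given that for every $W$ the functions $g_1(W), g_2(W)$ are $\RR_+$-linearly dependent (with $W$-dependent scalars), deduce that $g_1(W) \equiv g_2(W)$ almost everywhere. The pivot is that for $W_{\mathrm{const}} \equiv 1$ one has $g_1(W_{\mathrm{const}}) \equiv g_2(W_{\mathrm{const}}) \equiv 1$, fixing the proportionality constant to $1$ at this base point. I would then study the deformation $W^{(s)} := W_{\mathrm{const}} + s(W - W_{\mathrm{const}})$ for $s \in [0, 1]$; since $g_t(W^{(s)})(x, y)$ is a polynomial in $s$ whose coefficients are sub-flag densities of $G_t$ in $W - W_{\mathrm{const}}$, the hypothesis $g_1(W^{(s)}) = c(s)\,g_2(W^{(s)})$ (with $c(0) = 1$) becomes a polynomial identity in $s$; matching coefficients order-by-order forces $c(s) \equiv 1$ together with pairwise equality of sub-flag densities of $G_1$ and $G_2$, yielding $g_1(W) = g_2(W)$ at $s = 1$.

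The main obstacle is precisely this Taylor-coefficient argument: turning the order-by-order polynomial identities into $(x, y)$-uniform equalities of sub-flag densities and propagating them to all orders. A complementary route I would also pursue, perhaps cleaner for the final write-up, is to specialise $W = W^H$ for natural bigraphons: by \eqref{eq:tFWH}, $(v)$ then becomes an identity of hom-counts (in a discrete setting where almost everywhere is pointwise), and choosing $H = H(G_1)$ guarantees via the identity homomorphism that $g_1(W^H) \not\equiv 0$, forcing through $(v)$ a label-preserving homomorphism $G_2 \to (H(G_1), \theta_{G_1})$; symmetry with $H(G_2)$ yields one in the opposite direction, and the surjective-homomorphism counting argument from the proof of Proposition \ref{prop:flagiso} can be adapted to upgrade these to an isomorphism $G_1 \cong G_2$, which is $(i)$.
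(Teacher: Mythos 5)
Your scaffolding --- the factorization $t(F_t,W)=\alpha_t(W)\cdot t(F_t\rest_{C(F_t)},W)$, the appeal to Proposition~\ref{prop:flagiso} on the cores for \ref{prop:coreiso:iso}$\iff$\ref{prop:coreiso:homcoreone}$\iff$\ref{prop:coreiso:homcoreaeone}, and the trivial arrows --- is the same as the paper's, up to one fixable blemish: your per-$W$ case analysis when some $\alpha_t(W)=0$ does not deliver \ref{prop:coreiso:hom}$\implies$\ref{prop:coreiso:homcore} at that $W$. If $\alpha_1(W)=0$ then $t(F_1,W)\equiv 0$ and the hypothesis at that single $W$ is satisfied with $\lambda_2=0$, so it carries no information about the core densities, which could a priori fail to be proportional there; the paper sidesteps this by reducing every item to strictly positive bigraphons via $W_\epsilon\df\max\{W,\epsilon\}$ and dominated convergence, and you would need the same (or a similar limiting) device.

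The genuine gap is the heart of the proposition: showing that the $W$-dependent proportionality constant is in fact $1$ (equivalently, is the same for every $W$), i.e.\ \ref{prop:coreiso:homcoreae}$\implies$\ref{prop:coreiso:homcoreaeone}. Neither of your routes establishes it. In route A, after clearing denominators the pointwise hypothesis only gives $g_1(W^{(s)})(\eta(x,y))\,I_2(s)=g_2(W^{(s)})(x,y)\,I_1(s)$ with $I_t(s)\df\int g_t(W^{(s)})$, and matching coefficients in $s$ does not force $I_1\equiv I_2$: integrating in $(x,y)$ returns a tautology, and $c(s)=I_1(s)/I_2(s)$ being a rational function with $c(0)=1$ does not make it identically $1$ without a further structural input --- you acknowledge this obstacle but do not resolve it. In route B, the inference ``$g_1(W^{H(G_1)})\not\equiv 0$ forces, through \ref{prop:coreiso:homcoreae}, a label-preserving homomorphism $G_2\to(H(G_1),\theta_{G_1})$'' is false: the linear-dependence condition is satisfied with $(\lambda_1,\lambda_2)=(0,1)$ whenever $g_2(W^{H(G_1)})\equiv 0$, so no homomorphism is forced; and even granted homomorphisms both ways, adapting the surjective-homomorphism count from Proposition~\ref{prop:flagiso} requires the proportionality constant to be the \emph{same} for all the induced targets $F\rest_U$ occurring in the inclusion--exclusion, which you never address. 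The paper's proof supplies exactly this missing ingredient: given two strictly positive bigraphon sequences $W^1,W^2$, it forms their disjoint union on the coproduct spaces with halved measures; because the core flags are core-connected, the density functions restrict on each half to the individual ones up to an explicit power of $2$, so the single pair $(\lambda_1,\lambda_2)$ provided by the hypothesis for the glued $W$ forces $\lambda_{W^1}=\lambda_{W^2}$, and taking $W^2\equiv 1$ pins the constant to $1$. A discrete version of the same gluing (applying \ref{prop:coreiso:homcoreae} to $W^{H}$ for disjoint unions of targets, and using core-connectedness to split the homomorphism counts) would also repair route B, but as written both routes are missing precisely this idea.
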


\begin{proof}
  The structure of the proof is as follows:
  \begin{equation*}
    \begin{tikzcd}
      \text{\ref{prop:coreiso:iso}}
      \arrow[r, leftrightarrow, "\ref{prop:flagiso}"]
      \arrow[dr, leftrightarrow, "\ref{prop:flagiso}"']
      &
      \text{\ref{prop:coreiso:homcoreone}}
      \arrow[d, leftrightarrow, "\ref{prop:flagiso}"]
      \arrow[r, dashed]
      &
      \text{\ref{prop:coreiso:hom}}
      \arrow[d, dashed]
      \arrow[r, leftrightarrow]
      &
      \text{\ref{prop:coreiso:homcore}}
      \arrow[d, dashed]
      \\
      &
      \text{\ref{prop:coreiso:homcoreaeone}}
      \arrow[r, dashed]
      &
      \text{\ref{prop:coreiso:homae}}
      \arrow[r, leftrightarrow]
      &
      \text{\ref{prop:coreiso:homcoreae}}
      \arrow[ll, bend left={30}]
    \end{tikzcd}
  \end{equation*}
  In the above, the arrows labeled~\ref{prop:flagiso} follow directly from
  Proposition~\ref{prop:flagiso} and the dashed arrows are trivial.

  Before we prove the other arrows, note that by appealing to the Dominated Convergence Theorem and
  approximating a bigraphon $W$ by $W_\epsilon\df\max\{W,\epsilon\}$ as $\epsilon\to 0^+$, in all
  items involving bigraphons, it suffices to show only the result for strictly positive
  bigraphons. In turn, strictly positive bigraphons have strictly positive homomorphism density
  functions.

  For the two horizontal double-arrows between the two rightmost columns, note that
  \begin{equation*}
    t(F_t,W)(x,y)
    =
    t(H(F_t) - C(F_t), W)\cdot t(F_t\rest_{C(F_t)},W)(x,y)
  \end{equation*}
  for every $t\in[2]$ and every $(x,y)\in X^{T_1(F_t)}\times Y^{T_2(F_t)}$, that is, the functions
  $t(F_t,W)$ and $t(F_t\rest_{C(F_t)},W)$ are non-negative multiples of each other. By considering
  strictly positive bigraphons, these functions are positive multiples of each other, so the two
  horizontal double-arrows between the two rightmost columns follow.

  It remains to prove the
  implication~\ref{prop:coreiso:homcoreae}$\implies$\ref{prop:coreiso:homcoreaeone}.

  As argued before, we consider only sequences $W$ of bigraphons that are strictly positive. Since
  for one such sequence $W$ both $t(F_1,W)$ and $t(F_2,W)$ are strictly positive functions, we know
  that there must exist $\lambda^W > 0$ such that
  \begin{equation*}
    t(F_1\rest_{C(F_1)},W)\comp\eta^W = \lambda^W\cdot t(F_2\rest_{C(F_2)},W)
  \end{equation*}
  almost everywhere for the corresponding $\eta^W$ function.

  We need to show that $\lambda_W=1$ for every $W$. First, note that for the sequence $\One$ of
  bigraphons that are $1$ everywhere (over some given spaces), both functions $t(F_1,\One)$ and
  $t(F_2,\One)$ are $1$ everywhere, from which we get $\lambda_{\One}=1$.

  Thus, to show that $\lambda_W=1$ for every $W$, it suffices to show that for any two sequences
  $W^1,W^2$ of strictly positive bigraphons $W^j\colon\Omega_j\times\Lambda_j\to(0,\infty)$
  ($j\in[2]$), we have $\lambda_{W^1}=\lambda_{W^2}$.

  For this, we define a new sequence of bigraphons $W=(W_i)_{i\in C_{F_1}}$ by $W_i\colon(X_1\cup
  X_2)\times(Y_1\cup Y_2)\to\RR_+$, where the spaces are equipped with the co-product
  $\sigma$-algebra and the probability measures $\mu$ and $\nu$ given by
  \begin{align*}
    \mu(A) & \df \frac{\mu_1(A\cap X_1) + \mu_2(A\cap X_2)}{2}, &
    \nu(A) & \df \frac{\nu_1(A\cap X_1) + \nu_2(A\cap X_2)}{2},
  \end{align*}
  and $W_i$ is given by
  \begin{equation*}
    W_i(x,y) \df
    \begin{dcases*}
      W^1_i(x,y), & if $x\in X_1$ and $y\in Y_1$,\\
      W^2_i(x,y) & if $x\in X_2$ and $y\in Y_2$,\\
      0, & otherwise.
    \end{dcases*}
  \end{equation*}
  (Note that, even though $W^1$ and $W^2$ are strictly positive, $W$ is not.)

  Since $F_1\rest_{C(F_1)}$ and $F_2\rest_{C(F_2)}$ are core-connected, it follows that for every
  $j\in[2]$ and every $(x,y)\in X_j^{T_1(F_2)}\times Y_j^{T_2(F_2)}$, we have
  \begin{align*}
    t(F_1\rest_{C(F_1)},W)(\eta(x,y))
    & =
    \frac{t(F_1\rest_{C(F_1)},W^j)(\eta(x,y))}{2^{\lvert C(F_1)\setminus T(F_1)\rvert}},
    &
    t(F_2\rest_{C(F_2)},W)(x,y)
    & =
    \frac{t(F_2\rest_{C(F_2)},W^j)(x,y)}{2^{\lvert C(F_2)\setminus T(F_2)\rvert}},
  \end{align*}
  which together with the definition of $\lambda^{W^j}$ yields
  \begin{equation}\label{eq:lambdaWj}
    \lambda_{W^j}
    =
    \frac{t(F_1\rest_{C(F_1)},W)(\eta(x,y))}{t(F_2\rest_{C(F_2)},W)(x,y)}
    \cdot 2^{\lvert C(F_1)\rvert - \lvert C(F_2)\rvert}
  \end{equation}
  for $\mu_j\otimes\nu_j$-almost every $(x,y)\in X_j^{T_1(F_2)}\times Y_j^{T_2(F_2)}$.

  Applying the assumption of item~\ref{prop:coreiso:homcoreae} to $W$, we know that there exist
  $\lambda_1,\lambda_2\in\RR_+$ not both zero such that
  \begin{equation*}
    \lambda_1\cdot t(F_1\rest_{C(F_1)},W)\comp\eta
    =
    \lambda_2\cdot t(F_2\rest_{C(F_2)},W)
  \end{equation*}
  almost everywhere, which together with~\eqref{eq:lambdaWj} yields
  \begin{equation*}
    \lambda_1\cdot\lambda_{W^j}
    =
    \lambda_2\cdot 2^{\lvert C(F_1)\rvert - \lvert C(F_2)\rvert}
  \end{equation*}
  for each $j\in[2]$. Since $t(F_2\rest_{C(F_2)},W) > 0$, we conclude that $\lambda_1 > 0$, from
  which we get
  \begin{equation*}
    \lambda_{W^j}
    =
    \frac{\lambda_2}{\lambda_1}\cdot 2^{\lvert C(F_1)\rvert - \lvert C(F_2)\rvert}
  \end{equation*}
  and since this holds for both $j=1$ and $j=2$, we get $\lambda^{W^1}=\lambda^{W^2}$.

  This concludes the proof of the proposition.
\end{proof}

\section{Reachability versus percolation}
\label{sec:reachvsperc}

It is obvious that if a folding problem $(G,c,\cC,\cF)$ is a positive percolation instance then it
must be a positive reachability instance (i.e., $c\ll_{\cF}\cC$ implies $c\preceq_{\cF}\cC$). The
goal of this section is to show Proposition~\ref{prop:abs}, a converse of this for finite absorbing
folding problems.

\begin{lemma}\label{lem:reach}
  Let $(G,c,\cC,\cF)$ be a finite folding problem. If $c\preceq_{\cF}\cC$, then $\cC$ is $\cF$-reachable
  from $c$ in at most $\lvert\im(c)\rvert^{e(G)}-1$ steps in the sense that there exist $c'\in\cC$,
  $t\in\NN$ with $t\leq\lvert\im(c)\rvert^{e(G)}-1$ and folds $(f_1,L_1),\ldots,(f_t,L_t)$ such that
  for every $i\in[t]$, either $(f_i,L_i)$ or its dual $(f_i,f_i(L_i))$ is in $\cF$ and
  \begin{equation}\label{eq:reach}
    c' = c\comp (f_1)_{L_1}\comp\cdots\comp (f_t)_{L_t}.
  \end{equation}

  In particular, there exists $\eta\in [2]^t$ with $t\leq\lvert\im(c)\rvert^{e(G)}-1$
  such that for
  \begin{equation*}
    T
    \df
    \{\tau\in[2]^{<\omega} \mid \tau\prec\eta\}
    =
    \{\eta\rest_{[i]} \mid i\in\{0,\ldots,t-1\}\},
  \end{equation*}
  there exists a folding tree $\phi\colon T\to\Fold(G)$ of $G$ such that $\im(\phi)\subseteq\cF$,
  $\phi[c](\eta)\in\cC$ and $\phi[c]^{-1}(\cC)\subseteq L(T\up)$.
\end{lemma}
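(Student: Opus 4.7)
The plan is to pick a \emph{shortest} witness of $c \preceq_{\cF} \cC$ and then show that, by shortness, its length is automatically bounded by $\lvert \im(c)\rvert^{e(G)}-1$ and all intermediate colorings avoid $\cC$. The folding tree is then just the ``path'' tree encoding this sequence.

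Since $c \preceq_{\cF} \cC$, by the equivalent characterization of $\preceq_{\cF}$ in the definitions, there exist $t \in \NN$, folds $(f_1,L_1),\ldots,(f_t,L_t)$ with each $(f_i,L_i)$ or its dual in $\cF$, and $c' \in \cC$ such that $c' = c \comp (f_1)_{L_1} \comp \cdots \comp (f_t)_{L_t}$. Choose $t$ to be \emph{minimum} over all such sequences (for all choices of $c'\in\cC$). Set $c_i \df c \comp (f_1)_{L_1}\comp\cdots\comp(f_i)_{L_i}$ for $i\in\{0,\ldots,t\}$, so $c_0 = c$ and $c_t = c'$. Then $c_i \notin \cC$ for every $i<t$, as otherwise the prefix of length $i$ would be a shorter witness. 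Moreover, the $c_i$ are pairwise distinct: if $c_i = c_j$ with $i<j$, then deleting the middle block $(f_{i+1})_{L_{i+1}}\comp\cdots\comp(f_j)_{L_j}$ yields a sequence of length $t-(j-i)<t$ still ending at $c'\in\cC$, contradicting minimality. Since each $c_i$ is a function $E(G)\to\im(c)$ (composition with a folding map only selects values already taken by $c$), and there are exactly $\lvert\im(c)\rvert^{e(G)}$ such functions, distinctness of $c_0,\ldots,c_t$ forces $t+1 \leq \lvert\im(c)\rvert^{e(G)}$, i.e., $t \leq \lvert\im(c)\rvert^{e(G)}-1$.

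For the second conclusion, define $\eta\in[2]^t$ and $\phi\colon T\to\Fold(G)$ (where $T = \{\eta\rest_{[i]} : i\in\{0,\ldots,t-1\}\}$) simultaneously by, for each $i\in[t]$, setting
\begin{equation*}
  (\phi(\eta\rest_{[i-1]}),\eta_i) \df
  \begin{dcases*}
    ((f_i,L_i),\ 1), & if $(f_i,L_i)\in\cF$,\\
    ((f_i,f_i(L_i)),\ 2), & otherwise (so its dual is in $\cF$).
  \end{dcases*}
\end{equation*}
Using the identity $f_{f(L)}^* = f_L$ from the definition of dual folds, in both cases the folding map applied at level $i-1$ along the path $\eta$ is exactly $(f_i)_{L_i}$, so a straightforward induction gives $\phi[c](\eta\rest_{[i]}) = c_i$ for all $i\in\{0,\ldots,t\}$. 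In particular $\phi[c](\eta)=c_t=c'\in\cC$, and $\im(\phi)\subseteq\cF$ by construction.

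It remains to check $\phi[c]^{-1}(\cC)\subseteq L(T\up)$. Since $T$ is the chain of proper prefixes of $\eta$, every element of $T$ has an extension in $T\up$ (either its continuation toward $\eta$ or, for the terminal node $\eta\rest_{[t-1]}$, the child $\eta$ itself), so the internal nodes of $T\up$ are exactly the elements of $T$, and the leaves of $T\up$ are $T\up\setminus T$. For $\tau\in T$, $\tau=\eta\rest_{[i]}$ for some $i<t$, and $\phi[c](\tau)=c_i\notin\cC$ by minimality. Hence every preimage of $\cC$ under $\phi[c]$ lies in $T\up\setminus T=L(T\up)$, as required. The only mild subtlety is the bookkeeping around left versus right folding maps for folds whose duals are in $\cF$, but the identity $f_{f(L)}^*=f_L$ handles it cleanly.
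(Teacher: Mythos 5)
Your proof is correct and follows essentially the same route as the paper: take a minimal-length witness, note that minimality forces the intermediate colorings $c_i$ to be pairwise distinct and outside $\cC$, bound $t$ by counting colorings with image in $\im(c)$, and encode the sequence as a path-shaped folding tree. If anything, your handling of the dual-fold bookkeeping (assigning the dual fold together with label $2$ and invoking $f_{f(L)}^*=f_L$) is slightly more careful than the paper's, which labels $\eta_i$ by membership of $(f_i,L_i)$ in $\cF$ but writes $\phi(\eta\rest_{[i]})\df(f_{i+1},L_{i+1})$ in all cases and leaves the analogous adjustment implicit.
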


\begin{proof}
  The second assertion follows from the first by taking a minimum $t$ such that~\eqref{eq:reach}
  holds, defining $\eta\in [2]^t$ and $\phi\colon T\to\Fold(G)$ by
  \begin{align*}
    \eta_i & \df 
    \begin{dcases*}
      1, & if $(f_i,L_i)\in\cF$,\\
      2, & otherwise,
    \end{dcases*}
    &
    \phi(\eta\rest_{[i]}) & \df (f_{i+1},L_{i+1})
  \end{align*}
  and noting that $\phi[c](\eta)=c'$ and the minimality of $t$ ensures that no internal node of
  $T\up$ is labeled by a coloring in $\cC$ (i.e., $\phi[c]^{-1}(\cC)\subseteq L(T\up)$).

  \medskip
    
  Let us prove the first assertion. Since $c\preceq_{\cF}\cC$, we know that such
  $(c',t,(f_i,L_i)_{i\in[t]})$ exist except that the bound $t\leq\lvert\im(c)\rvert^{e(G)}-1$ might
  not hold. Take one such $(c',t,(f_i,L_i)_{i\in[t]})$ that minimizes $t\in\NN$ and for each
  $i\in\{0,\ldots,t\}$, let
  \begin{equation*}
    c_i = c\comp (f_1)_{L_1}\comp\cdots\comp (f_t)_{L_t}.
  \end{equation*}
  Clearly $\im(c_i)\subseteq\im(c)$. On the other hand, the minimality of $t$ implies that the $c_i$
  are pairwise distinct. Since the set
  \begin{equation*}
    \{\widetilde{c}\in C_c^{E(G)} \mid \im(\widetilde{c})\subseteq\im(c)\}
  \end{equation*}
  has size $\lvert\im(c)\rvert^{e(G)}$ (which is finite as $G$ is finite) and contains all $c_i$, it
  follows that $t\leq\lvert\im(c)\rvert^{e(G)}-1$.
\end{proof}

\begin{proposition}\label{prop:abs}
  If $(G,c,\cC,\cF)$ is a finite absorbing folding problem with $c\preceq_{\cF}\cC$, then
  $c\ll_{\cF}\cC$.
\end{proposition}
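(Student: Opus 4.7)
The plan is to construct the required sequence $(\phi_n)_{n\in\NN}$ inductively, grafting short extensions onto each ``bad'' leaf (one whose label is not in $\cC$) at every step, and leveraging absorption to propagate the invariant ``$\phi_n[c](\xi)\preceq_\cF\cC$ for every $\xi\in T_n\up$''. The base case takes $\phi_0\colon\varnothing\to\Fold(G)$ so that $T_0\up=\{\varnothing\}$ and $\phi_0[c](\varnothing)=c$, for which the invariant holds by the hypothesis $c\preceq_\cF\cC$.

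For the inductive step, let $N\df\lvert\im(c)\rvert^{e(G)}-1$. Given $\phi_n\colon T_n\to\Fold(G)$ satisfying the invariant, for each leaf $\xi\in L(T_n\up)$ with $\phi_n[c](\xi)\notin\cC$, Lemma~\ref{lem:reach} applied to $\phi_n[c](\xi)$ yields a path-shaped folding tree of depth $t^\xi\leq N$ whose tip is labeled by a coloring in $\cC$ and all of whose internal nodes are labeled outside $\cC$. Define $\phi_{n+1}$ by grafting these path extensions at the corresponding bad leaves of $T_n$ (leaves whose labels already lie in $\cC$ are left untouched). The extension condition $T_n\subseteq T_{n+1}$ with $\phi_{n+1}\rest_{T_n}=\phi_n$ is automatic from the grafting, and $\im(\phi_{n+1})\subseteq\cF$ because every grafted fold comes from Lemma~\ref{lem:reach}.

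To see that the invariant is preserved, walk down each grafted path starting from its root $\xi$: since $\phi_n[c](\xi)\notin\cC$ but $\phi_n[c](\xi)\preceq_\cF\cC$ and $c\preceq_\cF\phi_n[c](\xi)$, absorption gives $\phi_n[c](\xi)\comp f_L\preceq_\cF\cC$ and $\phi_n[c](\xi)\comp f_L^*\preceq_\cF\cC$ for any $(f,L)\in\cF$; iterating this along the path---using that all its interior labels stay outside $\cC$ by Lemma~\ref{lem:reach}---shows that both the path-continuation children and the off-branch children satisfy the invariant, and the tip is trivially in $\cC$. For the measure estimate, each bad leaf $\xi\in L(T_n\up)$ contributes $\mu_{T_n\up}$-measure $2^{-\lvert\xi\rvert}$, and its grafted subtree places a $\cC$-labeled leaf of $T_{n+1}\up$-measure at least $2^{-\lvert\xi\rvert-N}$ below $\xi$, so with $\alpha_n\df\mu_{\phi_n[c]}(\cC)$ we obtain $\alpha_{n+1}\geq\alpha_n+2^{-N}(1-\alpha_n)$, which iterates to $1-\alpha_n\leq(1-2^{-N})^n\to 0$, i.e., $c\ll_\cF\cC$.

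The main technical delicacy is the bookkeeping of the grafting step: describing $T_{n+1}$ as $T_n$ together with the ``shifted'' path subtrees rooted at each bad leaf, verifying downward-closedness in $[2]^{<\omega}$, and checking that the induced coloring tree $\phi_{n+1}[c]$ agrees with each grafted extension when restricted to it. Once that setup is in place, the combinatorial heart of the argument is a one-line appeal to absorption, and the convergence is purely arithmetic.
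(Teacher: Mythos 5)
Your proposal is correct and follows essentially the same route as the paper: both graft the short path-shaped trees from Lemma~\ref{lem:reach} onto the leaves not yet in $\cC$, use absorption to keep every label $\cF$-reachable to $\cC$ (your invariant plays the role of the paper's sets $\cR$ and $\cR'$), and conclude via the same geometric estimate $1-\mu_{\phi_{n+1}[c]}(\cC)\leq(1-2^{-N})(1-\mu_{\phi_n[c]}(\cC))$. The only detail worth making explicit is that the uniform depth bound $t^\xi\leq N$ uses $\im(\phi_n[c](\xi))\subseteq\im(c)$, exactly as the paper notes.
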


Before we start the proof, we point out that the main idea of this proof was discovered by
Hatami~\cite[Theorem~2.9(iii)]{Hat10} when $G$ was a hypercube and $\cC$ was the set of
monochromatic colorings, it was then extended to arbitrary finite $G$ and $\cC$ set of monochromatic
colorings by Conlon--Lee~\cite[Theorem~3.2]{CL17}.

\begin{proof}
  Let
  \begin{align*}
    \cR & \df \{r\in C^{E(G)} \mid c\preceq_{\cF} r\land r\preceq_{\cF} \cC\},
    \\
    \cR' & \df
    \{r\comp f_L \mid r\in\cR\setminus\cC\land (f,L)\in\cF\}\cup
    \{r\comp f_L^* \mid r\in\cR\setminus\cC\land (f,L)\in\cF\}.
  \end{align*}
  Note that $c\in\cR$ and since $(G,c,\cC,\cF)$ is absorbing, for every $r\in\cR'$, we have
  $r\preceq_{\cF}\cC$, hence $\cR'\subseteq\cR$.

  Let $B\df\lvert\im(c)\rvert^{e(G)}-1\in\NN$ and note that since every $r\in\cR$ satisfies
  $c\preceq_{\cF} r$, we have $\im(r)\subseteq\im(c)$. By Lemma~\ref{lem:reach}, for each
  $r\in\cR$, there exists $\eta^r\in [2]^{t_r}$ with $t_r\leq B$ such that for
  \begin{equation*}
    T^r
    \df
    \{\tau\in[2]^{<\omega} \mid \tau\prec\eta^r\}
    =
    \{\eta^r\rest_{[i]} \mid i\in\{0,\ldots,t_r-1\}\},
  \end{equation*}
  there exists a folding tree $\phi^r\colon T^r\to\Fold(G)$ of $G$ such that
  $\im(\phi^r)\subseteq\cF$, $\phi^r[r](\eta)\in\cC$ and $\phi^r[r]^{-1}(\cC)\subseteq
  L(T^r\up)$. Note that if $r\in\cC$, then we must have $t_r=0$, so we get
  \begin{equation}\label{eq:muphirrcC}
    \mu_{\phi^r[r]}(\cC) \geq 2^{-t_r} \geq
    \begin{dcases*}
      2^{-B}, & if $r\notin\cC$,\\
      1, & if $r\in\cC$.
    \end{dcases*}
  \end{equation}

  From the structure of $T^r$, we know that every internal node $\tau\in
  T^r\up\setminus L(T^r\up)$ of $T^r\up$ is a prefix of $\eta$ (i.e., $\tau\preceq\eta$), which in
  particular implies that its coloring $\phi^r[r](\tau)$ satisfies
  \begin{equation*}
    c\preceq_{\cF} r\preceq_{\cF} \phi^r[r](\tau) \preceq_{\cF} \phi^r[r](\eta)\in\cC,
  \end{equation*}
  hence $\phi^r[r](\tau)\in\cR$. Since no internal node of $T^r\up$
  is labeled by a coloring in $\cC$, it in turn follows that every leaf $\tau\in L(T^r\up)$ of
  $T^r\up$ satisfies $\phi^r[r](\tau)\in\cR'$, hence is also an element of $\cR$. Thus, we conclude
  that $\im(\phi^r[r])\subseteq\cR$.

  Let us now construct a sequence of finite folding trees $(\phi_n)_{n\in\NN}$ of $G$ with
  $\phi_n\colon T_n\to\Fold(G)$ satisfying the following properties:
  \begin{enumerate}
  \item\label{it:imcF} For every $n\in\NN$, we have $\im(\phi_n)\subseteq\cF$.
  \item\label{it:imcR} For every $n\in\NN$, we have $\im(\phi_n[c])\subseteq\cR$.
  \item\label{it:extends} For every $n\in\NN_+$, we have $T_{n-1}\subseteq T_n$ and
    $\phi_n\rest_{T_{n-1}}=\phi_{n-1}$.
  \item\label{it:measure} For every $n\in\NN_+$, we have
    \begin{equation*}
      1 - \mu_{\phi_n[c]}(\cC) \leq (1 - \mu_{\phi_{n-1}[c]}(\cC))\cdot (1 - 2^{-B}).
    \end{equation*}
  \end{enumerate}
  
  We start by letting $\phi_0\colon\varnothing\to\Fold(G)$ be the empty tree ($T_0=\varnothing$). It
  is clear that $\phi_0$ satisfies items~\ref{it:imcF} and~\ref{it:imcR} (the other items are
  vacuous for $n=0$). Given $\phi_{n-1}\colon T_{n-1}\to\Fold(G)$ satisfying item~\ref{it:imcR},
  since $\im(\phi_n[c])\subseteq\cR$, we can let $\phi_n\colon T_n\to\Fold(G)$ be obtained from
  $\phi_{n-1}$ by appending to each leaf $\tau\in L(T_{n-1}\up)$ the folding tree
  $\phi^{\phi_{n-1}[c](\tau)}$ (note that $\phi_{n-1}[c](\tau)\in\cR$). Formally, we let
  \begin{equation}
    T_n
    \df
    T_{n-1}\cup
    \bigcup_{\tau\in L(T_{n-1}\up)}\{\tau\cdot\tau'\mid \tau'\in T^{\phi_{n-1}[c](\tau)}\},
  \end{equation}
  and note that the union above is of pairwise disjoint sets, so we can let
  \begin{equation*}
    \phi_n(\eta)
    =
    \begin{dcases*}
      \phi_{n-1}(\eta),
      & if $\eta\in T_{n-1}$,
      \\
      \phi^{\phi_{n-1}[c](\tau)}(\tau'),
      & if $\eta=\tau\cdot\tau'$ with $\tau\in L(T_{n-1}\up)$ and $\tau'\in
      T^{\phi_{n-1}[c](\tau)}$.
    \end{dcases*}
  \end{equation*}

  Since for every $r\in\cR$, we have $\im(\phi^r)\subseteq\cF$, it is clear that item~\ref{it:imcF}
  ($\im(\phi_n)\subseteq\cF$) is satisfied. It is also clear that
  \begin{equation*}
    \im(\phi_n[c])
    =
    \im(\phi_{n-1}[c])\cup\bigcup_{\tau\in L(T_{n-1}\up)}\im(\phi^{\phi_{n-1}[c](\tau)}[\phi_{n-1}[c](\tau)])
  \end{equation*}
  and since for every $r\in\cR$, we have $\im(\phi^r[r])\subseteq\cR$, item~\ref{it:imcR}
  ($\im(\phi_n[c])\subseteq\cR$) also follows.

  By construction, it is clear that item~\ref{it:extends} ($T_{n-1}\subseteq T_n$ and
  $\phi_n\rest_{T_{n-1}}=\phi_{n-1}$) holds.

  Finally, since $T_n\supseteq T_{n-1}$, we have
  \begin{align*}
    \mu_{\phi_n[c]}(\cC)
    & =
    \sum_{\tau\in L(T_{n-1}\up)} 2^{-\lvert\tau\rvert}\cdot
    \mu_{\phi^{\phi_{n-1}[c](\tau)}[\phi_{n-1}[c](\tau)]}(\cC)
    \\
    & \geq
    \sum_{\substack{\tau\in L(T_{n-1}\up)\\\phi_{n-1}[c](\tau)\in\cC}} 2^{-\lvert\tau\rvert}
    +
    \sum_{\substack{\tau\in L(T_{n-1}\up)\\\phi_{n-1}[c](\tau)\notin\cC}} 2^{-\lvert\tau\rvert}
    \cdot 2^{-B}
    \\
    & =
    \mu_{\phi_{n-1}[c]}(\cC) + (1-\mu_{\phi_{n-1}[c]}(\cC))\cdot 2^{-B}
    \\
    & =
    1 - (1 - \mu_{\phi_{n-1}[c]}(\cC))\cdot (1 - 2^{-B}),
  \end{align*}
  where the inequality follows from~\eqref{eq:muphirrcC}. Thus, item~\ref{it:measure} also
  holds. This concludes the inductive construction.

  Finally, note that item~\ref{it:measure} implies that
  \begin{equation*}
    \lim_{n\to\infty} \mu_{\phi_n[c]}(\cC)
    \geq
    1 - \lim_{n\to\infty} (1-\mu_{\phi_0[c]}(\cC))\cdot(1-2^{-B})^n
    =
    1,
  \end{equation*}
  which along with items~\ref{it:imcF} and~\ref{it:extends} implies $c\ll_{\cF}\cC$.
\end{proof}

\section{Inequalities from percolation}
\label{sec:ineqsperc}

This section collects inequalities that can be derived from folds and combinatorial properties that
can be derived when such inequalities hold with equality instead.

\begin{lemma}\label{lem:CS}
  Let $H=(G,c)$ be a finite colored bigraph, let $(f,L)\in\Fold(G)$ be a fold of $G$, let $H'\df
  H\rest_{\Fix(f)}$, let $\theta\colon[\lvert\Fix(f)\rvert]\to V(G)$ be an injection with
  $\im(\theta)=\Fix(f)$ and let
  \begin{align*}
    F & \df (H',\theta), &
    F_1 & \df (H\rest_{L\cup\Fix(f)},\theta) - E(H'), &
    F_2 & \df (H\rest_{f(L)\cup\Fix(f)},\theta) - E(H'),
  \end{align*}

  Then for every sequence $W = (W_i)_{i\in C_c}$ of bigraphons
  $W_i\colon\Omega\times\Lambda\to\RR_+$, we have
  \begin{equation}\label{eq:CS}
    t(H,W) \leq \sqrt{t((G,c\comp f_L),W)\cdot t((G,c\comp f_L^*),W)},
  \end{equation}
  with equality if and only if there exist $\lambda_1,\lambda_2\in\RR_+$ not both zero such that
  \begin{equation*}
    \lambda_1\cdot t(F_1,W)(x,y)
    =
    \lambda_2\cdot t(F_2,W)(x,y)
  \end{equation*}
  for $\mu\otimes\nu$-almost every $(x,y)$ such that $t(F,W)(x,y) > 0$.

  Furthermore, \eqref{eq:CS} holds with equality for every sequence $W$ if and only if
  $F_1\rest_{C(F_1)}\cong F_2\rest_{C(F_2)}$.
\end{lemma}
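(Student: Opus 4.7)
The plan is to decompose $t(H,W)$ using that $\Fix(f)$ separates $L$ from $f(L)$. I would group the edges of $G$ into three classes -- those lying entirely in $\Fix(f)$, those with at least one endpoint in $L$, and those with at least one endpoint in $f(L)$ -- hold the variables indexed by $\Fix(f)$ fixed, and integrate out the others. Since no edge crosses between $L$ and $f(L)$, the inner integrals over the $L$- and $f(L)$-sides factor, and the three classes contribute exactly the factors $t(F,W)(x,y)$, $t(F_1,W)(x,y)$, and $t(F_2,W)(x,y)$, giving
\begin{equation*}
  t(H,W)
  =
  \int_{X^{T_1(F)}\times Y^{T_2(F)}}
  t(F,W)(x,y)\cdot t(F_1,W)(x,y)\cdot t(F_2,W)(x,y)
  \, d(\mu\otimes\nu)(x,y).
\end{equation*}

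Next I would apply the Cauchy--Schwarz Inequality to the pair $\sqrt{t(F,W)}\cdot t(F_1,W)$ and $\sqrt{t(F,W)}\cdot t(F_2,W)$. To recognize the resulting factors as $t((G,c\comp f_L),W)$ and $t((G,c\comp f_L^*),W)$, I would use the automorphism $f$: since $c\comp f_L$ overwrites the colors on the $f(L)$-side edges by the colors of their $L$-side mirrors, the measure-preserving substitution $x'_v \df x_{f(v)}$ for $v\in f(L)\cap V_1$ (and similarly for $y$) turns $t((G,c\comp f_L),W)$ into two independent copies of the $L$-side integral glued along the common $\Fix(f)$ variables, which equals $\int t(F,W)\cdot t(F_1,W)^2\, d(\mu\otimes\nu)$. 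A symmetric computation handles the $f_L^*$ factor, and together these yield~\eqref{eq:CS}.

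The equality condition then follows from the standard Cauchy--Schwarz equality case: equality forces $\lambda_1\sqrt{t(F,W)}\cdot t(F_1,W) = \lambda_2\sqrt{t(F,W)}\cdot t(F_2,W)$ a.e.\ for some $\lambda_1,\lambda_2\in\RR_+$ not both zero, and dividing by $\sqrt{t(F,W)}$ on the set where it is positive (the complementary set contributing trivially $0 = 0$) yields the stated criterion.

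For the ``furthermore'' assertion, I would first note that $F_1$ and $F_2$ have the same type: both use the injection $\theta$ and both have no edges among $\Fix(f)$ since $E(H')$ was deleted, so the relabeling map $\eta$ in Proposition~\ref{prop:coreiso} is the identity. If $F_1\rest_{C(F_1)}\cong F_2\rest_{C(F_2)}$, then Lemma~\ref{lem:flagiso} gives $t(F_1\rest_{C(F_1)},W) = t(F_2\rest_{C(F_2)},W)$ pointwise, and since $t(F_i,W)$ equals this times the scalar $t(H(F_i)-C(F_i),W)$, the two functions $t(F_1,W)$ and $t(F_2,W)$ are proportional, so the equality criterion holds for every $W$. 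Conversely, given equality in~\eqref{eq:CS} for every $W$, I would apply the criterion to strictly positive bigraphons: then $t(F,W) > 0$ everywhere, so the criterion becomes linear dependence of $t(F_1,W)$ and $t(F_2,W)$ a.e., and invoking the implication \ref{prop:coreiso:homae}$\implies$\ref{prop:coreiso:iso} of Proposition~\ref{prop:coreiso} (whose proof explicitly reduces to strictly positive bigraphons via the $W_\epsilon \df \max\{W,\epsilon\}$ approximation) concludes $F_1\rest_{C(F_1)}\cong F_2\rest_{C(F_2)}$. The main delicate point I expect will be precisely this bridging step -- passing from linear dependence only on the support of $t(F,W)$ to the unconditional a.e.\ hypothesis of Proposition~\ref{prop:coreiso} -- which is why restricting to strictly positive $W$ is essential.
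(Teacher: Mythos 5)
Your proposal is correct and follows essentially the same route as the paper: the same three-way edge decomposition over the $\Fix(f)$ variables, the same Cauchy--Schwarz application with the factors identified via the automorphism $f$, the same treatment of the equality case through the weighted measure $t(F,W)\,d(\mu\otimes\nu)$, and the same reduction of the ``furthermore'' part to Proposition~\ref{prop:coreiso} by passing to strictly positive bigraphons (where $t(F,W)>0$ everywhere) and extending to general $W$ by dominated convergence. Your extra remark that $F_1$ and $F_2$ have the same type with $\eta=\id$ is a detail the paper leaves implicit, and it is handled correctly.
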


Before we start the proof, we point out that all statements of this lemma except for the final part
(of equality for every sequence $W$) are standard facts from (bi)graphon theory.

\begin{proof}
  For the inequality~\eqref{eq:CS}, note that
  \begin{align*}
    t(H,W)
    & =
    \int_{X^{T_1(F_1)}\times Y^{T_2(F_1)}} t(F_1,W)(x,y)\cdot t(F_2,W)(x)\cdot t(F,W)(x,y)
    \ d(\mu\otimes\nu)(x,y)
    \\
    & \leq
    \begin{multlined}[t]
      \Biggl(
      \int_{X^{T_1(F_1)}\times Y^{T_2(F_1)}} t(F_1,W)(x,y)^2\cdot t(F,W)(x,y)
      \ d(\mu\otimes\nu)(x,y)
      \\
      \cdot
      \int_{X^{T_1(F_1)}\times Y^{T_2(F_1)}} t(F_2,W)(x,y)^2\cdot t(F,W)(x,y)
      \ d(\mu\otimes\nu)(x,y)
      \Biggr)^{1/2}
    \end{multlined}
    \\
    & =
    \sqrt{t((G,c\comp f_L),W)\cdot t((G,c\comp f_L^*),W)},
  \end{align*}
  where the inequality is Cauchy--Schwarz Inequality. Furthermore, equality holds if and only if
  there exist $\lambda_1,\lambda_2\in\RR$ not both zero such that
  \begin{equation}\label{eq:ld}
    \lambda_1\cdot t(F_1,W)(x,y)
    =
    \lambda_2\cdot t(F_2,W)(x,y)
  \end{equation}
  for almost every $(x,y)$ with respect to the measure $\pi$ given by $d\pi(x,y) =
  t(F,W)(x,y)\ d(\mu\otimes\nu)(x,y)$, which means that~\eqref{eq:ld} holds for
  $\mu\otimes\nu$-almost every $(x,y)$ such that $t(F,W)(x,y) > 0$. Since densities are
  non-negative, by possibly multiplying the $\lambda_i$ by $-1$, we may assume that they are both
  non-negative.

  \medskip

  Finally, for the statement that equality in~\eqref{eq:CS} for every sequence $W$ is equivalent to
  $F_1\rest_{C(F_1)}\cong F_2\rest_{C(F_2)}$, we first note that by considering strictly positive
  bigraphons and using the Dominated Convergence Theorem, equality in~\eqref{eq:CS} for every
  sequence $W$ is equivalent to the existence of $\lambda_1,\lambda_2\in\RR_+$ (depending on $W$)
  not both zero such that~\eqref{eq:ld} holds for $\mu\otimes\nu$-almost every $(x,y)$ (without the
  condition $t(F,W)(x,y)>0$), which in turn by Proposition~\ref{prop:coreiso} is equivalent to
  $F_1\rest_{C(F_1)}\cong F_2\rest_{C(F_2)}$.
\end{proof}

The next lemma is also a standard fact of (bi)graphon theory.

\begin{lemma}\label{lem:CStree}
  Let $G$ be a finite bigraph, let $c\colon E(G)\to C$ be a coloring of $G$ and let $\phi\colon
  T\to\Fold(G)$ be a finite folding tree of $G$. Then for every sequence $W=(W_i)_{i\in C}$ of
  bigraphons $W_i\colon\Omega\times\Lambda\to\RR_+$, we have
  \begin{equation*}
    t((G,c),W) \leq t(\phi[c],W).
  \end{equation*}
\end{lemma}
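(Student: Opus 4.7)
My plan is to induct on $\lvert T\rvert$, using Lemma~\ref{lem:CS} to peel off the root of $\phi$ and then applying the inductive hypothesis to the two subtrees. The base case $T=\varnothing$ is immediate: $T\up=\{\varnothing\}$ forces $\phi[c]$ to be the singleton assignment $\varnothing\mapsto c$, so $t(\phi[c],W)=t((G,c),W)^1=t((G,c),W)$ and the inequality holds with equality.

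For the inductive step, suppose $T\neq\varnothing$ and let $(f,L)\df\phi(\varnothing)$. I would split $\phi$ into two smaller folding trees $\phi_i\colon T_i\to\Fold(G)$ defined by $T_i\df\{\tau\in[2]^{<\omega}\mid i\cdot\tau\in T\}$ and $\phi_i(\tau)\df\phi(i\cdot\tau)$ for $i\in[2]$, and set $c_1\df c\comp f_L$ and $c_2\df c\comp f_L^*$. Unwinding the recursive definition of $\phi[c]$, one verifies that $\phi[c](i\cdot\tau)=\phi_i[c_i](\tau)$ for every $i\in[2]$ and $\tau\in T_i\up$, and that
\[
  L(T\up) = \{1\cdot\tau\mid\tau\in L(T_1\up)\}\cup\{2\cdot\tau\mid\tau\in L(T_2\up)\}
\]
as a disjoint union. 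Combined with $\lvert i\cdot\tau\rvert=\lvert\tau\rvert+1$, this yields
\[
  \mu_{\phi[c]}(\{c'\}) = \tfrac{1}{2}\bigl(\mu_{\phi_1[c_1]}(\{c'\})+\mu_{\phi_2[c_2]}(\{c'\})\bigr)
\]
for every coloring $c'$ of $G$, and therefore $t(\phi[c],W) = \sqrt{t(\phi_1[c_1],W)\cdot t(\phi_2[c_2],W)}$.

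With this factorization in place, the inequality will follow by chaining Lemma~\ref{lem:CS} applied to the fold $(f,L)$ (which gives $t((G,c),W)\leq\sqrt{t((G,c_1),W)\cdot t((G,c_2),W)}$) with the inductive hypothesis applied to $\phi_1$ and $\phi_2$ (which gives $t((G,c_i),W)\leq t(\phi_i[c_i],W)$ for $i\in[2]$). I do not anticipate any serious obstacle: once Lemma~\ref{lem:CS} is granted, the remainder is entirely bookkeeping on the recursive structure of $T\up$ and the definition of $\mu_{\phi[c]}$, and the only point requiring a moment of care is the disjoint-union decomposition of $L(T\up)$ above, which is what makes the exponents in $t(\phi[c],W)$ line up with the two square roots produced by Cauchy--Schwarz.
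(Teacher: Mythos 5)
Your proof is correct; the only point needing care, the identity $\phi[c](i\cdot\tau)=\phi_i[c_i](\tau)$ and the disjoint decomposition of $L(T\up)$, does hold (note that in $T\up$ every internal node has both children, so $\mu_{T\up}$ really is a probability measure and splits as claimed), and from there the chain of inequalities goes through. The route differs from the paper's only in how the induction is organized: you induct on $\lvert T\rvert$ and peel off the \emph{root}, establishing the factorization $t(\phi[c],W)=\sqrt{t(\phi_1[c_1],W)\,t(\phi_2[c_2],W)}$ and applying Lemma~\ref{lem:CS} once at the top, whereas the paper inducts on the height $h(T)$, sets $T'\df T\rest_{h(T)-1}$ and $\psi\df\phi\rest_{h(T)-1}$, applies the inductive hypothesis to the truncated tree, and then invokes Lemma~\ref{lem:CS} at each node of $L(T'\up)\setminus L(T\up)$, i.e.\ it peels off the \emph{deepest level}. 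Both arguments rest on the same key lemma and comparable bookkeeping; your top-down version buys a clean recursive identity for $t(\phi[c],W)$ but requires re-rooting the induced coloring trees at $c_1=c\comp f_L$ and $c_2=c\comp f_L^*$, while the paper's bottom-up version keeps the single initial coloring $c$ throughout and only compares a leaf's label with its two children's labels, at the cost of applying Cauchy--Schwarz at many nodes in one step. Either way the statement follows, so there is no gap to report.
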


\begin{proof}
  The proof is by induction on the height $h(T)$ of $T$.

  If $T$ is empty (i.e., $h(T)=-\infty$), then $T\up = \{\varnothing\}$, $\phi[c](\varnothing)=c$
  and $\mu_{\phi[c]}$ is the Dirac probability measure concentrated on $c$, so we have $t(\phi[c],W)
  = t((G,c),W)$.

  If $T=\{\varnothing\}$ (i.e., $h(T)=0$), then $T\up = \{1,2\}$ and
  \begin{align*}
    \phi[c](1) & = c\comp\phi_\varnothing, &
    \phi[c](2) & = c\comp\phi_\varnothing^*,
  \end{align*}
  for the folding maps $\phi_\varnothing$ and $\phi_\varnothing^*$ corresponding to the fold
  $\phi(\varnothing)$. Thus, we have
  \begin{equation*}
    t(\phi[c],W) = \sqrt{t((G,c\comp\phi_\varnothing),W)\cdot t((G,c\comp\phi_\varnothing^*),W)}
    \geq
    t((G,c),W),
  \end{equation*}
  where the inequality follows from Lemma~\ref{lem:CS}.

  If $h(T) > 0$, then we let $T'\df T\rest_{h(T)-1}$ and $\psi\df\phi\rest_{h(T)-1}$ and note that
  \begin{align*}
    t((G,c),W)
    & \leq
    t(\psi[c],W)
    =
    \sum_{\eta\in L(T'\up)} t(\psi[c](\eta),W)^{-\lvert\eta\rvert}
    \\
    & =
    \sum_{\eta\in L(T'\up)\cap L(T\up)} t(\phi[c](\eta),W)^{-\lvert\eta\rvert}
    + \sum_{\eta\in L(T'\up)\setminus L(T\up)} t(\phi[c](\eta),W)^{-\lvert\eta\rvert}
    \\
    & \leq
    \begin{multlined}[t]
      \sum_{\eta\in L(T'\up)\cap L(T\up)} t(\phi[c](\eta),W)^{-\lvert\eta\rvert}
      \\
      +
      \sum_{\eta\in L(T'\up)\setminus L(T\up)}
      \left(
      t(\phi[c](\eta)\comp\phi_\eta,W)\cdot t(\phi[c](\eta)\comp\phi_\eta^*,W)
      \right)^{-\lvert\eta\rvert/2}
    \end{multlined}
    \\
    & =
    \sum_{\eta\in L(T\up)} t(\phi[c](\eta),W)^{-\lvert\eta\rvert}
    \\
    & =
    t(\phi[c],W),
  \end{align*}
  where the first inequality follows by induction, the second inequality follows from
  Lemma~\ref{lem:CS} and the second to last equality follows since
  \begin{equation*}
    L(T\up)\setminus L(T'\up)
    =
    \{\eta\cdot i \mid \eta\in L(T'\up)\setminus L(T\up)\land i\in [2]\}.
    \qedhere
  \end{equation*}
\end{proof}

The next lemma is a small generalization of~\cite[Theorem~3.2]{CL17} that covers $c\ll_\cF\cC$ when
$\cC$ is not necessarily the set of monochromatic colorings.

\begin{lemma}\label{lem:CSll}
  If $(G,c,\cC,\cF)$ is a finite folding problem with $c\ll_{\cF}\cC$, then there exists a
  function $\alpha\colon\cC\to\RR_+$ with $\sum_{c'\in\cC} \alpha(c') = 1$
  such that for every sequence $W=(W_i)_{i\in C}$ of bigraphons
  $W_i\colon\Omega\times\Lambda\to\RR_+$, we have
  \begin{equation*}
    t((G,c),W) \leq \prod_{c'\in\cC} t((G,c'),W)^{\alpha(c')}.
  \end{equation*}
\end{lemma}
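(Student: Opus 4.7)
The plan is to apply Lemma~\ref{lem:CStree} to each folding tree in a sequence $(\phi_n)_{n\in\NN}$ witnessing $c\ll_{\cF}\cC$, and then extract the exponents $\alpha(c')$ via a compactness argument on the probability measures $\mu_{\phi_n[c]}$.

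First I would observe that every coloring in $\im(\phi_n[c])$ has the form $c\comp g$ for some composition $g$ of folding maps, so its image is contained in $\im(c)$; consequently every $\mu_{\phi_n[c]}$ is supported on the finite set $S\df\{c'\in C^{E(G)}\mid\im(c')\subseteq\im(c)\}$, whose finiteness comes from $G$ being finite. Lemma~\ref{lem:CStree} then yields
\begin{equation*}
t((G,c),W) \leq \prod_{c'\in S} t((G,c'),W)^{\mu_{\phi_n[c]}(\{c'\})}
\end{equation*}
for every sequence $W=(W_i)_{i\in C}$ of bigraphons. Using compactness of $[0,1]^S$, I would pass to a subsequence along which $\mu_{\phi_n[c]}(\{c'\})\to\alpha(c')$ for every $c'\in S$. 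These limits are non-negative, sum to $1$, and since $\mu_{\phi_n[c]}(\cC)\to 1$ by hypothesis, $\alpha$ is concentrated on $\cC\cap S$; extending by $0$ outside this set gives the desired $\alpha\colon\cC\to\RR_+$ with $\sum_{c'\in\cC}\alpha(c')=1$.

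To take the limit in the product, I would first treat the strictly positive case. Replacing $W$ by the regularization $W^\epsilon\df(\max(W_i,\epsilon))_{i\in C}$, every $t((G,c'),W^\epsilon)$ is strictly positive; taking logarithms in the inequality above turns the right-hand side into a finite sum over $c'\in S$ of $\mu_{\phi_n[c]}(\{c'\})\log t((G,c'),W^\epsilon)$. Since $S$ is finite and the logarithms are fixed real numbers, I can pass to the limit termwise along the chosen subsequence and exponentiate to obtain
\begin{equation*}
t((G,c),W^\epsilon)\leq\prod_{c'\in\cC}t((G,c'),W^\epsilon)^{\alpha(c')}.
\end{equation*}

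Finally, letting $\epsilon\to 0^+$, the Dominated Convergence Theorem gives $t((G,c'),W^\epsilon)\to t((G,c'),W)$ for each $c'$, and continuity of $x\mapsto x^{\alpha(c')}$ at $0$ from the right (with the convention $0^0=1$, which is consistent since then the factor equals $1$ for all $\epsilon$) yields the inequality for the original $W$. The main subtlety will be exactly ensuring this last limit is well-behaved when some $t((G,c'),W)$ vanishes, which the $\epsilon$-regularization resolves cleanly.
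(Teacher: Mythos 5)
Your proposal is correct and follows essentially the same route as the paper: apply Lemma~\ref{lem:CStree} to a witnessing sequence of folding trees, note the measures $\mu_{\phi_n[c]}$ live on the finite set of colorings with image in $\im(c)$, extract a convergent subsequence by compactness to define $\alpha$, and pass to the limit in the resulting finite product. Your $\epsilon$-regularization and logarithm step is just a more careful handling of the limit when some densities vanish (the paper takes this limit directly), not a different argument.
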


\begin{proof}
  Let $(\phi_n)_{n\in\NN}$ be a sequence of folding trees of $G$ witnessing $c\ll_{\cF}\cC$. Note
  that the measures $\mu_{\phi_n[c]}$ are supported on the set
  \begin{equation*}
    \cR \df \{r\in C_c^{E(G)} \mid \im(r)\subseteq\im(c)\},
  \end{equation*}
  which is finite since $G$ is finite. Since the set of probability measures supported on $\cR$ is
  compact (in the weak convergence of measures topology), there exists a convergent subsequence
  $(\mu_{\phi_{n_k}[c]})_{k\in\NN}$ that converges to some measure $\nu$ also supported on
  $\cR$. Furthermore, since $\lim_{n\to\infty} \mu_{\phi_n[c]}(\cC)=1$, it follows that
  $\nu(\cC)=1$.

  For each $c'\in\cC$, let $\alpha(c')\df\nu(\{c'\})$ and note that
  $\sum_{c'\in\cC}\alpha(c')=\nu(\cC)=1$. By Lemma~\ref{lem:CStree}, for each $k\in\NN$, we have
  \begin{equation*}
    t((G,c),W) \leq t(\phi_{n_k}[c],W) = \prod_{r\in\cR} t((G,r),W)^{\mu_{\phi_{n_k}[c]}(\{r\})}
  \end{equation*}
  and since the right-hand side (is a finite product and) converges to
  \begin{equation*}
    \prod_{c'\in\cC\cap\cR} t((G,c'),W)^{\alpha(c')}\cdot\prod_{r\in\cR\setminus\cC} t((G,r),W)^0
    =
    \prod_{c'\in\cC} t((G,c'),W)^{\alpha(c')},
  \end{equation*}
  the result follows.
\end{proof}

\section{Fold-stability}
\label{sec:foldstability}

In this section, we establish all implications (under appropriate hypotheses) between the several
variants of fold-stability and $\preceq_\cF$-maximality.

\begin{lemma}\label{lem:foldstable}
  Let $G$ be a bigraph, let $\cF,\cF'\subseteq\Fold(G)$ be sets of folds of $G$ and let $K$ be a
  subgroup of the automorphism group $\Aut(G)$ of $G$. Then the following hold for a coloring
  $c\colon E(G)\to C$ of $G$:
  \begin{enumerate}
  \item\label{lem:foldstable:strong} If $(h,L),(f,L)\in\Fold(G)$ are folds of $G$ with the same $L$
    and $\Fix(h)=\Fix(f)$ and $h\in\Aut((G,c))$, then $c\comp f_L\comp h_L = c$ and
    $(G,c)\cong(G,c\comp f_L)$.
  \item\label{lem:foldstable:reachability} If $\cF$ is $K$-invariant and closed under dual folds and
    $c$ is $(K,\cF)$-fold-stable, then for every $c'\colon E(G)\to C$ with $c\preceq_{\cF} c'$, $c'$ is
    $(K,\cF)$-fold-stable and $(G,c')$ is isomorphic to $(G,c)$ under a bijection in $K$.
  \item\label{lem:foldstable:reachabilitystrong} If $\cF$ and $\cF'$ are $K$-invariant, $\cF$ is
    closed under dual folds and $c$ is $\cF'$-strongly $(K,\cF)$-fold-stable, then for every
    $c'\colon E(G)\to C$ with $c\preceq_{\cF} c'$, $c'$ is $\cF'$-strongly $(K,\cF)$-fold-stable.
  \item\label{lem:foldstable:maximal} If $\cF$ is $K$-invariant and closed under dual folds and $c$
    is $\cF$-strongly $(K,\cF)$-fold-stable, then $c$ is $\preceq_{\cF}$-maximal.
  \item\label{lem:foldstable:CSequal->strong} If $G$ is finite and connected and $(f,L)\in\Fold(G)$
    is a fold such that for every sequence $W = (W_i)_{i\in C}$ of bigraphons
    $W_i\colon\Omega\times\Lambda\to\RR_+$, we have
    \begin{equation}\label{eq:CSequal}
      t((G,c),W) = \sqrt{t((G,c\comp f_L),W)\cdot t((G,c\comp f_L^*),W)},
    \end{equation}
    then $c$ is strongly $\{(f,L)\}$-fold-stable.
  \item\label{lem:foldstable:weak->strong} If $G$ is finite and connected, $\cF$ is closed under
    dual folds and $c$ is $(K,\cF)$-fold-stable, then $c$ is strongly $(K,\cF)$-fold-stable.
  \item\label{lem:foldstable:symmfoldstable} If there exists a rainbow coloring $r\colon E(G)\to C$
    such that $r\preceq_{\cF} c$, $\cF$ is $K$-invariant and closed under dual folds, $G$ is
    $K'$-edge-transitive, where $K'$ is the subgroup of $\Aut(G)$ generated by $\{f \mid
    (f,L)\in\cF\}$, and $c$ is $\cF'$-strongly $(K,\cF)$-fold-stable, then $c$ is
    $\cF'$-symmetrically $(K,\cF)$-fold-stable.
  \end{enumerate}
\end{lemma}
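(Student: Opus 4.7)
Since $c$ is $\cF'$-strongly $(K,\cF)$-fold-stable by hypothesis, it suffices to verify the color-transitivity part of the definition: for every $i,j\in\im(c)$, one must produce $g\in K$ and a permutation $\sigma$ of $C$ with $\sigma(i)=j$ and $(\sigma\comp c)\comp g = c$. Fix such $i,j$ and choose edges $e_i,e_j\in E(G)$ with $c(e_i)=i$, $c(e_j)=j$. By $K'$-edge-transitivity, pick $g'\in K'$ with $g'(e_j)=e_i$ and decompose $g' = f_1\comp\cdots\comp f_n$ as a product of generators of $K'$, each arising from some fold $(f_k,L_k)\in\cF$. The main construction is a composition of folding maps $\Phi=(f_1)_{L_1'}\comp\cdots\comp(f_n)_{L_n'}$ that realizes the action of $g'$ along the trajectory $e_j\to f_n(e_j)\to\cdots\to e_i$: at each step, since $\Fix(f_k)$ separates $L_k$ from $f_k(L_k)$, the current edge sits on one side of the cut, and by closure of $\cF$ under dual folds one may choose $L_k'\in\{L_k,f_k(L_k)\}$ so that $(f_k)_{L_k'}$ coincides with $f_k$ on that edge. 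Thus $\Phi(e_j)=e_i$.

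Applying Lemma~\ref{lem:foldstable:reachability} iteratively along this composition --- each intermediate coloring is reachable from $c$, hence itself $(K,\cF)$-fold-stable, so each folding step can be replaced (as an operation on the coloring) by precomposition with the inverse of a suitable element of $K$ --- yields $\tilde g\in K$ with $c\comp\Phi = c\comp\tilde g^{-1}$. Setting $g=\tilde g^{-1}\in K$, we have $c(g(e_j)) = c(\Phi(e_j)) = c(e_i) = i$, so once $g$ is shown to permute the color partition $\Pi_c\df\{c^{-1}(k) : k\in\im(c)\}$, the induced permutation $\sigma$ of $\im(c)$ automatically satisfies $\sigma(i)=j$ and $g$ becomes the required isomorphism $(G,c)\to(G,\sigma\comp c)$.

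The hard part is thus to verify that $g$ permutes $\Pi_c$. Writing $c = r\comp\phi$ from the rainbow reach $r\preceq_{\cF}c$, the equation $c\comp g = c\comp\Phi$ together with injectivity of $r$ yields $\phi\comp g = \phi\comp\Phi$, so partition-preservation reduces to showing that $\phi\comp\Phi$ factors through $\phi$. I expect this step to exploit the $\cF'$-strong $(K,\cF)$-fold-stability in an essential way: for each $(f_k,L_k)$ it furnishes a color-preserving involution $h_k\in\Aut((G,c))$ with $\Fix(h_k)=\Fix(f_k)$ and $(h_k,L_k)\in\cF'$, and the identity $c\comp(f_k)_{L_k}\comp(h_k)_{L_k}=c$ from item~\ref{lem:foldstable:strong} provides a rigidity that should propagate through the composition and force the interaction of $\Phi$ with $\phi$ to respect $\phi$-fibers (which are precisely the color classes of $c$). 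This last verification --- threading color-preserving moves through the fold composition while preserving the rainbow-induced fiber structure --- is where I expect the main technical difficulty to lie.
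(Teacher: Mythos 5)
Your proposal covers only item~\ref{lem:foldstable:symmfoldstable}; items~\ref{lem:foldstable:strong}--\ref{lem:foldstable:weak->strong} are never argued (invoking item~\ref{lem:foldstable:reachability} as an ingredient is fine, since the paper also proves the items in order, but those earlier items still require proofs). More importantly, the argument you give for item~\ref{lem:foldstable:symmfoldstable} has a genuine gap, and you flag it yourself: the whole content of the claim is that the coloring produced by your fold composition has the form $\sigma\comp c$, and your reduction of this to ``$\phi\comp\Phi$ factors through $\phi$'' is exactly the step you do not prove. The hoped-for mechanism (rigidity propagated from the inverse folds supplied by $\cF'$-strong fold-stability via item~\ref{lem:foldstable:strong}) is not a proof, and it is also not how the paper argues: strong fold-stability plays no role in the color-transitivity part of the paper's proof, it is merely carried along as part of the hypothesis.

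The missing idea is the order in which the folding maps are composed. The paper writes $c=r\comp(f_1)_{L_1}\comp\cdots\comp(f_n)_{L_n}$ (possible since $r\preceq_{\cF}c$ and $\cF$ is closed under duals), lets $e_i$ be the \emph{unique} edge with $r(e_i)=i$, picks $e_j$ with $c(e_j)=j$, uses $K'$-edge-transitivity and duals to choose $(f'_1,L'_1),\ldots,(f'_m,L'_m)\in\cF$ with $((f'_1)_{L'_1}\comp\cdots\comp(f'_m)_{L'_m})(e_i)=e_j$, and then sets $c'\df c\comp(f'_1)_{L'_1}\comp\cdots\comp(f'_m)_{L'_m}\comp(f_1)_{L_1}\comp\cdots\comp(f_n)_{L_n}$, i.e.\ it appends the rainbow-to-$c$ folding composition on the right of the edge-transitivity composition. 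Because $r$ is injective, $c(e)=c(e')$ forces $((f_1)_{L_1}\comp\cdots\comp(f_n)_{L_n})(e)=((f_1)_{L_1}\comp\cdots\comp(f_n)_{L_n})(e')$, so $c'$ is \emph{automatically} constant on the color classes of $c$, and the class of color $i$ is sent to $c(e_j)=j$; hence $c'=\sigma\comp c$ with $\sigma(i)=j$, and since $c\preceq_{\cF}c'$, item~\ref{lem:foldstable:reachability} gives in one shot an isomorphism in $K$ from $(G,c)$ to $(G,\sigma\comp c)$. In your version $\Phi$ is only the edge-transitivity composition (run in the opposite direction, $e_j\mapsto e_i$, which is immaterial), and there is no reason for $c\comp\Phi$ to respect the $c$-fibers; the device that makes the fiber condition automatic is precisely the extra composition with the rainbow-derived folding maps, which you never introduce. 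Without it, the ``hard part'' you defer does not appear to be recoverable from the stability items alone, so the proof is incomplete at its crucial step.
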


\begin{proof}
  For item~\ref{lem:foldstable:strong}, since every edge of $G$ is either contained in
  $L\cup\Fix(h)=L\cup\Fix(f)$ or contained in $h(L)\cup\Fix(h)=f(L)\cup\Fix(f)$, we have
  \begin{align*}
    (c\comp f_L\comp h_L)(u,v)
    & =
    \begin{dcases*}
      (c\comp f_L)(u,v), & if $u,v\in L\cup\Fix(h)$,\\
      (c\comp f_L\comp h)(u,v), & if $u,v\in h(L)\cup\Fix(h)$,
    \end{dcases*}
    \\
    & =
    \begin{dcases*}
      c(u,v), & if $u,v\in L\cup\Fix(h)$,\\
      c(h(u),h(v)), & if $u,v\in h(L)\cup\Fix(h)$,
    \end{dcases*}
    \\
    & =
    c(u,v),
  \end{align*}
  where the second equality follows since $h$ is an involution and the third equality follows since
  $h$ preserves the coloring $c$.

  We now show that $(G,c)\cong (G,c\comp f_L)$. Define the function $g\colon V(G)\to V(G)$ by
  \begin{equation*}
    g(v) \df
    \begin{dcases*}
      v, & if $v\in L\cup\Fix(f)$,\\
      (f\comp h)(v), & if $v\in f(L)$.
    \end{dcases*}
  \end{equation*}
  Note that since $\Fix(f)=\Fix(h)$, we also have $g(v) = (f\comp h)(v)$ for $v\in\Fix(f)$.

  We claim that $g$ is an isomorphism from $(G,c)$ to $(G,c\comp f_L)$. Since both $f$ and $h$ are
  bijective and $(f\comp h\comp f)(L)=f(L)$, it is clear that $g$ is bijective.

  To see that $g$ preserves edges, we simply note that every edge of $G$ is either contained in
  $L\cup\Fix(f)$, in which case it is preserved as $g$ acts identically on this set; or is contained
  in $f(L)\cup\Fix(f)$, in which case it is preserved as $g$ acts as $f\comp h$ on this set.

  To see that $g$ also preserves non-edges, a similar argument shows that $g$ preserves any non-edge
  that is contained in $L\cup\Fix(f)$ or contained in $f(L)\cup\Fix(f)$; since $\Fix(f)$ is a cut of
  $G$, all remaining non-edges amount to \emph{all} elements of
  \begin{equation*}
    \Bigl((V_1(G)\cap L)\times (V_2(G)\cap f(L))\Bigr)
    \cup
    \Bigl((V_1(G)\cap f(L))\times (V_2(G)\cap L)\Bigr).
  \end{equation*}
  Since $(f\comp h\comp f)(L)=f(L)$, it follows that the set above is fixed (as a set) by $g$, hence
  $g$ also preserves non-edges.

  Let us now show that $g$ respects colorings. Note that if $(u,v)\in E(G)$, then either $u,v\in
  L\cup\Fix(f)$ or $u,v\in f(L)\cup\Fix(f)$, so we get
  \begin{align*}
    (c\comp f_L\comp g)(u,v)
    & =
    \begin{dcases*}
      (c\comp f_L)(u,v), & if $u,v\in L\cup\Fix(f)$,\\
      (c\comp f_L\comp f\comp h)(u,v), & if $u,v\in f(L)\cup\Fix(f)$,
    \end{dcases*}
    \\
    & =
    \begin{dcases*}
      c(u,v), & if $u,v\in L\cup\Fix(f)$,\\
      (c\comp f\comp f\comp h)(u,v), & if $u,v\in f(L)\cup\Fix(f)$,
    \end{dcases*}
    \\
    & =
    c(u,v),
  \end{align*}
  where the second equality follows since $(f\comp h)(f(L)\cup\Fix(f))=f(L)\cup\Fix(f)$ and $f_L$
  acts as $f$ on this set and the third equality follows since $f$ is an involution and $h$ is
  preserves $c$.

  \medskip

  For item~\ref{lem:foldstable:reachability}, first note that since $\cF$ is closed under dual
  folds, every $c'\colon E(G)\to C$ with $c\preceq_{\cF} c'$ can be written as
  \begin{equation}\label{eq:c'}
    c' = c\comp (f_1)_{L_1}\comp\cdots\comp (f_n)_{L_n}
  \end{equation}
  for some sequence of folds $(f_1,L_1),\ldots,(f_n,L_n)\in\cF$. Let us show by induction in $n$
  that such $c'$ is $(K,\cF)$-fold-stable and $(G,c)$ and $(G,c')$ are isomorphic under a bijection
  in $K$. For $n=0$, this is trivial. For $n > 0$, by inductive hypothesis, we know that
  \begin{equation}\label{eq:c''}
    c'' \df c\comp (f_1)_{L_1}\comp\cdots\comp (f_{n-1})_{L_{n-1}}
  \end{equation}
  is $(K,\cF)$-fold-stable and satisfies $(G,c)\cong (G,c'')$ under a bijection in $K$. Since
  $c'=c''\comp (f_n)_{L_n}$, by $(K,\cF)$-fold-stability of $c''$, we get $(G,c')\cong (G,c'')$
  under a bijection in $K$, so we also get $(G,c')\cong (G,c)$ under a bijection in $K$. Let then
  $h\in K$ be an isomorphism from $(G,c')$ to $(G,c)$ and note that if $(f,L)\in\cF$, then
  \begin{equation*}
    c'\comp f_L
    =
    (c\comp h)\comp f_L
    =
    c\comp(h\comp f_L\comp h^{-1})\comp h
    =
    c\comp(h\cdot f\comp h^{-1})_{h(L)}\comp h,
  \end{equation*}
  where the last equality follows from Lemma~\ref{lem:action}. Hence $h$ is an isomorphism from
  $(G,c'\comp f_L)$ to $(G,c\comp f'_{L'})$ for the fold $(f',L')\df h\cdot (f,L)$. Since $\cF$ is
  $K$-invariant, we have $f'\in\cF$, so $(K,\cF)$-fold-stability of $c$ yields $(G,c\comp
  f'_{L'})\cong (G,c)$ under a bijection in $K$, from which we conclude that $(G,c'\comp f_L)\cong
  (G,c')$ under a bijection in $K$, so $c'$ is $(K,\cF)$-fold-stable.

  \medskip

  For item~\ref{lem:foldstable:reachabilitystrong}, clearly $(K,\cF)$-fold-stability follows from
  item~\ref{lem:foldstable:reachability}. Fix then $(f',L')\in\cF$ and let us show that there exists
  $(h',L')\in\cF'$ with $\Fix(h')=\Fix(f')$ and $h\in\Aut((G,c'))$.

  By item~\ref{lem:foldstable:reachability}, there exists $g\in K$ that is an isomorphism from
  $(G,c)$ to $(G,c')$ and since $\cF$ is $K$-invariant, the fold
  \begin{equation*}
    (f,L)
    \df
    g^{-1}\cdot(f',L')
    =
    (g^{-1}\comp f'\comp g, g^{-1}(L'))
  \end{equation*}
  is in $\cF$, so $\cF'$-strong $(K,\cF)$-fold-stability of $c$ yields an $h\in\Aut((G,c))$ such
  that $(h,L)\in\cF'$ and $\Fix(h)=\Fix(f)$. Let $h'\df g\comp h\comp g^{-1}$. Clearly $h'$ is an
  automorphism of $G$. The fact that $h'$ preserves $c'$ follows since $g$ is an isomorphism from
  $(G,c)$ to $(G,c')$ and $h$ preserves $c$. Since by Lemma~\ref{lem:action}
  \begin{equation*}
    g\cdot (h,L) = (g\comp h\comp g^{-1}, g(L)) = (h',L')
  \end{equation*}
  is a fold of $G$ with
  \begin{equation*}
    \Fix(h') = \Fix(g\comp h\comp g^{-1}) = g(\Fix(h)) = g(\Fix(f))
    = \Fix(g\comp f\comp g^{-1}) = g(\Fix(f)) = \Fix(f')
  \end{equation*}
  and since $g\in K$ and $\cF'$ is $K$-invariant, it follows that $(h',L')\in\cF'$, so $c'$ is
  $\cF'$-strongly $(K,\cF)$-fold-stable.

  \medskip

  For item~\ref{lem:foldstable:maximal}, let
  \begin{equation*}
    \cR \df \{c'\colon E(G)\to C \mid c\preceq_{\cF} c'\}.
  \end{equation*}
  By item~\ref{lem:foldstable:reachabilitystrong}, we know that every $c'\in\cR$ is $\cF$-strongly
  $(K,\cF)$-fold-stable. Thus, to show that $c$ is $\preceq_{\cF}$-maximal, (since $\cF$ is closed
  under dual folds) it suffices to show that for every $c'\in\cR$ and every $(f,L)\in\cF$, we have
  $c'\comp f_L\preceq_{\cF} c'$. Since $c'$ is $\cF$-strongly $(K,\cF)$-fold-stable, there exists
  $h\in\Aut((G,c'))$ such that $(h,L)\in\cF$ and $\Fix(h)=\Fix(f)$. By
  item~\ref{lem:foldstable:strong}, we have
  \begin{equation*}
    c'\comp f_L\comp h_L = c',
  \end{equation*}
  that is, we have $c'\comp f_L\preceq_{\cF} c'$, as desired.

  \medskip

  We now prove item~\ref{lem:foldstable:CSequal->strong}. By Lemma~\ref{lem:CS} along with the
  equality~\eqref{eq:CSequal}, we have $F_1\rest_{C(F_1)}\cong F_2\rest_{C(F_2)}$ for the flags
  \begin{align*}
    F_1 & \df (H\rest_{L\cup\Fix(f)},\theta) - E(H'), &
    F_2 & \df (H\rest_{f(L)\cup\Fix(f)},\theta) - E(H'),
  \end{align*}
  where $H\df(G,c)$, $H'\df H\rest_{\Fix(f)}$ and $\theta\colon[\lvert\Fix(f)\rvert]\to V(G)$ is any
  injection with $\im(\theta)=\Fix(f)$.

  Since all edges of $H'$ are contained in $\Fix(f)$, it follows that for the flags
  \begin{align*}
    F'_1 & \df (H\rest_{L\cup\Fix(f)},\theta), &
    F'_2 & \df (H\rest_{f(L)\cup\Fix(f)},\theta),
  \end{align*}
  we also have $F'_1\rest_{C(F'_1)}\cong F'_2\rest_{C(F'_2)}$. Since $G$ is connected, we have
  $C(F'_i)=V(F'_i)$ ($i\in[2]$), so we get $F'_1\cong F'_2$.

  Let then $g$ be an isomorphism from $F'_1$ to $F'_2$. Since $g$ preserves the flag labeling, we
  must have $\Fix(g)\supseteq\im(\theta)=\Fix(f)$ and in fact $\Fix(g)=\Fix(f)$ as $V(F'_1)\cap
  V(F'_2)=\Fix(f)$.

  Let $h\colon V(G)\to V(G)$ act as $g$ on $L\cup\Fix(f))$ and as $g^{-1}$ on $f(L)$. Since
  \begin{align*}
    \Fix(g) & = \Fix(f), &
    H(F'_1) & = H\rest_{L\cup\Fix(f)}, &
    H(F'_2) & = H\rest_{f(L)\cup\Fix(f)},
  \end{align*}
  and $\Fix(g)$ is a cut of $G$, it follows that $h$ is an automorphism of $H=(G,c)$ such that
  $\Fix(h)=\Fix(f)$ and $(h,L)$ is a fold of $G$. Thus $c$ is strongly $\{(f,L)\}$-fold-stable.

  \medskip

  For item~\ref{lem:foldstable:weak->strong}, let $(f,L)\in\cF$ and since $G$ is finite, by
  Lemma~\ref{lem:CS}, for every sequence $W = (W_i)_{i\in C_c}$ of bigraphons
  $W_i\colon\Omega\times\Lambda\to\RR_+$, we have
  \begin{equation*}
    t((G,c),W) \leq \sqrt{t((G,c\comp f_L),W)\cdot t((G,c\comp f_L^*),W)}
  \end{equation*}
  and since $c$ is $(K,\cF)$-fold-stable and $\cF$ is closed under dual folds, the above in fact
  holds with equality as $(G,c\comp f_L)\cong (G,c)\cong (G,c\comp f_L^*)$. Thus, by
  item~\ref{lem:foldstable:CSequal->strong}, we get that $c$ is strongly $(K,\{(f,L)\})$-fold-stable
  and since $(f,L)$ was chosen arbitrarily in $\cF$, it follows that $c$ is strongly
  $(K,\cF)$-fold-stable.

  \medskip

  For item~\ref{lem:foldstable:symmfoldstable}, since $\cF$ is closed under dual folds and
  $r\preceq_{\cF} c$, there exists a sequence of folds $(f_1,L_1),\ldots,(f_n,L_n)\in\cF$ such that
  \begin{equation}\label{eq:cr}
    c = r\comp (f_1)_{L_1}\comp\cdots\comp (f_n)_{L_n}.
  \end{equation}

  Let $i,j\in\im(c)$. Since $i\in\im(c)\subseteq\im(r)$ and $r$ is injective, there exists a unique
  edge $e_i\in E(G)$ such that $r(e_i)=i$. On the other hand, since $j\in\im(c)$, there must exist
  at least one edge $e_j\in E(G)$ such that $c(e_j)=j$.

  Since $G$ is $K'$-edge-transitive, there must exist a sequence of folds
  $(f'_1,L'_1),\ldots,(f'_m,L'_m)\in\cF$ such that $(f'_1\comp\cdots\comp f'_m)(e_i)=e_j$ and since
  $\cF$ is closed under dual folds, we may suppose without loss of generality that for every
  $t\in[m]$, the edge
  \begin{equation*}
    (f'_{t+1}\comp\cdots\comp f'_m)(e_i)
  \end{equation*}
  is contained in $f'_t(L'_t)\cup\Fix(f'_t)$. This in particular implies that
  \begin{equation}\label{eq:f'ei}
    ((f'_1)_{L'_1}\comp\cdots\comp(f'_m)_{L'_m})(e_i) = e_j.
  \end{equation}

  Let
  \begin{equation}\label{eq:c'c}
    c' \df c\comp(f'_1)_{L'_1}\comp\cdots\comp(f'_m)_{L'_m}\comp(f_1)_{L_1}\comp\cdots\comp(f_n)_{L_n}.
  \end{equation}
  We claim that $c' = \sigma\comp c$ for some permutation $\sigma\colon C\to C$ with
  $\sigma(i)=j$. To prove this, it suffices to show that:
  \begin{enumerate}[label={\alph*.}, ref={(\alph*)}]
  \item\label{it:c'level} If $e,e'\in E(G)$ are such that $c(e)=c(e')$, then $c'(e)=c'(e')$.
  \item\label{it:c'ij} For every $e\in c^{-1}(i)$, we have $c'(e)=j$.
  \end{enumerate}

  For item~\ref{it:c'level}, note that $c(e)=c(e')$ along with~\eqref{eq:cr} and the fact that $r$
  is injective implies that
  \begin{equation*}
    (f_1)_{L_1}\comp\cdots\comp (f_n)_{L_n}(e) = (f_1)_{L_1}\comp\cdots\comp (f_n)_{L_n}(e'),
  \end{equation*}
  so $c'(e)=c'(e')$ follows from~\eqref{eq:c'c}.

  For item~\ref{it:c'ij}, since $c(e)=i$, \eqref{eq:cr} along with the fact that $r$ is injective
  implies that
  \begin{equation*}
    (f_1)_{L_1}\comp\cdots\comp (f_n)_{L_n}(e) = e_i,
  \end{equation*}
  which along with~\eqref{eq:f'ei} and~\eqref{eq:c'c} yields
  \begin{align*}
    c'(e)
    & =
    (c\comp(f'_1)_{L'_1}\comp\cdots\comp(f'_m)_{L'_m}\comp(f_1)_{L_1}\comp\cdots\comp(f_n)_{L_n})(e)
    \\
    & =
    (c\comp(f'_1)_{L'_1}\comp\cdots\comp(f'_m)_{L'_m})(e_i)
    \\
    & =
    c(e_j)
    =
    j.
  \end{align*}

  Finally, the definition of $c'$ implies that $c\preceq_{\cF} c'$, so
  item~\ref{lem:foldstable:reachability} implies that $(G,c)$ is isomorphic to $(G,c')$ under a
  bijection in $K$.
\end{proof}

The next lemma is the left-sided analogue of
Lemma~\ref{lem:foldstable}\ref{lem:foldstable:symmfoldstable}.

\begin{lemma}\label{lem:leftsymmfoldstable}
  Let $G$ be a bigraph, let $C$ be a set of colors, let $c\colon E(G)\to C'$ be a coloring of $G$,
  let $\cF,\cF'\subseteq\Fold((G,c))$ be sets of folds of the colored bigraph $(G,c)$, let $K$ be a
  subgroup of the automorphism group $\Aut((G,c))$ of $(G,c)$, and let $r\colon V_1(G)\to C$ be a
  rainbow left-coloring of $G$.

  Suppose that $\cF$ is $K$-invariant and closed under dual folds, $G$ is
  $K'$-left-vertex-transitive, where $K'$ is the subgroup of $\Aut((G,c))$ generated by $\{f\mid
  (f,L)\in\cF\}$.

  If $\ell\colon V_1(G)\to C$ is a left-coloring of $G$ such that $r\otimes
  c\preceq_{\cF}\ell\otimes c$ and $\ell\otimes c$ is $\cF'$-strongly $(K,\cF)$-fold-stable, then
  $c$ is $\cF'$-left-symmetrically $(K,\cF)$-fold-stable.
\end{lemma}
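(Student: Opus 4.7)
The plan is to mimic the proof of Lemma~\ref{lem:foldstable}\ref{lem:foldstable:symmfoldstable} with ``edges'' replaced by ``left-vertices'' and edge-transitivity by left-vertex-transitivity. Since $\cF'$-strong $(K,\cF)$-fold-stability of $\ell\otimes c$ is already assumed, I only need to verify the symmetry clause: for every $i,j\in\im(\ell)$, I must produce a permutation $\sigma\colon C\to C$ with $\sigma(i)=j$ together with $g\in K$ realizing an isomorphism $(G,\ell\otimes c)\cong(G,(\sigma\comp\ell)\otimes c)$.

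First I would unpack $r\otimes c\preceq_\cF\ell\otimes c$: because every $(f,L)\in\cF\subseteq\Fold((G,c))$ preserves $c$, the identity $(\ell'\otimes c)\comp f_L=(\ell'\comp f_L)\otimes c$ reduces the relation to a factorization $\ell = r\comp(f_1)_{L_1}\comp\cdots\comp(f_n)_{L_n}$ with each $(f_k,L_k)\in\cF$ (using closure under dual folds to absorb inverses). Fixing $i,j\in\im(\ell)$, rainbowness of $r$ selects the unique $v_i\in V_1(G)$ with $r(v_i)=i$, and I pick any $v_j\in\ell^{-1}(j)$. Then $K'$-left-vertex-transitivity yields folds $(f'_1,L'_1),\ldots,(f'_m,L'_m)\in\cF$ whose automorphism composition sends $v_i$ to $v_j$, and the standard trick of swapping $L'_t$ with $f'_t(L'_t)$ whenever the running image of $v_i$ is on the wrong side arranges $((f'_1)_{L'_1}\comp\cdots\comp(f'_m)_{L'_m})(v_i)=v_j$.

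Setting $\ell'\df \ell\comp(f'_1)_{L'_1}\comp\cdots\comp(f'_m)_{L'_m}\comp(f_1)_{L_1}\comp\cdots\comp(f_n)_{L_n}$, two quick checks establish $\ell'=\sigma\comp\ell$ for some permutation $\sigma$ of $C$ with $\sigma(i)=j$: (a) if $\ell(v)=\ell(v')$, then rainbowness of $r$ combined with the factorization forces the $(f_k)_{L_k}$-chain to coincide on $v$ and $v'$, whence $\ell'(v)=\ell'(v')$; (b) any $v\in\ell^{-1}(i)$ is carried by the $(f_k)_{L_k}$-chain to $v_i$ and then by the $(f'_t)_{L'_t}$-chain to $v_j$, so $\ell'(v)=\ell(v_j)=j$. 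Tensoring with $c$ (preserved by all folding maps in $\cF$) gives $\ell\otimes c\preceq_\cF(\sigma\comp\ell)\otimes c$, and applying Lemma~\ref{lem:foldstable}\ref{lem:foldstable:reachability} to the $(K,\cF)$-fold-stable coloring $\ell\otimes c$ produces the desired $g\in K$. The argument is essentially a mechanical translation of the edge case; the single step that needs genuine care is the first, namely verifying that $\cF\subseteq\Fold((G,c))$ keeps the $c$-component fixed throughout, so that $\preceq_\cF$ restricts cleanly to the left-coloring component. Once that observation is in hand, no further obstacle arises.
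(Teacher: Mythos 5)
Your proposal is correct and is essentially the paper's own proof: the paper likewise factors $\ell\otimes c=(r\comp(f_1)_{L_1}\comp\cdots\comp(f_n)_{L_n})\otimes c$, uses $K'$-left-vertex-transitivity together with closure under dual folds to steer the unique $r$-preimage of $i$ onto a chosen $\ell$-preimage of $j$ via folding maps, verifies the same two checks (a) and (b) to get $\ell'=\sigma\comp\ell$ with $\sigma(i)=j$, and invokes Lemma~\ref{lem:foldstable}\ref{lem:foldstable:reachability} to obtain the isomorphism in $K$. No substantive difference from the paper's argument.
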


\begin{proof}
  The proof is analogous to that of Lemma~\ref{lem:foldstable}\ref{lem:foldstable:symmfoldstable}.

  Since $\cF$ is closed under dual folds and $r\otimes c\preceq_{\cF}\ell\otimes c$, there exists a
  sequence of folds $(f_1,L_1),\ldots,(f_n,L_n)\in\cF$ such that
  \begin{equation}\label{eq:ellr}
    \ell\otimes c
    =
    (r\otimes c)\comp(f_1)_{L_1}\comp\cdots\comp (f_n)_{L_n}
    =
    (r\comp(f_1)_{L_1}\comp\cdots\comp (f_n)_{L_n})\otimes c.
  \end{equation}

  Let $i,j\in\im(\ell)$. Since $i\in\im(\ell)\subseteq\im(r)$ and $r$ is injective, there exists a
  unique vertex $u_i\in V_1(G)$ such that $r(u_i)=i$. On the other hand, since $j\in\im(\ell)$,
  there must exist at least one vertex $u_j\in V_1(G)$ such that $\ell(u_j)=j$.

  Since $G$ is $K'$-left-vertex-transitive, there must exist a sequence of folds
  $(f'_1,L'_1),\ldots,(f'_m,L'_m)\in\cF$ such that $(f'_1\comp\cdots\comp f'_m)(u_i)=u_j$ and since
  $\cF$ is closed under dual folds, we may suppose without loss of generality that for every
  $t\in[m]$, the vertex
  \begin{equation*}
    (f'_{t+1}\comp\cdots\comp f'_m)(u_i)
  \end{equation*}
  is contained in $f'_t(L'_t)\cup\Fix(f'_t)$. This in particular implies that
  \begin{equation}\label{eq:f'ui}
    ((f'_1)_{L'_1}\comp\cdots\comp(f'_m)_{L'_m})(u_i) = u_j.
  \end{equation}

  Let
  \begin{equation}\label{eq:ell'ell}
    \ell' \df \ell\comp(f'_1)_{L'_1}\comp\cdots\comp(f'_m)_{L'_m}\comp(f_1)_{L_1}\comp\cdots\comp(f_n)_{L_n}.
  \end{equation}
  We claim that $\ell' = \sigma\comp\ell$ for some permutation $\sigma\colon C\to C$ with
  $\sigma(i)=j$. To prove this, it suffices to show that:
  \begin{enumerate}[label={\alph*.}, ref={(\alph*)}]
  \item\label{it:ell'level} If $u,u'\in V_1(G)$ are such that $\ell(u)=\ell(u')$, then
    $\ell'(u)=\ell'(u')$.
  \item\label{it:ell'ij} For every $u\in\ell^{-1}(i)$, we have $\ell'(u)=j$.
  \end{enumerate}

  For item~\ref{it:ell'level}, note that $\ell(u)=\ell(u')$ along with~\eqref{eq:ellr} and the fact
  that $r$ is injective implies that
  \begin{equation*}
    (f_1)_{L_1}\comp\cdots\comp (f_n)_{L_n}(u) = (f_1)_{L_1}\comp\cdots\comp (f_n)_{L_n}(u'),
  \end{equation*}
  so $\ell'(u)=\ell'(u')$ follows from~\eqref{eq:ell'ell}.

  For item~\ref{it:ell'ij}, since $\ell(u)=i$, \eqref{eq:ellr} along with the fact that $r$ is
  injective implies that
  \begin{equation*}
    (f_1)_{L_1}\comp\cdots\comp (f_n)_{L_n}(u) = u_i,
  \end{equation*}
  which along with~\eqref{eq:f'ui} and~\eqref{eq:ell'ell} yields
  \begin{align*}
    \ell'(u)
    & =
    (\ell\comp(f'_1)_{L'_1}\comp\cdots\comp(f'_m)_{L'_m}\comp(f_1)_{L_1}\comp\cdots\comp(f_n)_{L_n})(u)
    \\
    & =
    (\ell\comp(f'_1)_{L'_1}\comp\cdots\comp(f'_m)_{L'_m})(u_i)
    \\
    & =
    \ell(u_j)
    =
    j.
  \end{align*}

  Finally, the definition of $\ell'$ implies that $\ell\otimes c\preceq_{\cF}\ell'\otimes c$, so
  Lemma~\ref{lem:foldstable}\ref{lem:foldstable:reachability} implies that $(G,\ell\otimes c)$ is
  isomorphic to $(G,\ell'\otimes c)$ under a bijection in $K$.
\end{proof}

\begin{lemma}\label{lem:maximal}
  Let $G$ be a finite bigraph, let $C$ be a set of colors and let $\cF\subseteq\Fold(G)$ be a set of
  folds of $G$. The following hold for a $\preceq_{\cF}$-strongly connected set $\cC\subseteq
  C^{E(G)}$ of $\preceq_{\cF}$-maximal elements:
  \begin{enumerate}
  \item\label{lem:maximal:absorbing} For every $c\in\cC$ and every non-empty $\cC'\subseteq\cC$, the
    folding problem $(G,c,\cC',\cF)$ is absorbing.
  \item\label{lem:maximal:percolation} For every $c\in\cC$ and every non-empty $\cC'\subseteq\cC$,
    we have $c\ll_{\cF}\cC'$.
  \item\label{lem:maximal:stronglyfoldstable} If $G$ is connected, then for every $c,c'\in\cC$, we
    have $(G,c)\cong (G,c')$ and $c$ is strongly $\cF$-fold-stable.
  \end{enumerate}
\end{lemma}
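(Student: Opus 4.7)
The plan is to prove the three items in sequence, each feeding into the next. For item~\ref{lem:maximal:absorbing}, I would fix $c \in \cC$ and a non-empty $\cC' \subseteq \cC$, and verify absorption directly. Given any $c'' \in C_c^{E(G)}$ with $c \preceq_\cF c''$ and any fold $(f, L) \in \cF$, a single-fold tree shows $c'' \preceq_\cF c'' \comp f_L$, so $c \preceq_\cF c'' \comp f_L$; maximality of $c$ then forces $c'' \comp f_L \preceq_\cF c$. Since $\cC$ is $\preceq_\cF$-strongly connected and $\cC' \subseteq \cC$ is non-empty, $c \preceq_\cF c^*$ for any $c^* \in \cC'$, giving $c'' \comp f_L \preceq_\cF \cC'$, and the argument for $c'' \comp f_L^*$ is identical. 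Item~\ref{lem:maximal:percolation} then follows immediately: strong connectivity of $\cC$ yields $c \preceq_\cF \cC'$, absorption was just shown, and $G$ is finite, so Proposition~\ref{prop:abs} gives $c \ll_\cF \cC'$.

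For the first half of item~\ref{lem:maximal:stronglyfoldstable}, I would apply item~\ref{lem:maximal:percolation} with $\cC' = \{c'\}$ and, symmetrically, $\cC' = \{c\}$ to obtain $c \ll_\cF \{c'\}$ and $c' \ll_\cF \{c\}$. Lemma~\ref{lem:CSll} with single-element support then gives $t((G, c), W) = t((G, c'), W)$ for every sequence $W$ of bigraphons. Since $G$ is connected, it has no isolated vertices (the degenerate case $v(G) = 1$ forces $\cF = \varnothing$ and makes the claim trivial), so Proposition~\ref{prop:flagiso} applied to $((G, c), \varnothing)$ and $((G, c'), \varnothing)$ yields $(G, c) \cong (G, c')$.

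For strong $\cF$-fold-stability of $c$, my plan is to fix any $(f, L) \in \cF$ and enlarge $\cC$ to the upward closure $\widetilde\cC \df \{c^* \in C^{E(G)} : c \preceq_\cF c^*\}$. Maximality of $c$ implies that every element of $\widetilde\cC$ is $\preceq_\cF$-maximal and that $\widetilde\cC$ is $\preceq_\cF$-strongly connected, and a single fold gives $c \comp f_L, c \comp f_L^* \in \widetilde\cC$. Item~\ref{lem:maximal:percolation} now applies to $\widetilde\cC$ with the starting colorings $c \comp f_L$ and $c \comp f_L^*$ and target $\{c\}$, yielding $c \comp f_L \ll_\cF \{c\}$ and $c \comp f_L^* \ll_\cF \{c\}$. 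Lemma~\ref{lem:CSll} then gives $t((G, c \comp f_L), W), t((G, c \comp f_L^*), W) \leq t((G, c), W)$ for every $W$, and combining with the Cauchy--Schwarz inequality of Lemma~\ref{lem:CS} squeezes
\begin{equation*}
  t((G, c), W)
  \leq
  \sqrt{t((G, c \comp f_L), W) \cdot t((G, c \comp f_L^*), W)}
  \leq
  t((G, c), W)
\end{equation*}
into equality for every $W$. Lemma~\ref{lem:foldstable}\ref{lem:foldstable:CSequal->strong} (using connectedness of $G$) then gives strong $\{(f, L)\}$-fold-stability of $c$, and varying $(f, L)$ over $\cF$ concludes the proof. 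I do not anticipate serious obstacles; the bookkeeping point to verify most carefully is that the enlarged $\widetilde\cC$ truly satisfies the hypotheses of Part~2, so that item~\ref{lem:maximal:percolation} can legitimately be re-invoked with starting points other than $c$ itself.
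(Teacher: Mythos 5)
Your proposal is correct, and for items (i), (ii) and the strong fold-stability part of (iii) it is essentially the paper's own argument: the same maximality-plus-strong-connectivity verification of absorption, the same appeal to Proposition~\ref{prop:abs}, and the same squeeze of the Cauchy--Schwarz inequality of Lemma~\ref{lem:CS} against Lemma~\ref{lem:CSll} over the upward closure $\{c^*\mid c\preceq_{\cF}c^*\}$ (the paper's $\cR$, your $\widetilde\cC$), finished by Lemma~\ref{lem:foldstable}\ref{lem:foldstable:CSequal->strong}. The one genuine divergence is the isomorphism $(G,c)\cong(G,c')$: you derive it analytically, using $c\ll_{\cF}\{c'\}$ and $c'\ll_{\cF}\{c\}$ to force $t((G,c),W)=t((G,c'),W)$ for all $W$ and then invoking Proposition~\ref{prop:flagiso} on the unlabeled flags, whereas the paper deduces it combinatorially from the already-established strong $\cF$-fold-stability via Lemma~\ref{lem:foldstable}\ref{lem:foldstable:strong}. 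Both routes are valid; yours needs the no-isolated-vertices hypothesis of Proposition~\ref{prop:flagiso}, which you correctly note follows from connectedness once the degenerate one-vertex case is set aside, while the paper's avoids any further analytic input but threads the isomorphism through the fold chain instead.
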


\begin{proof}
  For item~\ref{lem:maximal:absorbing}, let $c'\in\cC'$, let $r\in C^{E(G)}\setminus\cC'$ be such
  that $c\preceq_{\cF} r$ and let $(f,L)\in\cF$. Since $c$ is $\preceq_{\cF}$-maximal, we must have
  $r\comp f_L\preceq_{\cF} c$ and $r\comp f_L^*\preceq_{\cF} c$. Since $\cC$ is
  $\preceq_{\cF}$-strongly connected and $c,c'\in\cC$, we have $c\preceq_{\cF} c'$, hence $r\comp
  f_L\preceq_{\cF} c'$ and $r\comp f_L^*\preceq_{\cF} c'$, thus $r\comp f_L\preceq_{\cF}\cC'$ and
  $r\comp f_L^*\preceq_{\cF}\cC'$. Therefore $(G,c,\cC',\cF)$ is absorbing.

  \medskip

  For item~\ref{lem:maximal:percolation}, since $\cC$ is $\preceq_{\cF}$-strongly connected and
  $\cC'\subseteq\cC$ is non-empty, we know that $c\preceq_{\cF}\cC'$ and since
  item~\ref{lem:maximal:absorbing} says $(G,c,\cC',\cF)$ is absorbing, by
  Proposition~\ref{prop:abs}, it follows that $c\ll_{\cF}\cC'$.

  \medskip

  For item~\ref{lem:maximal:stronglyfoldstable}, let
  \begin{equation*}
    \cR \df \{r\in C^{E(G)} \mid c\preceq_{\cF} r\}
  \end{equation*}
  be the set of colorings that are $\cF$-reachable from $c$. Since $c\in\cR$ and $c$ is
  $\preceq_{\cF}$-maximal, it follows that $\cR$ is $\preceq_{\cF}$-strongly connected, so by
  item~\ref{lem:maximal:percolation}, for every $r\in\cR$, we have $r\ll_{\cF}\{c\}$, in particular,
  for a given fold $(f,L)\in\cF$, we have $c\comp f_L\ll_{\cF}\{c\}$ and $c\comp
  f_L^*\ll_{\cF}\{c\}$.

  Applying Lemma~\ref{lem:CS} followed by Lemma~\ref{lem:CSll} for $c\comp f_L$ and $c\comp f_L^*$,
  it follows that for every sequence $W = (W_i)_{i\in C}$ of bigraphons
  $W_i\colon\Omega\times\Lambda\to\RR_+$, we have
  \begin{equation*}
    t((G,c),W)
    \leq
    \sqrt{t((G,c\comp f_L),W)\cdot t((G,c\comp f_L^*),W)}
    \leq
    t((G,c),W),
  \end{equation*}
  so we must have equality throughout, hence by Lemma~\ref{lem:foldstable:CSequal->strong}, it
  follows that $c$ is strongly $\{(f,L)\}$-fold-stable. Since $(f,L)$ was chosen arbitrarily in
  $\cF$, it follows that $c$ is strongly $\cF$-fold-stable.

  To see that for every $c,c'\in\cC$, we have $(G,c)\cong (G,c')$, since $\cC$ is
  $\preceq_{\cF}$-strongly connected, it suffices to show that for every fold $(f,L)\in\cF$, we have
  \begin{equation*}
    (G,c)\cong (G,c\comp f_L)\cong (G,c\comp f_L^*).
  \end{equation*}
  But this follows from strong $\cF$-fold-stability along with
  Lemma~\ref{lem:foldstable}\ref{lem:foldstable:strong}.
\end{proof}

The next corollary summarizes Lemmas~\ref{lem:foldstable} and~\ref{lem:maximal} into a sufficient
condition for all versions of fold-stability and maximality to be equivalent.

\begin{corollary}\label{cor:obst}
  Let $G$ be a finite connected bigraph, let $\cF\subseteq\Fold(G)$ be a set of folds of $G$ that is
  $\Aut(G)$-invariant. The following are equivalent for a coloring $c\colon E(G)\to C$ of $G$:
  \begin{enumerate}
  \item\label{cor:obst:foldstable} $c$ is $\cF$-fold-stable.
  \item\label{cor:obst:stronglyfoldstable} $c$ is $\cF$-strongly $\cF$-fold-stable.
  \item\label{cor:obst:maximal} $c$ is $\preceq_{\cF}$-maximal.
  \end{enumerate}

  Furthermore, if there exists a rainbow coloring $r\colon E(G)\to C$ such that $r\preceq_{\cF} c$
  and $G$ is $K'$-edge-transitive, where $K'$ is the subgroup of $\Aut(G)$ generated by $\{f\mid
  (f,L)\in\cF\}$, then the following is also equivalent to the above:
  \begin{enumerate}[resume]
  \item\label{cor:obst:symmfoldstable} $c$ is $\cF$-symmetrically $\cF$-fold-stable.
  \end{enumerate}
\end{corollary}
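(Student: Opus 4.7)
The plan is to close the cycle $(1)\Rightarrow(2)\Rightarrow(3)\Rightarrow(2)\Rightarrow(1)$ by citing the appropriate pieces of Lemmas~\ref{lem:foldstable} and~\ref{lem:maximal}. First, I would record a standing observation: $\Aut(G)$-invariance of $\cF$ together with Lemma~\ref{lem:dualfold} forces $\cF$ to be closed under dual folds, since the subgroup generated by $\{f\mid(f,L)\in\cF\}$ is contained in $\Aut(G)$ and $\cF$ is therefore invariant under it. I would also recall the convention that ``$\cF$-fold-stable'' abbreviates ``$(\Aut(G),\cF)$-fold-stable'', and similarly for the strong and symmetric versions; so all invocations take $K\df\Aut(G)$.

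For $(1)\Rightarrow(2)$, since $G$ is finite and connected and $\cF$ is closed under dual folds, Lemma~\ref{lem:foldstable}\ref{lem:foldstable:weak->strong} applies with $K=\Aut(G)$ and upgrades $\cF$-fold-stability to $\cF$-strong $\cF$-fold-stability. The implication $(2)\Rightarrow(3)$ is exactly Lemma~\ref{lem:foldstable}\ref{lem:foldstable:maximal}, whose hypotheses match ours verbatim. For the converse $(3)\Rightarrow(2)$, I would apply Lemma~\ref{lem:maximal}\ref{lem:maximal:stronglyfoldstable} to the singleton $\cC\df\{c\}$: it is trivially $\preceq_{\cF}$-strongly connected, consists of a single $\preceq_{\cF}$-maximal element, and $G$ is connected, so that lemma yields that $c$ is strongly $\cF$-fold-stable. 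Finally, $(2)\Rightarrow(1)$ is immediate from Lemma~\ref{lem:foldstable}\ref{lem:foldstable:strong}: for each $(f,L)\in\cF$, strong $\cF$-fold-stability hands us an $h\in\Aut((G,c))$ with $\Fix(h)=\Fix(f)$ and $(h,L)\in\cF$, and the construction in that item exhibits an explicit element of $\Aut(G)$ witnessing $(G,c)\cong(G,c\comp f_L)$.

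For the furthermore, the implication $(4)\Rightarrow(2)$ is immediate from the definition of symmetric fold-stability, which contains strong fold-stability as a sub-condition. The remaining implication $(2)\Rightarrow(4)$ is exactly Lemma~\ref{lem:foldstable}\ref{lem:foldstable:symmfoldstable} applied with $K=\Aut(G)$ and $\cF'=\cF$; its hypotheses are precisely the added assumptions of existence of a rainbow $r$ with $r\preceq_{\cF} c$ and of $K'$-edge-transitivity of $G$. I do not anticipate any real obstacle: the proof is a straightforward assembly of previously established lemmas, and the only care needed is bookkeeping of the convention $K=\Aut(G)$ and the preliminary observation that $\Aut(G)$-invariance of $\cF$ implies closure under dual folds.
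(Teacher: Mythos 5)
Your proposal is correct and follows essentially the same route as the paper: closure under dual folds via Lemma~\ref{lem:dualfold}, then Lemma~\ref{lem:foldstable}\ref{lem:foldstable:weak->strong}, \ref{lem:foldstable:maximal}, \ref{lem:foldstable:symmfoldstable} and Lemma~\ref{lem:maximal}\ref{lem:maximal:stronglyfoldstable} for the respective implications. The only (harmless) cosmetic differences are that you apply Lemma~\ref{lem:maximal}\ref{lem:maximal:stronglyfoldstable} to the singleton $\{c\}$ rather than to the reachability set $\{c'\mid c\preceq_{\cF}c'\}$ as the paper does, and that you route the trivial implications through Lemma~\ref{lem:foldstable}\ref{lem:foldstable:strong} and the definition of symmetric fold-stability instead of citing them as immediate.
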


\begin{proof}
  First note that by Lemma~\ref{lem:dualfold}, $\cF$ is closed under dual folds.
  
  The implications~\ref{cor:obst:stronglyfoldstable}$\implies$\ref{cor:obst:foldstable}
  and~\ref{cor:obst:symmfoldstable}$\implies$\ref{cor:obst:foldstable} are trivial.

  The implication~\ref{cor:obst:foldstable}$\implies$\ref{cor:obst:stronglyfoldstable} is
  Lemma~\ref{lem:foldstable}\ref{lem:foldstable:weak->strong}.

  The implication~\ref{cor:obst:stronglyfoldstable}$\implies$\ref{cor:obst:maximal} is
  Lemma~\ref{lem:foldstable}\ref{lem:foldstable:maximal}.

  The implication~\ref{cor:obst:maximal}$\implies$\ref{cor:obst:stronglyfoldstable} is
  Lemma~\ref{lem:maximal}\ref{lem:maximal:stronglyfoldstable} with
  \begin{equation*}
    \cC \df \{c'\in C^{E(G)} \mid c\preceq_{\cF} c'\}.
  \end{equation*}

  The implication~\ref{cor:obst:foldstable}$\implies$\ref{cor:obst:symmfoldstable} under the
  additional assumptions of $r\preceq_{\cF} c$ and $K'$-edge-transitivity of $G$ is
  Lemma~\ref{lem:foldstable}\ref{lem:foldstable:symmfoldstable}.
\end{proof}

\section{Cut-percolating bigraphs}
\label{sec:cutperc}

In this section, we put together all results proven so far to characterize cut-percolation
(Theorem~\ref{thm:cutperc}) and left-cut-percolation (Theorem~\ref{thm:leftcutperc}) in terms of
non-existence of (any of) the non-monochromatic obstacles.

\begin{theorem}\label{thm:cutperc}
  Let $G$ be a finite connected bigraph, let $C$ be a set of colors, let $\cF\subseteq\Fold(G)$ be a
  set of folds of $G$ that is $\Aut(G)$-invariant, let $\cM\subseteq C^{E(G)}$ be the set of
  monochromatic colors of $G$ by $C$ and let $r\colon E(G)\to C$ be a rainbow coloring of $G$. Then
  the following are equivalent:
  \begin{enumerate}
  \item\label{thm:cutperc:cutperc} $G$ is $\cF$-cut-percolating.
  \item\label{thm:cutperc:reachevery} $c\preceq_{\cF}\cM$ for every $c\colon E(G)\to C$.
  \item\label{thm:cutperc:reach} $r\preceq_{\cF}\cM$.
  \item\label{thm:cutperc:llevery} $c\ll_{\cF}\cM$ for every $c\colon E(G)\to C$.
  \item\label{thm:cutperc:ll} $r\ll_{\cF}\cM$.
  \item\label{thm:cutperc:foldstable} Every $\cF$-fold-stable coloring $c\colon E(G)\to C$ of $G$ is
    monochromatic.
  \item\label{thm:cutperc:stronglyfoldstable} Every strongly $\cF$-fold-stable coloring $c\colon
    E(G)\to C$ of $G$ is monochromatic.
  \item\label{thm:cutperc:maximal} Every $\preceq_{\cF}$-maximal coloring $c\colon E(G)\to C$ of $G$
    is monochromatic.
  \item\label{thm:cutperc:symmfoldstable} Every symmetrically $\cF$-fold-stable coloring $c\colon
    E(G)\to C$ of $G$ is monochromatic and $G$ is $K$-edge-transitive, where $K$ is the subgroup of
    $\Aut(G)$ generated by $\{f\mid (f,L)\in\cF\}$.
  \end{enumerate}
\end{theorem}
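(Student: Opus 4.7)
The plan is to organize the equivalences into three blocks---reachability (1)--(5), obstacles (6)--(8), and item (9)---and connect them using the results already proven. Throughout, I use that $\cF$ is closed under dual folds by Lemma~\ref{lem:dualfold} (since $\cF$ is $\Aut(G)$-invariant).

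For the reachability block, (1) $\Leftrightarrow$ (3) unwinds from definitions: a cut-percolation sequence $E_0, \ldots, E_n$ with folds $(f_i, L_i)$ yields the composition $\pi\df (f_1)_{L_1}\comp\cdots\comp (f_n)_{L_n}$, which sends $E(G)=E_n$ down through $E_{n-1},\ldots,E_1$ into the single-edge set $E_0$, so $r\comp\pi$ is monochromatic. Conversely, given folds with $r\comp\pi$ monochromatic of unique image value $e_0$, the sequence $E_0\df\{e_0\}$, $E_i\df (f_i)_{L_i}^{-1}(E_{i-1})$ is a cut-percolation sequence (one checks inductively that $E_i$ contains the image of $E(G)$ under the tail subcomposition). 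The equivalence (2) $\Leftrightarrow$ (3) is easy: since $r$ is rainbow, any coloring $c$ factors as $c=\sigma\comp r$, so a fold sequence making $r$ monochromatic also makes $c$ monochromatic. For items (4) and (5), the implications (4) $\Rightarrow$ (5) $\Rightarrow$ (3) are trivial (any positive-mass witness of $\ll$ produces a leaf labeled in $\cM$), and (2) $\Rightarrow$ (4) applies Proposition~\ref{prop:abs} to the folding problem $(G,c,\cM,\cF)$, which is vacuously absorbing under (2) since every coloring already reaches $\cM$.

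The obstacle equivalences (6) $\Leftrightarrow$ (7) $\Leftrightarrow$ (8) are Corollary~\ref{cor:obst}. To connect the blocks, (2) $\Rightarrow$ (8): a $\preceq_\cF$-maximal $c$ satisfies $c\preceq_\cF c'\in\cM$ by (2), whence by maximality $c'\preceq_\cF c$, so $\im(c)\subseteq\im(c')$ forces $c$ monochromatic. Conversely, (8) $\Rightarrow$ (3) because the set $\{c : r\preceq_\cF c\}$ is finite---every such $c$ has $\im(c)\subseteq\im(r)$, a set of size $e(G)$---so it contains a $\preceq_\cF$-maximal element $c$; this $c$ is also $\preceq_\cF$-maximal in $C^{E(G)}$ (anything above $c$ is above $r$), and hence monochromatic by (8), yielding $r\preceq_\cF\cM$.

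Finally, for (9): the direction (1) $\Rightarrow$ (9) combines Lemma~\ref{lem:Kedgetransitive} (giving $K$-edge-transitivity) with (7) (symmetrically fold-stable implies strongly fold-stable, hence monochromatic). For (9) $\Rightarrow$ (3), take the $\preceq_\cF$-maximal $c\in\{c : r\preceq_\cF c\}$ constructed above; since $r\preceq_\cF c$ and $G$ is $K$-edge-transitive by (9), the ``furthermore'' clause of Corollary~\ref{cor:obst} upgrades $\preceq_\cF$-maximality of $c$ to $\cF$-symmetric $\cF$-fold-stability, forcing $c$ monochromatic by (9). The most delicate step is this last one, where the hypotheses of the ``furthermore'' clause must all be verified simultaneously; the remaining implications are routine bookkeeping, with Proposition~\ref{prop:abs} and Corollary~\ref{cor:obst} handling the heavy lifting.
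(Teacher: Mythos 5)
Your proposal is correct and follows essentially the same route as the paper: unwinding $\cF$-cut-percolation against rainbow reachability, Proposition~\ref{prop:abs} for reachability versus percolation, Corollary~\ref{cor:obst} (equivalently, Lemmas~\ref{lem:foldstable} and~\ref{lem:maximal}) for the fold-stability/maximality block, finiteness of the reachable set to extract a $\preceq_{\cF}$-maximal element, and Lemma~\ref{lem:Kedgetransitive} plus the symmetric upgrade for item~(9). The only notable deviation is cosmetic rewiring: your direct argument for (2)$\implies$(8) via $\im(c)\subseteq\im(c')$ is a slight shortcut compared with the paper's passage (3)$\implies$(6) through Lemma~\ref{lem:foldstable}\ref{lem:foldstable:reachability}, and you route (9)$\implies$(3) through the ``furthermore'' clause of Corollary~\ref{cor:obst} rather than invoking Lemmas~\ref{lem:maximal} and~\ref{lem:foldstable}\ref{lem:foldstable:symmfoldstable} directly, which amounts to the same argument.
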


Before we prove this theorem, let us note two particular cases to which it can be applied: one is
when $\cF=\Fold(G)$ (which is trivially $\Aut(G)$-invariant), which characterizes when $G$ is
cut-percolating; and the other is when $\cF=\IndFold(G)$ (which is $\Aut(G)$-invariant by
Lemma~\ref{lem:indfold}), which characterizes when $G$ is independently cut-percolating.

\begin{proof}
  The structure of the proof is as follows:
  \begin{equation*}
    \begin{tikzcd}
      \text{\ref{thm:cutperc:cutperc}}
      \arrow[d]
      \arrow[r, rounded corners,%
        to path={%
          -- ([yshift={3ex}]\tikztostart.north)%
          -- ([yshift={3ex}]\tikztotarget.north)%
          -- (\tikztotarget)}]
      &
      \text{\ref{thm:cutperc:symmfoldstable}}
      \arrow[d]
      &
      \text{\ref{thm:cutperc:foldstable}}
      \arrow[d, dashed]
      \arrow[l, rounded corners,%
        to path={%
          -- ([yshift={3ex}]\tikztostart.north)%
          -- ([yshift={3ex}]\tikztotarget.north)%
          -- (\tikztotarget)}]
      \\
      \text{\ref{thm:cutperc:reachevery}}
      \arrow[r, dashed]
      \arrow[d, shift right]
      &
      \text{\ref{thm:cutperc:reach}}
      \arrow[ul]
      \arrow[ur]
      &
      \text{\ref{thm:cutperc:stronglyfoldstable}}
      \arrow[d]
      \\
      \text{\ref{thm:cutperc:llevery}}
      \arrow[u, dashed, shift right]
      \arrow[r, dashed]
      &
      \text{\ref{thm:cutperc:ll}}
      \arrow[u, dashed]
      &
      \text{\ref{thm:cutperc:maximal}}
      \arrow[ul]
    \end{tikzcd}
  \end{equation*}
  In the above, dashed arrows follow trivially from definitions and the joining arrow
  means~\ref{thm:cutperc:cutperc}$\land$\ref{thm:cutperc:foldstable}$\implies$\ref{thm:cutperc:symmfoldstable}.
  
  For the implication~\ref{thm:cutperc:cutperc}$\implies$\ref{thm:cutperc:reachevery}, since $G$ is
  $\cF$-cut-percolating, there exist a finite sequence $E_0,E_1,\ldots,E_n\subseteq E(G)$ and a
  finite sequence $(f_1,L_1),\ldots,(f_n,L_n)\in\cF$ of folds such that $\lvert E_0\rvert=1$,
  $E_n=E(G)$ and for every $i\in[n]$, $E_{i+1}=(f_i)_{L_i}^{-1}(E_i)$, that is, the composition
  \begin{equation*}
    (f_1)_{L_1}\comp\cdots (f_n)_{L_n}
  \end{equation*}
  when applied to $E_n=E(G)$ has image $E_0$, whose size is $1$, so for $c\colon E(G)\to C$, the coloring
  \begin{equation*}
    c\comp(f_1)_{L_1}\comp\cdots (f_n)_{L_n}
  \end{equation*}
  is monochromatic, hence $c\preceq_{\cF}\cM$.

  \medskip

  For the implication~\ref{thm:cutperc:reach}$\implies$\ref{thm:cutperc:cutperc}, since
  $r\preceq_{\cF}\cM$ and Lemma~\ref{lem:dualfold} implies that $\cF$ is closed under dual folds,
  there exists a sequence of folds $(f_1,L_1),\ldots,(f_n,L_n)\in\cF$ such that
  \begin{equation*}
    r\comp (f_1)_{L_1}\comp\cdots\comp(f_n)_{L_n}
  \end{equation*}
  is monochromatic. Since $r$ is injective, this means that $(f_1)_{L_1}\comp\cdots\comp(f_n)_{L_n}$
  must map $E(G)$ to a set $E_0$ of size $1$, so setting inductively
  \begin{align*}
    E_{i+1} \df (f_i)_{L_i}^{-1}(E_i)
  \end{align*}
  gives $E_n\df E(G)$, hence $G$ is $\cF$-cut-percolating.

  \medskip

  For the implication~\ref{thm:cutperc:reachevery}$\implies$\ref{thm:cutperc:llevery}, by
  Proposition~\ref{prop:abs}, it suffices to show that for every $c\colon E(G)\to C$, the folding
  problem $(G,c,\cM,\cF)$ is absorbing. For this, we need to show that if $c\preceq_{\cF} c'$,
  $(f,L)\in\cF$ and $c'\preceq_{\cF}\cM$ is such that $c'\notin\cM$, then $c'\comp
  f_L\preceq_{\cF}\cM$ and $c'\comp f_L^*\preceq_{\cF}\cM$. But since every coloring $c''\colon
  E(G)\to C$ satisfies $c''\preceq_{\cF}\cM$, this follows immediately.

  \medskip

  We now prove the implication~\ref{thm:cutperc:reach}$\implies$\ref{thm:cutperc:foldstable}. Since
  $r\preceq_{\cF}\cM$ and $\cF$ is closed under dual folds (by Lemma~\ref{lem:dualfold}), there
  exists a sequence of folds $(f_1,L_1),\ldots,(f_n,L_n)\in\cF$ such that
  \begin{equation*}
    r\comp (f_1)_{L_1}\comp\cdots\comp(f_n)_{L_n}
  \end{equation*}
  is monochromatic. Since $r$ is injective, this means that $(f_1)_{L_1}\comp\cdots\comp(f_n)_{L_n}$
  must map $E(G)$ to a set $E_0$ of size $1$, so
  \begin{equation*}
    c'\df c\comp (f_1)_{L_1}\comp\cdots\comp(f_n)_{L_n}
  \end{equation*}
  must also be monochromatic. Since $\cF$ is $\Aut(G)$-invariant and closed under dual folds and
  $c\preceq_{\cF} c'$, by Lemma~\ref{lem:foldstable}\ref{lem:foldstable:reachability}, it follows
  that $(G,c)\cong (G,c')$, so $c$ must also be monochromatic.

  \medskip

  For the implication~\ref{thm:cutperc:stronglyfoldstable}$\implies$\ref{thm:cutperc:maximal}, let
  \begin{equation*}
    \cC \df \{c'\colon E(G)\to C \mid c\preceq_{\cF} c'\}
  \end{equation*}
  be the set of colorings $\cF$-reachable from $c$. Since $c$ is $\preceq_{\cF}$-maximal, it follows
  that $\cC$ is a $\preceq_{\cF}$-strongly connected set of $\preceq_{\cF}$-maximal elements. Since
  $G$ is connected, by Lemma~\ref{lem:maximal}\ref{lem:maximal:stronglyfoldstable}, every element of
  $\cC$, in particular also $c$, is strongly $\cF$-fold-stable, hence monochromatic.

  \medskip

  For the implication~\ref{thm:cutperc:maximal}$\implies$\ref{thm:cutperc:reach}, let
  \begin{equation*}
    \cC \df \{c\colon E(G)\to C \mid r\preceq_{\cF} c\}
  \end{equation*}
  be the set of colorings $\cF$-reachable from $r$. Since $G$ is finite, we know that $\cC$ is
  finite, so it must contain at least one $\preceq_{\cF}$-maximal element $m$, which must be
  monochromatic, so $r\preceq_{\cF}\cM$.

  \medskip

  For the implication~\ref{thm:cutperc:symmfoldstable}$\implies$\ref{thm:cutperc:reach}, let
  \begin{equation*}
    \cC \df \{c\colon E(G)\to C \mid r\preceq_{\cF} c\}
  \end{equation*}
  be the set of colorings $\cF$-reachable from $r$. Since $G$ is finite, we know that $\cC$ is
  finite, so it must contain at least one $\preceq_{\cF}$-maximal element $m$. Let
  \begin{equation*}
    \cC' \df \{c\colon E(G)\to C \mid m\preceq_{\cF} c\}
  \end{equation*}
  be the set of colorings $\cF$-reachable from $m$. Since $m$ is $\preceq_{\cF}$-maximal, it follows
  that $\cC'$ is a $\preceq_{\cF}$-strongly connected set of $\preceq_{\cF}$-maximal elements. Since
  $G$ is connected, by Lemma~\ref{lem:maximal}\ref{lem:maximal:stronglyfoldstable}, every element of
  $\cC'$, in particular also $m$, is strongly $\cF$-fold-stable. On the other hand, since
  $r\preceq_{\cF} m$ by construction, Lemma~\ref{lem:foldstable}\ref{lem:foldstable:symmfoldstable}
  says $m$ is in fact symmetrically $\cF$-fold-stable, hence monochromatic. Thus
  $r\preceq_{\cF}\cM$.

  \medskip

  Finally, we prove the joining arrow, that is, we prove
  that~\ref{thm:cutperc:cutperc}$\land$\ref{thm:cutperc:foldstable}$\implies$\ref{thm:cutperc:symmfoldstable},
  it is clear that~\ref{thm:cutperc:foldstable} implies that every symmetrically $\cF$-fold-stable
  coloring is monochromatic. On the other hand, Lemma~\ref{lem:Kedgetransitive} says
  that~\ref{thm:cutperc:cutperc} implies that $G$ is $K$-edge-transitive.
\end{proof}

The following theorem is the left-sided analogue of Theorem~\ref{thm:cutperc}.

\begin{theorem}\label{thm:leftcutperc}
  Let $(G,c)$ be a finite connected bigraph, let $C$ be a set of colors, let $c\colon E(G)\to C'$ be
  a coloring of $G$, let $\cF\subseteq\Fold((G,c))$ be a set of folds of $(G,c)$ that is
  $\Aut((G,c))$-invariant, let
  \begin{equation*}
    \cM \df \{\ell\otimes c \mid \ell\colon V_1(G)\to C\text{ is monochromatic}\}
  \end{equation*}
  and let $r\colon V_1(G)\to C$ be a rainbow left-coloring of $G$. Then the following are
  equivalent:
  \begin{enumerate}
  \item\label{thm:leftcutperc:leftcutperc} $G$ is $\cF$-left-cut-percolating.
  \item\label{thm:leftcutperc:reachevery} $\ell\otimes c\preceq_{\cF}\cM$ for every left-coloring
    $\ell\colon V_1(G)\to C$.
  \item\label{thm:leftcutperc:reach} $r\otimes c\preceq_{\cF}\cM$.
  \item\label{thm:leftcutperc:llevery} $\ell\otimes c\ll_{\cF}\cM$ for every left-coloring
    $\ell\colon V_1(G)\to C$.
  \item\label{thm:leftcutperc:ll} $r\otimes c\ll_{\cF}\cM$.
  \item\label{thm:leftcutperc:foldstable} If $\ell\colon V_1(G)\to C$ is a left-coloring of $G$ such
    that $\ell\otimes c$ is $(\Aut((G,c)),\cF)$-fold-stable, then $\ell$ is monochromatic.
  \item\label{thm:leftcutperc:stronglyfoldstable} If $\ell\colon V_1(G)\to C$ is a left-coloring of
    $G$ such that $\ell\otimes c$ is strongly $(\Aut((G,c)),\cF)$-fold-stable, then $\ell$ is
    monochromatic.
  \item\label{thm:leftcutperc:maximal} If $\ell\colon V_1(G)\to C$ is a left-coloring of $G$ such that
    $\ell\otimes c$ is $\preceq_{\cF}$-maximal, then $\ell$ is monochromatic.
  \item\label{thm:leftcutperc:leftsymmfoldstable} $G$ is $K$-left-vertex-transitive, where $K$ is
    the subgroup of $\Aut((G,c))$ generated by $\{f\mid (f,L)\in\cF\}$ and for every left-coloring
    $\ell\colon V_1(G)\to C$ of $G$, if $\ell\otimes c$ left-symmetrically
    $(\Aut((G,c)),\cF)$-fold-stable, then $\ell$ is monochromatic.
  \end{enumerate}
\end{theorem}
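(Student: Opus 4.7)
The plan is to mirror the proof of Theorem~\ref{thm:cutperc} with the substitutions ``edges $\leadsto$ left-vertices'' and ``colorings of $E(G)$ by $C_c$ $\leadsto$ colorings of $V_1(G)$ by $C$ tensored with the fixed coloring $c$''. The key observation enabling this translation is the identity
\begin{equation*}
  (\ell\otimes c)\comp f_L = (\ell\comp f_L)\otimes c
\end{equation*}
for every $(f,L)\in\Fold((G,c))$ (recorded in the definition of left-colorings), which means that applying folding maps to $\ell\otimes c$ leaves the $c$-component fixed and only acts on $\ell$. Consequently, $\preceq_\cF$-reachability between colorings of the form $\ell\otimes c$ is entirely governed by the action of compositions of folding maps on $V_1(G)$, exactly as $\preceq_\cF$-reachability of edge-colorings is governed by the action on $E(G)$.

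Concretely, I would establish the same diagram of implications as in Theorem~\ref{thm:cutperc}, replacing~\ref{thm:cutperc:symmfoldstable} by~\ref{thm:leftcutperc:leftsymmfoldstable}. First, for~\ref{thm:leftcutperc:leftcutperc}$\Rightarrow$\ref{thm:leftcutperc:reachevery}, the witnessing sequence $V_0,\ldots,V_n$ and folds $(f_i,L_i)\in\cF$ give a composition $(f_1)_{L_1}\comp\cdots\comp(f_n)_{L_n}$ whose image on $V_1(G)$ is the singleton $V_0$, so $(\ell\comp(f_1)_{L_1}\comp\cdots\comp(f_n)_{L_n})\otimes c$ is monochromatic in its left-coloring component for any $\ell$. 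Conversely, for~\ref{thm:leftcutperc:reach}$\Rightarrow$\ref{thm:leftcutperc:leftcutperc}, injectivity of $r$ forces any composition of folding maps that produces a monochromatic $\ell\otimes c$ to collapse $V_1(G)$ to a singleton, from which one defines $V_i$ by inductive preimages as in Theorem~\ref{thm:cutperc}. The implications~\ref{thm:leftcutperc:reachevery}$\Rightarrow$\ref{thm:leftcutperc:llevery} and~\ref{thm:leftcutperc:reach}$\Rightarrow$\ref{thm:leftcutperc:ll} follow from Proposition~\ref{prop:abs} after noting that the folding problem $(G,\ell\otimes c,\cM,\cF)$ is trivially absorbing once~\ref{thm:leftcutperc:reachevery} holds, since then every left-coloring can reach $\cM$.

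For the fold-stability side, the implication~\ref{thm:leftcutperc:reach}$\Rightarrow$\ref{thm:leftcutperc:foldstable} uses Lemma~\ref{lem:foldstable}\ref{lem:foldstable:reachability} applied to $\ell\otimes c$: if $r\otimes c\preceq_\cF\cM$ via a composition that collapses $V_1(G)$ to a singleton, then for any $\ell$ the coloring $(\ell\comp(f_1)_{L_1}\comp\cdots\comp(f_n)_{L_n})\otimes c$ is of the form $\ell'\otimes c$ with $\ell'$ monochromatic, and if $\ell\otimes c$ is $(\Aut((G,c)),\cF)$-fold-stable then it is isomorphic to this monochromatic flag, forcing $\ell$ to be monochromatic. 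The implications~\ref{thm:leftcutperc:stronglyfoldstable}$\Rightarrow$\ref{thm:leftcutperc:maximal} and~\ref{thm:leftcutperc:maximal}$\Rightarrow$\ref{thm:leftcutperc:reach} proceed as in Theorem~\ref{thm:cutperc}: the first via Lemma~\ref{lem:maximal}\ref{lem:maximal:stronglyfoldstable} applied to the $\preceq_\cF$-strongly connected set of colorings reachable from $\ell\otimes c$, and the second by picking a $\preceq_\cF$-maximal element in the (finite, since $G$ is finite and $\im(\ell)\subseteq\im(r)$) set of colorings reachable from $r\otimes c$ and invoking hypothesis~\ref{thm:leftcutperc:maximal}.

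For~\ref{thm:leftcutperc:leftsymmfoldstable}$\Rightarrow$\ref{thm:leftcutperc:reach}, I pick a $\preceq_\cF$-maximal element $m=\ell_m\otimes c$ reachable from $r\otimes c$, apply Lemma~\ref{lem:maximal}\ref{lem:maximal:stronglyfoldstable} to conclude that $m$ is strongly $\cF$-fold-stable, and then invoke Lemma~\ref{lem:leftsymmfoldstable} (with the left-vertex-transitivity hypothesis of~\ref{thm:leftcutperc:leftsymmfoldstable}) to upgrade this to $\cF$-left-symmetric $(\Aut((G,c)),\cF)$-fold-stability of $m$; the hypothesis then forces $\ell_m$ to be monochromatic. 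Finally, for the joining implication~\ref{thm:leftcutperc:leftcutperc}$\land$\ref{thm:leftcutperc:foldstable}$\Rightarrow$\ref{thm:leftcutperc:leftsymmfoldstable}, the fold-stability clause transfers for free (left-symmetric fold-stability is stronger than fold-stability), and $K$-left-vertex-transitivity follows from Lemma~\ref{lem:Kleftvertextransitive}. The only mild subtlety — and the main place where care is needed — is verifying that $\cF$ remains closed under dual folds (via Lemma~\ref{lem:dualfold} applied inside $\Aut((G,c))$) and that the subgroup $K'$ from Lemma~\ref{lem:leftsymmfoldstable} can be identified with the $K$ in condition~\ref{thm:leftcutperc:leftsymmfoldstable}; beyond this bookkeeping there is no new ingredient beyond the already-proved lemmas.
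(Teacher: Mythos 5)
Your proposal follows the paper's own proof essentially verbatim: the same implication diagram as Theorem~\ref{thm:cutperc}, the same lemmas (Proposition~\ref{prop:abs}, Lemma~\ref{lem:foldstable}\ref{lem:foldstable:reachability}, Lemma~\ref{lem:maximal}\ref{lem:maximal:stronglyfoldstable}, Lemma~\ref{lem:leftsymmfoldstable}, Lemmas~\ref{lem:Kleftvertextransitive} and~\ref{lem:dualfold}), and the same use of the identity $(\ell\otimes c)\comp f_L = (\ell\comp f_L)\otimes c$ to reduce everything to the action of folding maps on $V_1(G)$. The only point the paper states explicitly that you leave implicit is the (trivial) upgrade from $(\Aut(G),\cF)$-fold-stability to $(\Aut((G,c)),\cF)$-fold-stability for colorings of the form $\ell\otimes c$ --- needed when feeding the conclusion of Lemma~\ref{lem:maximal} into items~\ref{thm:leftcutperc:stronglyfoldstable} and~\ref{thm:leftcutperc:leftsymmfoldstable} --- which follows from the same identity, since any isomorphism between such colored bigraphs necessarily preserves $c$.
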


\begin{proof}
  The proof is analogous to that of Theorem~\ref{thm:cutperc}, except for the following:
  \begin{enumerate}[label={\arabic*.}]
  \item We can trivially upgrade $\cF$-fold-stability of a coloring of the form $\ell\otimes c$ to
    $(\Aut((G,c)),\cF)$-fold-stability. This is because for every fold $(f,L)\in\cF$, we have
    $(\ell\otimes c)\comp f_L = (\ell\comp f_L)\otimes c$, so any isomorphism between
    $(G,\ell\otimes c)$ and $(G,(\ell\otimes c)\comp f_L)$ must necessarily preserve $c$.
  \item For left-symmetric fold-stability, we will use Lemma~\ref{lem:leftsymmfoldstable} instead of
    Lemma~\ref{lem:foldstable}\ref{lem:foldstable:symmfoldstable}.
  \end{enumerate}

  The structure of the proof is as follows:
  \begin{equation*}
    \begin{tikzcd}
      \text{\ref{thm:leftcutperc:leftcutperc}}
      \arrow[d]
      \arrow[r, rounded corners,%
        to path={%
          -- ([yshift={3ex}]\tikztostart.north)%
          -- ([yshift={3ex}]\tikztotarget.north)%
          -- (\tikztotarget)}]
      &
      \text{\ref{thm:leftcutperc:leftsymmfoldstable}}
      \arrow[d]
      &
      \text{\ref{thm:leftcutperc:foldstable}}
      \arrow[d, dashed]
      \arrow[l, rounded corners,%
        to path={%
          -- ([yshift={3ex}]\tikztostart.north)%
          -- ([yshift={3ex}]\tikztotarget.north)%
          -- (\tikztotarget)}]
      \\
      \text{\ref{thm:leftcutperc:reachevery}}
      \arrow[r, dashed]
      \arrow[d, shift right]
      &
      \text{\ref{thm:leftcutperc:reach}}
      \arrow[ul]
      \arrow[ur]
      &
      \text{\ref{thm:leftcutperc:stronglyfoldstable}}
      \arrow[d]
      \\
      \text{\ref{thm:leftcutperc:llevery}}
      \arrow[u, dashed, shift right]
      \arrow[r, dashed]
      &
      \text{\ref{thm:leftcutperc:ll}}
      \arrow[u, dashed]
      &
      \text{\ref{thm:leftcutperc:maximal}}
      \arrow[ul]
    \end{tikzcd}
  \end{equation*}
  In the above, dashed arrows follow trivially from definitions and the joining arrow
  means~\ref{thm:leftcutperc:leftcutperc}$\land$\ref{thm:leftcutperc:foldstable}$\implies$\ref{thm:leftcutperc:leftsymmfoldstable}.

  For the implication~\ref{thm:leftcutperc:leftcutperc}$\implies$\ref{thm:leftcutperc:reach}, since
  $G$ is $\cF$-left-cut-percolating, there exist a finite sequence $V_0,V_1,\ldots,V_n\subseteq
  V_1(G)$ and a finite sequence $(f_1,L_1),\ldots,(f_n,L_n)\in\cF$ of folds such that $\lvert
  V_0\rvert=1$, $V_n=V_1(G)$ and for every $i\in[n]$, $V_{i+1}=(f_i)_{L_i}^{-1}(V_i)$, that is, the
  composition
  \begin{equation*}
    (f_1)_{L_1}\comp\cdots (f_n)_{L_n}
  \end{equation*}
  when applied to $V_n=V_1(G)$ has image $V_0$, whose size is $1$, so
  \begin{equation*}
    \ell\comp(f_1)_{L_1}\comp\cdots (f_n)_{L_n}
  \end{equation*}
  is monochromatic, hence $\ell\otimes c\preceq_{\cF}\cM$.

  \medskip

  For the implication~\ref{thm:leftcutperc:reach}$\implies$\ref{thm:leftcutperc:leftcutperc}, since
  $r\otimes c\preceq_{\cF}\cM$ and Lemma~\ref{lem:dualfold} implies that $\cF$ is closed under dual
  folds, there exists a sequence of folds $(f_1,L_1),\ldots,(f_n,L_n)\in\cF$ such that
  \begin{equation*}
    r\comp (f_1)_{L_1}\comp\cdots\comp(f_n)_{L_n}
  \end{equation*}
  is monochromatic. Since $r$ is injective, this means that $(f_1)_{L_1}\comp\cdots\comp(f_n)_{L_n}$
  must map $V_1(G)$ to a set $V_0$ of size $1$, so setting inductively
  \begin{align*}
    V_{i+1} \df (f_i)_{L_i}^{-1}(V_i)
  \end{align*}
  gives $V_n\df V_1(G)$, hence $G$ is $\cF$-cut-percolating.

  \medskip

  For the implication~\ref{thm:leftcutperc:reachevery}$\implies$\ref{thm:leftcutperc:llevery}, by
  Proposition~\ref{prop:abs}, it suffices to show that for every $\ell\colon V_1(G)\to C$, the
  folding problem $(G,\ell\otimes c,\cM,\cF)$ is absorbing. For this, we need to show that if
  $\ell\otimes c\preceq_{\cF} \ell'\otimes c$, $(f,L)\in\cF$ and $\ell'\otimes c\preceq_{\cF}\cM$ is
  such that $\ell'\otimes c\notin\cM$, then
  \begin{align*}
    (\ell'\otimes c)\comp f_L & = (\ell'\comp f_L)\otimes c\preceq_{\cF}\cM, &
    (\ell'\otimes c)\comp f_L^* & = (\ell'\comp f_L^*)\otimes c\preceq_{\cF}\cM,
  \end{align*}
  But since every left-coloring $\ell''\colon E(G)\to C$ satisfies $\ell''\otimes
  c\preceq_{\cF}\cM$, this follows immediately.

  \medskip

  We now prove the
  implication~\ref{thm:leftcutperc:reach}$\implies$\ref{thm:leftcutperc:foldstable}. Since $r\otimes
  c\preceq_{\cF}\cM$ and $\cF$ is closed under dual folds (by Lemma~\ref{lem:dualfold}), there
  exists a sequence of folds $(f_1,L_1),\ldots,(f_n,L_n)\in\cF$ such that
  \begin{equation*}
    r\comp (f_1)_{L_1}\comp\cdots\comp(f_n)_{L_n}
  \end{equation*}
  is monochromatic. Since $r$ is injective, this means that $(f_1)_{L_1}\comp\cdots\comp(f_n)_{L_n}$
  must map $V_1(G)$ to a set $V_0$ of size $1$, so
  \begin{equation*}
    \ell'\df \ell\comp (f_1)_{L_1}\comp\cdots\comp(f_n)_{L_n}
  \end{equation*}
  must also be monochromatic. Since $\cF$ is $\Aut((G,c))$-invariant and closed under dual folds and
  $\ell\otimes c\preceq_{\cF} \ell'\otimes c$, by
  Lemma~\ref{lem:foldstable}\ref{lem:foldstable:reachability}, it follows that $(G,\ell\otimes
  c)\cong (G,\ell'\otimes c)$, so $\ell$ must also be monochromatic.

  \medskip

  For the
  implication~\ref{thm:leftcutperc:stronglyfoldstable}$\implies$\ref{thm:leftcutperc:maximal}, let
  \begin{equation*}
    \cC \df \{\ell'\colon V_1(G)\to C \mid \ell\otimes c\preceq_{\cF} \ell'\otimes c\}
  \end{equation*}
  be the set of left-colorings $\ell'$ such that $\ell'\otimes c$ is $\cF$-reachable from
  $\ell\otimes c$. Since $\ell\otimes c$ is $\preceq_{\cF}$-maximal, it follows that
  \begin{equation*}
    \cC\otimes\{c\} \df \{\ell'\otimes c \mid \ell'\in\cC\}
  \end{equation*}
  is a $\preceq_{\cF}$-strongly connected set of $\preceq_{\cF}$-maximal elements. Since $G$ is
  connected, by Lemma~\ref{lem:maximal}\ref{lem:maximal:stronglyfoldstable}, every element of
  $\cC\otimes\{c\}$, in particular also $\ell\otimes c$, is strongly
  $(\Aut((G,c)),\cF)$-fold-stable, hence monochromatic.

  \medskip

  For the implication~\ref{thm:leftcutperc:maximal}$\implies$\ref{thm:leftcutperc:reach}, let
  \begin{equation*}
    \cC \df \{\ell\colon V_1(G)\to C \mid r\otimes c\preceq_{\cF} \ell\otimes c\}
  \end{equation*}
  be the set of left-colorings $\ell$ such that $\ell\otimes c$ is $\cF$-reachable from $r\otimes
  c$. Since $G$ is finite, we know that $\cC$ is finite, so there must exist at least one $m\in\cC$
  such that $m\otimes c$ is $\preceq_{\cF}$-maximal. By assumption, $m$ must be monochromatic, so
  $r\otimes c\preceq_{\cF}\cM$.

  \medskip

  For the implication~\ref{thm:leftcutperc:leftsymmfoldstable}$\implies$\ref{thm:leftcutperc:reach}, let
  \begin{equation*}
    \cC \df \{\ell\colon V_1(G)\to C \mid r\otimes c\preceq_{\cF} \ell\otimes c\}
  \end{equation*}
  be the set of left-colorings $\ell$ such that $\ell\otimes c$ is $\cF$-reachable from $r\otimes
  c$. Since $G$ is finite, we know that $\cC$ is finite, so there must exist at least one $m\in\cC$
  such that $m\otimes c$ is $\preceq_{\cF}$-maximal. Let
  \begin{equation*}
    \cC' \df \{\ell\colon V_1(G)\to C \mid m\otimes c\preceq_{\cF} \ell\otimes c\}
  \end{equation*}
  be the set of left-colorings $\ell$ such that $\ell\otimes c$ is $\cF$-reachable from $m\otimes
  c$. Since $m\otimes c$ is $\preceq_{\cF}$-maximal, it follows that
  \begin{equation*}
    \cC'\otimes\{c\} \df \{\ell\otimes c \mid \ell\in\cC'\}
  \end{equation*}
  is a $\preceq_{\cF}$-strongly connected set of $\preceq_{\cF}$-maximal elements. Since $G$ is
  connected, by Lemma~\ref{lem:maximal}\ref{lem:maximal:stronglyfoldstable}, every element of
  $\cC'$, in particular also $m$, is strongly $(\Aut((G,c)),\cF)$-fold-stable. On the other hand,
  since $r\otimes c\preceq_{\cF} m\otimes c$ by construction, Lemma~\ref{lem:leftsymmfoldstable}
  says $m\otimes c$ is in fact symmetrically $(\Aut((G,c)),\cF)$-fold-stable, hence
  monochromatic. Thus $r\otimes c\preceq_{\cF}\cM$.

  \medskip

  Finally, we prove the joining arrow, that is, we prove
  that~\ref{thm:leftcutperc:leftcutperc}$\land$\ref{thm:leftcutperc:foldstable}$\implies$\ref{thm:leftcutperc:leftsymmfoldstable},
  it is clear that~\ref{thm:leftcutperc:foldstable} implies that every symmetrically
  $(\Aut((G,c)),\cF)$-fold-stable coloring is monochromatic. On the other hand,
  Lemma~\ref{lem:Kleftvertextransitive} says that~\ref{thm:leftcutperc:leftcutperc} implies that $G$
  is $K$-left-vertex-transitive.
\end{proof}

\appendix

\section{Upgrading connected core flag isomorphism}
\label{sec:coreiso}

\begin{proposition}
  If $F_1$ and $F_2$ are flags such that $F_1\rest_{C(F_1)}\cong F_2\rest_{C(F_2)}$ and $H(F_1)\cong
  H(F_2)$, then $F_1\cong F_2$.
\end{proposition}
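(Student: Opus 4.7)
The plan is to keep the given flag isomorphism on the connected cores and extend it, across the remaining ``non-core'' connected components, to a full colored bigraph isomorphism; together these will automatically form a flag isomorphism.

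First I would observe that $T(F_t)\subseteq C(F_t)$ by definition of the connected core, so every labeled vertex of $F_t$ already lies inside $C(F_t)$. Consequently the complement $V(F_t)\setminus C(F_t)$ carries no labels, and any colored bigraph isomorphism between $H(F_1) - C(F_1)$ and $H(F_2) - C(F_2)$ will automatically be compatible with the (empty) flag structure there.

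The main step is to produce such a non-core isomorphism. For each $t\in[2]$, let $\mathcal{A}_t$, $\mathcal{B}_t$, and $\mathcal{C}_t$ denote the multisets of isomorphism classes (as colored bigraphs) of the connected components of $H(F_t)\rest_{C(F_t)}$, $H(F_t) - C(F_t)$, and $H(F_t)$, respectively. Since the connected components of $H(F_t)$ are partitioned into core and non-core ones, $\mathcal{C}_t = \mathcal{A}_t + \mathcal{B}_t$ (multiset sum). The hypothesized flag isomorphism $F_1\rest_{C(F_1)}\cong F_2\rest_{C(F_2)}$ in particular gives a colored bigraph isomorphism between the cores, so $\mathcal{A}_1 = \mathcal{A}_2$; the hypothesized colored bigraph isomorphism $H(F_1)\cong H(F_2)$ gives $\mathcal{C}_1 = \mathcal{C}_2$. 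Cancellation in the free commutative monoid of multisets then yields $\mathcal{B}_1 = \mathcal{B}_2$, whence a colored bigraph isomorphism $\psi\colon H(F_1) - C(F_1) \to H(F_2) - C(F_2)$.

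Finally, I would glue the flag isomorphism $\phi\colon F_1\rest_{C(F_1)} \to F_2\rest_{C(F_2)}$ and $\psi$ along the partition $V(F_1) = C(F_1)\disjcup(V(F_1)\setminus C(F_1))$ into a single bijection $h\colon V(F_1)\to V(F_2)$. Since $C(F_1)$ is a union of connected components of $G(F_1)$, no edge of $G(F_1)$ crosses the partition, so $h$ is a colored bigraph isomorphism; since $\phi$ preserves the labeling and every label lies in $C(F_1)$, $h$ is in fact a flag isomorphism $F_1\to F_2$. I do not anticipate a serious obstacle: the only ingredient beyond unwinding definitions is the multiset-cancellation principle, which holds because multisets of isomorphism classes of finite colored bigraphs form a free commutative monoid.
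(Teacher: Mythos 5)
Your proposal is correct in substance but takes a genuinely different route from the paper. The paper never counts components: starting from the core isomorphism $f$ and the bigraph isomorphism $g$, it builds the non-core matching explicitly, sending each non-core component $D$ of $H(F_2)$ outside $g(\cC'_1)$ along iterates $(g\comp f^{-1})^{t_D}$ until it exits the (finite) family of core components --- essentially a finite back-and-forth in the spirit of Cantor--Schr\"oder--Bernstein, using no data beyond $f$ and $g$. You replace this with multiset cancellation (core classes plus non-core classes equal all component classes; cores and totals match, hence non-core multisets match) and then glue. Your observation that $T(F_t)\subseteq C(F_t)$ and your gluing step (no edges cross the core/non-core partition, $\phi$ carries the labeling) are exactly right.

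One caveat: the proposition is stated for arbitrary flags, not finite ones --- the paper's proof only uses that the number of \emph{core} components is at most $k_{F_1}=k_{F_2}$. So the components need not be finite colored bigraphs and there may be infinitely many of them, in which case your multisets do not lie in a free commutative monoid and cancellation is not automatic (compare $\aleph_0+1=\aleph_0+0$). The step is still salvageable for precisely the reason the paper exploits: the cancelled multiset $\mathcal{A}_1=\mathcal{A}_2$ is finite, since each core component contains a labeled vertex, and a finite cardinal cancels against arbitrary cardinals; you also invoke a harmless amount of choice to pick class-wise bijections and component-wise isomorphisms. With that adjustment your argument is complete. What the paper's explicit construction buys is that it works verbatim in the infinite setting with no extra choices; what yours buys is brevity and a transparent isolation of the combinatorial content, namely equality of the non-core component multisets.
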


\begin{proof}
  Let $f\colon C(F_1)\to C(F_2)$ be an isomorphism from $F_1\rest_{C(F_1)}$ to $F_2\rest_{C(F_2)}$, let
  $g\colon V(F_1)\to V(F_2)$ be an isomorphism from $H(F_1)$ to $H(F_2)$ and let $\cC_i$ be the set of
  connected components of $G(F_i)$ ($i\in[2]$).

  By the definition of connected core, we know that each $C\in\cC_i$ is either contained in $C(F_i)$
  or is disjoint from it. Let $\cC'_i\df\{C\in\cC_i \mid C\subseteq C(F_i)\}$ and note that since
  $g$ and $f$ are isomorphisms, it follows that $g$ induces a bijection from $\cC_1$ to $\cC_2$ and
  $f$ induces a bijection from $\cC'_1$ to $\cC'_2$.

  Note also that $\cC'_1$ and $\cC'_2$ are finite (in fact, they have cardinality at most
  $k_{F_1}=k_{F_2}$) as every element of $\cC'_i$ must contain at least one point in the image of
  $\theta_{F_i}$.

  Let
  \begin{equation*}
    \cD \df \cC_2\setminus g(\cC'_1)
  \end{equation*}
  and for each $D\in\cD$, let us define $t_D$ as the minimum $t\in\NN$ such that
  \begin{equation*}
    (g\comp f^{-1})^t(D)
    \df
    (\mathop{\underbrace{(g\comp f^{-1})\comp\cdots\comp (g\comp f^{-1})}}\limits_{t\text{ times}})(D)
    \notin\cC'_2
  \end{equation*}
  (Note that the expression inductively makes sense: if for $t-1$ the above is in $\cC'_2$, then we
  can apply $g\comp f^{-1}$ to obtain the expression for $t$.) Let us argue that this $t_D$ indeed
  exists and in fact $t_D\leq k_{F_2}$. Suppose not, then for each $t\in\{0,\ldots,k_{F_2}\}$, the
  connected component
  \begin{equation*}
    C_t\df (g\comp f^{-1})^t(D)
  \end{equation*}
  is in $\cC'_2$. Since $\lvert\cC'_2\rvert\leq k_{F_2}$, there must be repetitions among the $C_t$
  and since $g\comp f^{-1}$ is injective on the finite set $\cC'_2$, it must also be bijective on
  this set, which in particular implies that it is bijective on $\{C_t \mid
  t\in\{0,\ldots,k_{F_2}\}$, hence there must exist $t_*\in\{0,\ldots,k_{F_2}\}$ such that $(g\comp
  f^{-1})(C_{t_*}) = C_0 = D$. Since $f^{-1}(C_{t_*})\in\cC'_1$, this contradicts $D\notin
  g(\cC'_1)$.

  We now define $h\colon V(F_1)\to V(F_2)$ by
  \begin{equation*}
    h(v) \df
    \begin{dcases*}
      f(v), & if $v\in C(F_1)$,\\
      ((g\comp f^{-1})^{t_D}\comp g)(v), & if $v\in g^{-1}(D)$ for $D\in\cD$.
    \end{dcases*}
  \end{equation*}
  (Note that there is no ambiguity in the definition above as $g^{-1}(\cD)=\cC_1\setminus\cC'_1$ and
  each $v$ is in only one connected component.)

  It is clear that if $v$ and $w$ are in the same connected component of $F_1$, then they land on
  the same case in the definition of $h$. In particular, this implies that $h$ preserves edges,
  non-edges and the coloring. Since $h$ acts as $f$ on $C(F_1)$, it is also clear that $h$ preserves
  the flag labeling.

  It remains only to show that $h$ is indeed bijective. Since both $f$ and $g$ are bijective, it
  suffices to show that
  \begin{equation*}
    (f(C(F_1)), ((g\comp f^{-1})^{t_D}(D))_{D\in\cD})
  \end{equation*}
  forms a partition of $V(F_2)$. By inspecting connected components, it is clear that the above are
  pairwise disjoint, so it remains to show that their union is $V(F_2)$. Inspecting connected
  components again, it suffices to show that
  \begin{equation*}
    \cC'_2\cup\bigcup_{D\in\cD} (g\comp f^{-1})^{t_D}(D) = \cC_2,
  \end{equation*}
  which in turn reduces to showing that if $C\in\cC_2\setminus\cC'_2$, then there exists $D\in\cD$
  with $(g\comp f^{-1})^{t_D}(D)=C$.

  Fix one such $C\in\cC_2\setminus\cC'_2$ and let $u_C\in\NN$ be the minimum $u$ such that
  \begin{equation*}
    (f\comp g^{-1})^u(C)\in\cD
  \end{equation*}
  (Note that the expression above inductively makes sense: if for $u-1$ the above is not in $\cD$,
  then it must be in $g(\cC'_1)$, so we can apply $f\comp g^{-1}$ to it to obtain the expression for
  $u$.) A similar argument to that of $t_D$ shows that $u_C$ indeed exists and $u_C\leq k_{F_1}$.

  Let now $D\df (f\comp g^{-1})^{u_C}(C)\in\cD$ and let us show that $(g\comp
  f^{-1})^{t_D}(D)=C$. To do so, it suffices to show that $u_C=t_D$. Since $(g\comp
  f^{-1})^{u_C}(D)=C\notin\cC'_2$, it is clear that $t_D\leq u_C$.

  To see the other inequality, note that the definition of $t_D$ implies that
  \begin{equation*}
    \cC_2\setminus\cC'_2
    \ni
    (g\comp f^{-1})^{t_D}(D)
    =
    (f\comp g^{-1})^{u_C-t_D}(C)
  \end{equation*}
  and since the image of $f$ is contained in $\cC'_2$, we must have $u_c-t_D=0$.

  Thus $h$ is an isomorphism from $F_1$ to $F_2$.
\end{proof}

\bibliographystyle{alpha}
\bibliography{refs}

\end{document}